\def\namedlabel#1#2{\begingroup
    #2%
    \def\@currentlabel{#2}%
    \label{#1}\endgroup
}
\newtheorem{Thm}{Theorem}[section]
\newtheorem{Prop}[Thm]{Proposition}
\newtheorem{Lem}[Thm]{Lemma}
\newtheorem{Cor}[Thm]{Corollary}
\newtheorem{Ques}[Thm]{Question}
\newtheorem*{Thm*}{Theorem}
\theoremstyle{definition}
\newtheorem{Def}[Thm]{Definition}}
\theoremstyle{definition}
\newtheorem{Rem}[Thm]{Remark}}
\theoremstyle{definition}
\newtheorem{Ex}[Thm]{Example}}
\newenvironment{Proof}{\rm \trivlist\item[\hskip \labelsep{\bf
Proof.\quad}]}{\hfill\qed\par\medskip\endtrivlist}
\newcommand{\id}{\mathrm{id}}
\newcommand{\supp}{\operatorname{supp}}
\newcommand{\ol}{\overline}
\renewcommand{\hat}{\widehat}
\renewcommand{\epsilon}{\varepsilon}
\newcommand{\Inv}{\operatorname{Inv}}
\newcommand{\Gp}{\operatorname{Gp}}
\newcommand{\FIM}{\operatorname{FIM}}
\newcommand{\FG}{\operatorname{FG}}
\newcommand{\Lin}{\operatorname{Lin}}
\newcommand{\m}{\mathfrak m}
\newcommand{\Sg}{S\Gamma}
\newcommand{\lrel}{\mathrel{\mathcal{L}}}
\newcommand{\rrel}{\mathrel{\mathcal{R}}}
\numberwithin{equation}{section}
\begin{document}
\title[The large scale geometry of inverse semigroups]{The large scale geometry of inverse semigroups and their maximal group images}

%

\maketitle

\begin{center}

\small
MARK KAMBITES\footnote{\textit{Email address:} \texttt{Mark.Kambites@manchester.ac.uk}. This author's research was supported by the Engineering and Physical Sciences Research Council
[grant number EP/Y008626/1].} AND
N\'{O}RA SZAK\'{A}CS\footnote{\textit{Email address:} \texttt{Nora.Szakacs@manchester.ac.uk}.}

\medskip

\textit{Department of Mathematics, University of Manchester, \\
Manchester M13 9PL, UK.}

\end{center}

\begin{abstract}
The geometry of inverse semigroups is a natural topic of study, motivated both from within semigroup theory
and by applications to the theory of non-commutative $C^*$-algebras. We study the relationship between the
geometry of an inverse semigroup and that of its maximal group image, and in particular the geometric \textit{distortion} of the natural map from the former to the latter. This turns out to have both implications for semigroup theory and potential relevance for operator algebras associated to inverse semigroups. Along the way, we also answer a question of
Lled\'{o} and Mart\'{i}nez by providing a more direct proof that an $E$-unitary inverse semigroup has Yu's Property A if its maximal group image does.
\end{abstract}

\section{Introduction}

Inverse semigroups are von Neumann regular semigroups in which idempotents commute; they form the natural algebraic model of partial symmetry, and as such arise naturally in many parts of mathematics and physics \cite{Law}. A major source of modern applications is in operator algebras, where constructing $C^*$-algebras from inverse semigroups is a major theme. 

The \emph{geometry} of inverse monoids and semigroups has been a topic of much recent interest \cite{Finn, GSSz, LM21, DMthesis, CMSz, R23} both within semigroup theory and in functional analysis.
The
approaches coming from these two directions both focus on the geometric structure of \textit{Sch\"utzenberger graphs} (the strongly connected components of Cayley graphs), but are in other respects quite different in flavour. Approaches motivated from within semigroup theory tend to focus on finitely generated semigroups, with the Sch\"utzenberger graphs considered either as purely combinatorial objects or equipped with a \textit{word metric} (sometimes considered up to \textit{quasi-isometry}). Notable recent results in this area include Gray's celebrated proof of the undecidability of the word problem for special inverse monoids \cite{Bob}, and Gray and the first author's characterisation of the maximal subgroups of finitely presented special inverse monoids as precisely the recursively presented groups \cite{GK}.

For recent analytically-motivated applications, the Sch\"utzenberger graphs have been studied up to \emph{coarse equivalence}, in other words, up to bounded distortion of the metric, at which level much of the detail essential to the semigroup-theoretic approaches is not visible. In this setting a finite generation assumption is not usually required, with most results holding for countable or even quasi-countable inverse semigroups (those generated by their idempotents plus a countable set of other elements). 
In particular,
Chung, Mart\'inez and the second author  \cite{CMSz} have shown that quasi-countable inverse semigroups can be equipped with what we will here term a \textit{standard metric} (a left subinvariant, proper metric in their terminology),
in a way which is unique up to coarse equivalence, extending the well-known analogous result about countable groups \cite{DS}.
Further recent developments in this direction include work of authors including Chung, Lled\'{o}, Mart\'{i}nez and the second author \cite{LM21, CMSz}, establishing correspondences between algebraic and coarse geometric properties of inverse semigroups, and amenability- and finiteness-type properties of an associated $C^*$-algebra (the \textit{uniform Roe algebra}). 

A recurring idea in both approaches is the relationship between the geometry of the inverse semigroup and that of
its \textit{maximal group image} (the unique maximal group onto which it homomorphically maps). The natural homomorphism from an inverse semigroup to its maximal group image induces a graph
morphism from each Sch\"utzenberger graph of the former to the Cayley graph of the latter; in the important
special case of \textit{$E$-unitary} inverse monoids, these morphisms are actually embeddings. In general, even in the $E$-unitary case, these morphisms are very far from being isometries, with distances being subject to arbitrary high \textit{distortion}. 
It turns out that this distortion is important from both a semigroup-theoretic and an analytic point of view, and our
aim in this paper is to initiate the detailed study of it. In particular we look at conditions under which the
distortion is, or is not, bounded in various ways, and consequences which can be deduced from different kinds of bounds on the distortion.


If the distortion is uniformly bounded across all Sch\"utzenberger graphs, then certain geometric properties which are \textit{coarse invariant} and inherited by subspaces (such as for example Yu's \textit{Property A}) can be lifted from the maximal group image to the inverse semigroup. For the special class of \textit{$E$-unitary} inverse monoids, it was shown in \cite{LM21} that Property A lifts in this way even without assuming any bound on the distortion. The proof given in \cite{LM21} was rather indirect, proceeding via $C^*$-algebras; having briefly recapped relevant background in Section~\ref{sec:geometry}, in Section~\ref{sec:propA} we digress slightly from our main purpose to answer a question posed in \cite{LM21} by giving an elementary direct proof of this fact. In fact we establish the stronger statement that Property A lifts through any inverse semigroup morphism which is  uniformly bounded-to-one when restricted to $\mathcal{R}$-classes.

We begin Section~\ref{sec:sigma} by giving examples to show just how far the distortion can be from bounded, even for
the extraordinarily restricted class of one-relator $E$-unitary special inverse monoids. We then give an algebraic characterisation of those inverse semigroups where the distortion of the map to the maximal group image is uniformly bounded, in terms of a weak version of the \textit{$F$-inverse condition} which in its stronger form is widely studied in inverse semigroup theory. Indeed for the important class of \textit{special inverse monoids} (those admitting inverse monoid presentations where all relations take the form $w=1$), we obtain an even stronger result: here the distortion is uniformly bounded exactly if the monoid is $F$-inverse. We also consider some geometric conditions on the Sch\"utzenberger graph of $1$ which suffice to ensure that an inverse monoid is $F$-inverse.

In Section~\ref{sec:computability} we turn our attention to \textit{decidability} questions in (typically finitely generated) inverse
semigroups, and their relationship to distortion. One would not expect bounded group distortion, in the sense considered above, to have much consequence for the existence of algorithms, since (as we demonstrate with an example) distortion may still not be bounded by any recursive function. However, it is natural to ask if there are algorithmic implications to the existence of a \textit{recursive} bound on group distortion. We obtain effective versions of some of the results from Section~\ref{sec:sigma}, for example showing that an effective version of the $F$-inverse property implies a recursive bound on group distortion. A noted result of Ivanov, Margolis and Meakin \cite{IMM} states that an $E$-unitary special inverse monoid has decidable word problem provided that the maximal group image has decidable word problem and decidable \textit{prefix membership problem}; we show that in the case of recursively bounded group distortion the latter is also equivalent to decidability of the word problem in the maximal group image and of whether an element of the monoid is a right unit. (We are grateful to John Meakin, who in private communication showed us a special case of this result and in doing so suggested the idea behind the proof of the general case.) However, we use an example of Gray \cite{Bob} to show that computable group distortion does not imply decidability of the word problem (even for $F$-inverse one-relator special inverse monoids whose group images have decidable word problem). We conclude with a number
of open questions in this area.

\section{Inverse Semigroups and Their Geometry}\label{sec:geometry}

In this section we briefly recap some basic facts about inverse semigroups and their geometry. Recall that
an \textit{inverse semigroup} is a semigroup such that for every element $s$ there exists a unique element
$s^{-1}$ such that $ss^{-1}s = s$ and $s^{-1} s s^{-1} = s^{-1}$; equivalently, it is a von Neumann regular semigroup
with commuting idempotents. An \textit{inverse monoid} is an inverse semigroup with an identity element. We denote the set of idempotents in an inverse semigroup $S$ by $E(S)$. The idempotents form a (meet) semilattice with respect to the semigroup operation, and in fact the corresponding  partial order extends to $S$ by putting $s \leq t$ when $s=ts^{-1}s$, or equivalently, if there exists some $e \in E(S)$ such that $s=te$; this is called the \emph{natural partial order} on $S$. The relation is compatible with both products and taking inverses.

We say that an inverse semigroup $S$ is \emph{generated by $X$}, in notation, $S=\langle X \rangle$, if there is a map $\iota \colon X \to S$ such that $S$ is generated (under the multiplication and inverse operations) by $\iota (X)$. We do not assume $\iota$ to be an embedding, nevertheless we will not make the map $\iota$ explicit in notation. We say that $S$ is \emph{quasi-generated by $X$} if $S$ is generated by $\iota (X)$ together with $E(S)$, in notation, $S =\langle X \cup E(S) \rangle$. We say $S$ is \emph{quasi-countable} if it is quasi-generated by a countable set. An \emph{inverse monoid} is an inverse semigroup that has an identity element. When we refer to (quasi-)generating sets for inverse monoids, we mean sets that (quasi-)generate $S$ as an inverse monoid.

Given an inverse semigroup $S$ generated by $X$, we define the Cayley graph of $S$ as usual: the vertex set is $S$, and the edges are of the form $s \xrightarrow{x} sx$ for $s \in S, x \in X\cup X^{-1}$. The strongly connected component of the Cayley graph containing the vertex $s$ is called the \emph{Sch\"utzenberger graph} of $s$, and is denoted by $S\Gamma(s)$. We define an equivalence relation on $S$ by $s \rrel t$ if and only if $s$ and $t$ lie in the same
Sch\"utzenberger graph, or equivalently, if and only if $sS=tS$. (The relation $\rrel$ is one of the five equivalence relations defined by Green which play a central role most aspects of semigroup theory \cite{Howie}, but in this article we shall have no need for the other relations.)
It is an important fact that $sx \rrel s$ implies $sxx^{-1}=s$, which means that the edges of Sch\"utzenberger graphs occur in inverse pairs: there is an edge from $u$ to $v$ labelled $x$ if and only if there is an edge from $v$ to $u$ labelled $x^{-1}$. It follows that the path metric on these graphs defines a metric on each $\rrel$-class. Another useful observation is that $s \rrel ss^{-1}$ for every $s \in S$. Moreover, the $\rrel$-relation can be characterised by $s \rrel t$ if and only if $ss^{-1}=tt^{-1}$. It follows that each $\rrel$-class has a unique idempotent.

Every inverse semigroup has a \emph{maximal group image}, which can be defined as the quotient by the least congruence which identifies all the idempotents. This congruence is typically denoted by $\sigma$, and we also denote the natural homomorphism from the inverse semigroup to the maximal group image by $\sigma$. Since groups have a well-developed geometric theory, a recurring idea in geometric inverse semigroup theory \cite{GSSz, LM21} is to understand when large scale geometric properties of the maximal group image lift to the inverse semigroup. In complete generality, there is not much one can expect -- for example if the inverse semigroup has a $0$, the maximal group image is trivial. An inverse monoid is called \emph{$E$-unitary} if the map $\sigma\colon S \to S/\sigma$ is injective on each $\rrel$-class. In this case, $\sigma$ induces an embedding of each Sch\"utzenberger graph $S\Gamma(s)$ into the Cayley graph of the maximal group image, taken with respect to the same generating set $X$. An equivalent characterisation of being $E$-unitary is $\sigma$ being \emph{idempotent pure}, that is, having the property that $\sigma^{-1}(1)=E(S)$. 

We briefly recall some other definitions from inverse semigroup theory which we shall need later. An inverse semigroup is called \textit{$F$-inverse} if every $\sigma$-class contains a maximum element under the natural partial order; in fact it can easily be shown that $F$-inverse semigroups are necessarily $E$-unitary inverse monoids. 
A map $\tau : S \to T$ between inverse monoids $S$ and $T$ is called a \textit{premorphism} if $\tau(1_S) = 1_T$, and for all $s, t \in S$ we have $\tau(s^{-1}) = \tau(s)^{-1}$ and
$\tau(s)\tau(t) \leq \tau(st)$, where $\leq$ denotes the natural partial order on $T$.

We will consider all finitely generated (and more generally all quasi-countable) inverse semigroups to be equipped with a natural \emph{extended} metric. An \textit{extended metric} on a set $X$ is a function $d \colon X \times X  \to [0, \infty]$ satisfying the usual properties of the metric with respect to the usual addition and partial order on $[0, \infty]$; an \textit{extended metric space} is a set equipped with an extended metric. We can define an equivalence relation on any extended metric space by putting $x \sim y$ if $d(x,y) < \infty$; the corresponding equivalence classes are called the \textit{components} of the space. We denote the ball of radius $r$ around the point $x$ by $B(x,r)$.  An extended metric space is called \textit{uniformly discrete} if there is a positive lower bound on the distance between pairs of distinct points, and has \textit{bounded geometry} if for every $r > 0 $ there is a finite upper bound on the cardinality of balls of radius $r$.

A map $f: (X,d_X) \to (Y,d_Y)$ between (extended) metric spaces is called 
 \textit{bornologous} if there is a monotonically non-decreasing function $\rho_+: [0, \infty] \to [0,\infty]$ with $\rho_+^{-1}(\infty)=\infty$ such that
$$d_Y(f(x), f(z)) \leq \rho_+(d_X(x,z))$$ for all $x,z \in X$. The map $f$ is a
\textit{coarse embedding} if there is also a monotonically non-decreasing function $\rho_- : [0, \infty] \to [0,\infty]$ with $\rho_-(r) \to\infty$ as $r \to \infty$
such that
$$ \rho_-(d_X(x,z))\leq d_Y(f(x), f(z)) \leq \rho_+(d_X(x,z))$$
for all $x,z \in X$.
A coarse embedding is called a \textit{coarse equivalence} if there exists a constant $K$ such that $d_Y(y, f(X)) \leq K$ for all $y \in Y$. Two extended metric spaces are said to be \textit{coarse equivalent} if there exists a coarse equivalence between them; this indeed defines an equivalence relation on the class of extended metric spaces. Two extended metrics on the same set are \textit{coarse equivalent} if the identity map on the set is
a coarse equivalence between the two metric space structures (which is a stronger condition than the metric spaces merely
being coarse equivalent by an unspecified map.)

\begin{Def}[standard metric on an inverse semigroup]
	\label{def:metric}
	Let $S$ be a quasi-countable inverse semigroup, and let $d \colon S \times S \rightarrow [0, \infty]$ be a uniformly discrete extended metric whose components are the $\rrel$-classes. Following \cite{CMSz}\footnote{Actually \cite{CMSz} considers right subinvariant metrics (with $\lrel$-classes as components), following the convention in operator algebras. We work with the dual, following the convention in geometric group theory.}, we say that $d$ is
	\begin{enumerate}
		\item  \textit{left subinvariant} if $d(xs, xt) \leq d(s, t)$ for every $s, t, x \in S$; and
		\item  \textit{proper} if for every $r \geq 0$ there is some finite set $F \subseteq S$ such that $y \in xF$ for every pair $x, y \in S$ with $x \neq y$ and $d(x, y) \leq r$.
	\end{enumerate}
        We define a \textit{standard metric} on $S$ to be an extended metric which is uniformly discrete, left subinvariant, proper and whose components are the $\rrel$-classes of $S$.
\end{Def}

Standard metrics are what in \cite{CMSz} are termed \textit{left subinvariant proper metrics}.
We emphasise for the avoidance of confusion that a standard metric is (except in the case that $S$ is a group) an extended metric rather than a true metric.
When $S$ is a group, quasi-countability coincides with countability, there is only one $\rrel$-class, properness coincides with bounded geometry, and left subinvariance coincides with left invariance, and so a standard metric in our sense is exactly a left invariant metric of bounded geometry on a countable group. A key result of \cite{CMSz} shows that every quasi-countable inverse semigroup can be equipped with a standard metric, which is unique up to coarse equivalence. This means that coarse invariant properties of any standard metric are invariants of the semigroup. Where we are interested only in coarse invariant properties, we may therefore sometimes
speak of \textit{the} standard metric on an inverse semigroup.

A canonical example of a standard metric is the \emph{word metric} on a finitely (quasi-)generated inverse semigroup. More precisely, if $S=\langle X \cup E(S)\rangle$ where $X$ is a finite set, then one can define the distance between two $\rrel$-related elements $s$ and $t$ to be their distance in their Sch\"utzenberger graph with respect to the path metric, while the distance between elements which are not $\rrel$-related is defined to be $\infty$. More generally, if $X$ is countable, one can instead consider a \emph{weighted word metric}, corresponding to a function $w \colon X \to \mathbb{N}$ where $w^{-1}(n)$ is finite for all $n \in \mathbb{N}$. In this setting, instead of simply taking the path metric in the Sch\"utzenberger graphs (which would not result in a proper metric), we take the length of the edges $s \xrightarrow{x} sx$ and $sx \xrightarrow{x^{-1}} s$ to be $w(x)$. More precisely, for each pair of $\rrel$-related elements $s,t \in S$, we obtain the distance
$$d_S(s,t)=\min\{w(x_1)+w(x_2)+\ldots +w(x_n) \mid sx_1^{\epsilon_1} \cdots x_n^{\epsilon_n}=t, x_i \in X, \epsilon_i=\pm1\}.$$

As for groups \cite{NY12}, standard metrics in inverse semigroups correspond to \textit{proper length functions} \cite[Definition 3.13]{CMSz}, and these are often more convenient to work with:
\begin{Def}[proper length function on an inverse semigroup]  \label{def:length-function}
	Let $S$ be a quasi-countable inverse semigroup, and let $l\colon S \to [0, \infty)$ be a function. We say that $l$ is a \textit{length function} if $\inf_{s\in S\setminus E} l(s)>0$, and for every $s,t \in S$, the following hold:
	\begin{enumerate}
		\item \label{item:length-zero} $l(s)=0$ if and only if $s \in E$;
		\item \label{item:length-star} $l(s)=l(s^{-1})$; and
		\item \label{item:length-product} $l(st) \leq l(s)+l(t)$.
	\end{enumerate}
	Moreover, we say that $l$ is \textit{proper} if for any $r \in \mathbb R^+$, the set $C_r=\{s \in S: 0< l(s) \leq r\}$ is bounded above by a finite set in the natural partial order, that is, if $C_r \subseteq FE$ for some finite subset $F \subseteq S$.
\end{Def}

It follows easily from (\ref{item:length-zero}) and (\ref{item:length-product}) above that if $s \leq t$ in the natural partial order then $l(s) \leq l(t)$; we shall make use of this fact without further comment.

It was shown in \cite[Proposition 3.15]{CMSz} that there is a one-to-one correspondence between proper length
functions and standard metrics. Indeed, if $d$ is a standard metric on $S$, then 
	$$l(s)=d(ss^{-1},s)$$
	defines a proper length function on $S$. Conversely, if $l \colon S \to [0, \infty)$ is a proper length function on $S$, then 
	$$d(s,t) =
	\left\{\begin{array}{rl}
		l(s^{-1}t)&\hbox{if } s \rrel t,\\
		\infty&\hbox{otherwise}
	\end{array}\right.$$
	is a standard metric on $S$.

The importance of Sch\"utzenberger graphs in the theory of inverse semigroups goes back to the work of Stephen, who developed a method to study the word problem in finitely presented inverse semigroups via their Sch\"utzenberger graphs. We give a very brief primer on Stephen's work \cite{S90, Sthesis}, restricting to the inverse monoid case.

Given a set $X$, an \emph{$X$-graph} is a directed graph edge-labelled by the set $X \cup X^{-1}$ where each edge labelled by $x \in X$ and has an opposite, inverse edge with label $x^{-1}$, and vice versa. In particular, both group Cayley graphs and inverse semigroup Sch\"utzenberger graphs are $X$-graphs, where $X$ is the set of generators. 

An $X$-graph is called \emph{deterministic} if no two edges with the same label have the same initial vertex. Given a not necessarily deterministic, finite $X$-graph, we can always obtain a deterministic quotient by iteratively identifying any pair of edges (as well as their respective inverse pairs) which share the same label and the same initial vertex. We call this process \emph{determinisation}. Sch\"utzenberger graphs are of course always deterministic.

Given an inverse monoid $S$ generated by $X$ and a word $w \in (X \cup X^{-1})^\ast$ we write $[w]_S$ for the element of $S$ defined by $w$. We define the Sch\"utzenberger graph of $w$ to be $S\Gamma([w]_S)$, and sometimes denote it by $S\Gamma(w)$. This is viewed as a birooted graph with marked vertices $\alpha=[ww^{-1}]_S$ and $\beta=[w]_S$.
A key result of Stephen shows that $u$ labels an $\alpha \to \beta$ path in $S\Gamma(w)$ if and only if $[u]_S \geq [w]_S$.

An \emph{inverse monoid presentation} is of the form $\Inv\langle X | R \rangle$ where $X$ is the set of generators and $R$ is a set of relations $u_i =v_i\ (i \in I)$ for some $u_i, v_i \in (X \cup X^{-1})^\ast$. For an inverse semigroup $S$ given by such a presentation and a word $w \in X\cup X^{-1}$, Stephen described a (potentially non-terminating) procedure to build the Sch\"utzenberger graph of $w$ as an inverse limit of finite graphs, which is as follows:

\begin{itemize}
	\item Let $\Lin (w)$ be the linear graph of $w$, i.e. the birooted $X$-graph consisting of a single directed path labelled by $w$ from $\alpha$ to $\beta$. Put $\Gamma_0=\Lin (w)$.
	\item Perform a \emph{$P$-expansion}: if there is a path in $\Gamma_i$ labelled by one side of a relation such that there is no parallel path in $\Gamma_i$ labelled by the other side of that relation, sew on the missing path. More precisely, if $s=t$ is a relation such that there is a path from $u$ to $v$ labelled by $s$ but no path from $u$ to $v$ labelled by $t$, define $\Gamma_i^+$ by gluing on a disjoint copy of $\Lin(t)$ to $\Gamma_i$ from $u$ to $v$. 
	
	If there are no $P$-expansions to be performed on $\Gamma_i$, the procedure terminates.
	\item Determinise $\Gamma_i^+$ to obtain $\Gamma_{i+1}$.
\end{itemize}
Stephen showed that these steps are confluent, and that the limit in a suitable sense is $S\Gamma(w)$.

We call an $X$-graph \emph{$P$-complete} if no $P$-expansions can be performed on it. By construction (or indeed by definition), any Sch\"utzenberger graph is $P$-complete. 

When the inverse monoid is $E$-unitary, and generated by the same set $X$ as its maximal group image, $\sigma$ embeds the Sch\"utzenberger graphs of $S$ into the group Cayley graph, which can be conveniently characterised as subgraphs here. The
following lemma is straightforward to prove:
\begin{Lem}
	\label{eunitaryschutz}
	Let $S$ be an $E$-unitary inverse monoid generated by a set $X$, and $w \in (X \cup X^{-1})^\ast$. Let $G$ be the maximal group image of $S$, and
	$\Gamma(G)$ the Cayley graph of $G$ with respect to the generating set $X$. Then the Sch\"utzenberger graph $S\Gamma(w)$ is isomorphic to the smallest subgraph $\Omega$ of $\Gamma$ (under containment)   such that:
	\begin{itemize}
		\item[(i)] $\Omega$ contains the path labelled $w$ starting at the identity; and 
		\item[(ii)] for each defining relation $u_i = v_i$ and each pair $p,q$ of vertices, $\Omega$ contains a path labelled $u_i$ from $p$ to $q$ if and only if
		it contains a path labelled $v_i$ from $p$ to $q$.
	\end{itemize}
\end{Lem}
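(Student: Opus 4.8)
The plan is to realise $S\Gamma(w)$ as a subgraph of $\Gamma=\Gamma(G)$ via the homomorphism $\sigma$, and then identify that subgraph with the minimal one described by (i) and (ii). I would first record two preliminary points. Since $G=S/\sigma$ is a homomorphic image of $S$ generated by the image of $X$, the Cayley graph $\Gamma$ is a deterministic $X$-graph and every defining relation $u_i=v_i$ holds in $G$, so $[u_i]_G=[v_i]_G$; consequently, for any vertex $p$ of $\Gamma$, a $u_i$-labelled path from $p$ exists exactly when a $v_i$-labelled path from $p$ does, and the two have the same endpoint. Using determinism of $\Gamma$, the intersection of any family of subgraphs satisfying (i) and (ii) again satisfies (i) and (ii): for (ii), a $v_i$-labelled path from $p$ to $q$ in $\Gamma$ is \emph{unique}, so if it lies in every member of the family it lies in the intersection. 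As $\Gamma$ itself satisfies (i) and (ii), the subgraph $\Omega$ in the statement is well-defined.

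Because $S$ is $E$-unitary, $\sigma$ restricts to a label-preserving embedding of the $X$-graph $S\Gamma(w)$ into $\Gamma$ (injective on vertices since $\sigma$ is injective on $\mathcal{R}$-classes, hence injective on edges by determinism), carrying the roots $\alpha=[ww^{-1}]_S$ and $\beta=[w]_S$ to $1_G$ and $[w]_G$. Write $\Sigma$ for its image. I would then verify that $\Sigma$ satisfies (i) and (ii). Property (i) holds because $S\Gamma(w)$ contains a path from $\alpha$ to $\beta$ labelled $w$ (present in $\Gamma_0=\Lin(w)$ and preserved by all subsequent $P$-expansions and determinisations, or directly from Stephen's theorem applied to $u=w$). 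Property (ii) is precisely the $P$-completeness of the Sch\"utzenberger graph, transported across the embedding: a $u_i$-path in $\Sigma$ pulls back to a $u_i$-path in $S\Gamma(w)$, which by $P$-completeness is paralleled by a $v_i$-path, whose image is the required $v_i$-path in $\Sigma$, and symmetrically. By minimality of $\Omega$ this yields $\Omega\subseteq\Sigma$.

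For the reverse inclusion I would run Stephen's construction $\Gamma_0,\Gamma_1,\dots$ of $S\Gamma(w)$ while tracking images in $\Gamma$. At each stage there is a canonical label-preserving graph morphism $\phi_i\colon\Gamma_i\to\Gamma$: $\phi_0$ sends $\Lin(w)$ to the path labelled $w$ from $1_G$; a $P$-expansion, sewing on a path labelled by one side of a relation between two vertices $u,v$ already joined by a path labelled the other side, is extended over by reading the new label from $\phi_i(u)$, which is consistent because $[u_i]_G=[v_i]_G$ forces the new path to terminate at $\phi_i(v)$; and $\phi_i^+$ descends through determinisation, since any two edges identified there have the same source and label and hence (determinism of $\Gamma$) the same $\phi$-image. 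A label-preserving morphism from a connected graph to a deterministic graph is determined by one vertex value, so $\phi_i$ factors as $\Gamma_i\to S\Gamma(w)\xrightarrow{\sigma}\Gamma$, giving $\Sigma=\bigcup_i\phi_i(\Gamma_i)$. An induction now shows $\phi_i(\Gamma_i)\subseteq\Omega$: the base case is (i), and in the inductive step the $P$-expansion adds a path labelled $v_i$ (say) between vertices $\phi_i(u),\phi_i(v)$ that are already joined in $\phi_i(\Gamma_i)\subseteq\Omega$ by a path labelled $u_i$, so (ii) forces this $v_i$-path into $\Omega$ as well. Passing to the limit gives $\Sigma\subseteq\Omega$, hence $\Sigma=\Omega$ and $S\Gamma(w)\cong\Omega$.

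I expect the inclusion $\Sigma\subseteq\Omega$ to be the main obstacle: it is the part that genuinely needs Stephen's iterative construction rather than soft properties of $P$-complete graphs, and the point to get right is that mapping into the \emph{deterministic} group Cayley graph automatically performs the determinisation steps, so that the effect of each $P$-expansion on the image is exactly the operation condition (ii) is designed to be closed under. The remaining ingredients — well-definedness of $\Omega$, the embedding $\sigma|_{S\Gamma(w)}$, and $P$-completeness of $S\Gamma(w)$ — are either recalled above or routine transfers across the embedding.
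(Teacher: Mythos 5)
The paper does not actually prove this lemma --- it is introduced only with the remark that it ``is straightforward to prove'' --- so there is no written argument to compare against. Your proof is correct and is the argument the authors evidently have in mind: $E$-unitarity gives the labelled-graph embedding of $S\Gamma(w)$ into $\Gamma(G)$, $P$-completeness of Sch\"utzenberger graphs gives $\Omega\subseteq\Sigma$, and tracking Stephen's iteration through the deterministic Cayley graph gives $\Sigma\subseteq\Omega$. The point you single out --- that determinism of $\Gamma$ makes the determinisation steps invisible in the image, so each $P$-expansion acts on the image exactly as the closure operation in condition (ii) --- is precisely the observation that makes the induction go through.
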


An inverse monoid presentation is called \emph{special} if all relations are of the form $w=1$ for some $w \in (X \cup X^{-1})^*$. A \emph{special inverse monoid} is one that admits a special presentation. Special inverse monoids received considerable attention over recent years \cite{DG, DG2, Bob, GK, GR}, particularly the word problem, which is inherently tied to properties of the Sch\"utzenberger graphs. In special inverse monoids, $P$-expansions always attach a loop labelled by a relator $w$ around a vertex. It follows that for every $u \in (X \cup X^{-1})^\ast$, the Sch\"utzenberger graph $S\Gamma(u)$ is the graph obtained from $\Lin (u)$ by sewing on the Sch\"utzenberger graph of $1$ at every vertex, and then determinizing.

\section{Property A in $E$-unitary inverse semigroups}
\label{sec:propA}

In this section we digress slightly from our main theme of studying distortion, in order to give a  direct proof of the converse part of \cite[Theorem 4.25]{LM21}, that is, that an $E$-unitary  inverse semigroup must have Property A if its maximal group image has Property A. We in fact show a more general statement, namely that Property A lifts under any inverse semigroup homomorphism which is uniformly bounded-to-one across all $\rrel$-classes.

The key ingredient is showing that Property A lifts under \textit{weak contractions} that are bounded-to-one. 
A weak contraction between (extended) metric spaces $(X,d_X)$ and $(Y, d_Y)$ is a map $f : X \to Y$ such that $d_Y(f(x), f(y)) \leq d_X(x,y)$ for all $x, y \in X$. Notice that weak contractions are, by definition, bornologous.
We shall need the following elementary observation (which is inspired by \cite[Lemma 5.10]{CMSz}):
\begin{Prop}\label{prop:metricdescends}
Let $S$ and $T$ be quasi-countable inverse semigroups, and let $\rho : S \to T$ be a homomorphism. Then 
\begin{enumerate}
\item \label{item:metricdescends1} there exist standard
metrics on $S$ and $T$ with respect to which $\rho$ is a weak contraction; and
\item \label{item:metricdescends2} $\rho$ is bornologous with respect to every choice of standard metrics on $S$ and $T$.
\end{enumerate}
\end{Prop}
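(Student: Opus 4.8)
The plan is to prove \textup{(\ref{item:metricdescends1})} by an explicit construction of compatible weighted word metrics, and then to bootstrap \textup{(\ref{item:metricdescends2})} from it using the coarse uniqueness of the standard metric. Throughout I would use the elementary observation that a semigroup homomorphism $\rho$ of inverse semigroups preserves $\rrel$: since $s \rrel t$ is equivalent to $ss^{-1}=tt^{-1}$ and $\rho$ respects this identity and commutes with inversion, $s \rrel t$ in $S$ implies $\rho(s)\rrel\rho(t)$ in $T$. Moreover, for \emph{any} standard metric $d_S$ on $S$ one has $d_S(s,t)=\infty$ whenever $s\not\rrel t$, so the defining inequalities for weak contraction and for bornologousness are automatic off the $\rrel$-relation; it therefore suffices in both parts to bound $d_T(\rho(s),\rho(t))$ in terms of $d_S(s,t)$ for $\rrel$-related $s,t$.

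For \textup{(\ref{item:metricdescends1})} I would fix a countable set $X$ quasi-generating $S$ (via some map $\iota$) and a countable set $Y$ quasi-generating $T$, and take $Z=X\sqcup Y$, which quasi-generates $T$ via the map sending the copy of $X$ through $\rho\circ\iota$ and the copy of $Y$ by its given map (it quasi-generates because $Y$ already does). I would pick a weight function $w\colon Z\to\mathbb N$ with all fibres finite (possible since $Z$ is countable), set $w_X=w|_X$, and equip $S$ with the weighted word metric for $(X,w_X)$ and $T$ with the weighted word metric for $(Z,w)$; by \cite{CMSz} (see the discussion of weighted word metrics in Section~\ref{sec:geometry}) both are standard metrics. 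To verify $\rho$ is a weak contraction, take $s\rrel t$ in $S$ and a word $x_1^{\epsilon_1}\cdots x_n^{\epsilon_n}$ over $X$ realising $d_S(s,t)$; applying $\rho$ and using that the weight function on $T$ restricts on the copy of $X$ to $w_X$, this becomes a word over $Z$ connecting $\rho(s)$ to $\rho(t)$ of the same total weight, whence $d_T(\rho(s),\rho(t))\le d_S(s,t)$.

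For \textup{(\ref{item:metricdescends2})}, let $d_S,d_T$ be arbitrary standard metrics on $S,T$ and let $d_S^{\ast},d_T^{\ast}$ be the metrics constructed above, so $d_T^{\ast}(\rho(s),\rho(t))\le d_S^{\ast}(s,t)$. Since the standard metric on a quasi-countable inverse semigroup is unique up to coarse equivalence \cite{CMSz}, the identity maps $(S,d_S)\to(S,d_S^{\ast})$ and $(T,d_T^{\ast})\to(T,d_T)$ are coarse equivalences, hence bornologous; say $d_S^{\ast}(s,t)\le\alpha(d_S(s,t))$ and $d_T(u,v)\le\beta(d_T^{\ast}(u,v))$ with $\alpha,\beta$ monotone non-decreasing and finite on $[0,\infty)$. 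Then for all $s,t\in S$,
\[
 d_T(\rho(s),\rho(t)) \le \beta\bigl(d_T^{\ast}(\rho(s),\rho(t))\bigr) \le \beta\bigl(d_S^{\ast}(s,t)\bigr) \le \beta\bigl(\alpha(d_S(s,t))\bigr),
\]
using \textup{(\ref{item:metricdescends1})} and monotonicity of $\beta$, so $\rho$ is bornologous with witness $\rho_+=\beta\circ\alpha$.

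I expect the only genuine obstacle to be in \textup{(\ref{item:metricdescends1})}: one must pass the images of $S$'s generators into an honest quasi-generating set of $T$ without destroying properness of the metric on $T$, while simultaneously ensuring no image generator is assigned greater weight than its source. The device of taking $Z=X\sqcup Y$ and letting the weight function on $T$ restrict over the copy of $X$ to that on $S$ resolves both requirements at once. Part \textup{(\ref{item:metricdescends2})} is then essentially formal, depending only on the standard facts that bornologous maps compose and that a coarse equivalence — here realised by an identity map, hence invertible — is bornologous in either direction.
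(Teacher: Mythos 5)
Your proposal is correct and follows essentially the same route as the paper: part (\ref{item:metricdescends1}) by building compatible weighted word metrics from a quasi-generating set of $S$ pushed forward into an enlarged quasi-generating set of $T$ with a common weight function, and part (\ref{item:metricdescends2}) by composing with the coarse equivalences between different standard metrics. The only cosmetic difference is that you take the disjoint union $X\sqcup Y$ where the paper extends $X$ to a quasi-generating set of $T$; these are the same device.
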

\begin{proof}
Let $X$ be a countable set of quasi-generators for $S$, equipped with some weight function $w \colon X \to \mathbb{N}$ with $w^{-1}(n)$ finite for every $n$. Then
$X$ can also be viewed as a quasi-generating set for the subsemigroup of $T$ generated by the image of $\rho$; clearly
we may extend $X$ to a countable set of quasi-generators $Y$ for the whole of $T$, and then extend $w$ to a weight function
for $Y$ (which we will also denote by $w$) preserving the property that $w^{-1}(n)$ is finite for every $n$. These weighted sets of quasi-generators induce standard metrics $d_S$ and $d_T$ on $S$ and $T$ respectively, as described in the previous section. We claim that $\rho$ is weakly contracting with respect to these standard metrics. Indeed, if $s, t \in S$ are such $d_S(s,t) < \infty$ then there exists a word $u=x_1^{\epsilon_1} \cdots x_n^{\epsilon^n} \in (X \cup X^{-1})^\ast$ such that $w(x_1)+ \ldots +w(x_n)=d_S(s,t)$ and $s [u]_S = t$ in $S$. Since $\rho$ is a homomorphism, we have also $\rho(s) [u]_T = \rho(t)$ in $T$, which means
that $d_T(\rho(s),\rho(t)) \leq d_S(s,t)$. This establishes claim (\ref{item:metricdescends1}).

Claim (\ref{item:metricdescends2}) follows from claim (\ref{item:metricdescends1}), the fact that different standard metrics on the same quasi-countable inverse semigroup are coarse equivalent \cite[Proposition 3.22]{CMSz}, and the fact (immediate from the definitions) that the composition in either order of a bornologous map and a coarse equivalence is a bornologous map.
\end{proof}

Our next aim is to establish that Property A lifts under bounded-to-one weakly contractive maps.
We use an approach to defining Property A due originally to Higson and Roe for metric spaces (see for example \cite{NY12}), which was generalised to extended metric
spaces in \cite{LM21}.

Recall that $\ell_1(X)_{1,+}$ consists of those functions $X \to \mathbb R^+$ with $\ell_1$-norm $1$. As we consider no other norms in the paper, we denote the  $\ell_1$-norm of a function $f \colon X \to \mathbb R$ by $\|f\|$. The support of $f$ is defined as the set $\supp f=\{x \in X \colon f(x)\neq 0\}$.

\begin{Def}[Property A]
\label{def:propA}
Let $(X, d)$ be a uniformly discrete extended metric space with bounded geometry. Then $X$ has \emph{Property $A$} if and only if for every $\epsilon > 0$ and $R >0$ there exists a map $\xi \colon X \to \ell_1(X)_{1,+}, x \mapsto \xi_x$, and a number $S >0$ such that 
\begin{enumerate}
\item $\|\xi_x-\xi_y\| \leq \epsilon$ if $d(x,y) \leq R$, and
\item $\supp \xi_x \subseteq B(x,S)$.
\end{enumerate}
\end{Def}

\begin{Thm}\label{thm:contractive-propA}
Suppose $(Y, d_Y)$ is a uniformly discrete extended metric space with bounded geometry, and $(X,d_X)$ is a uniformly discrete extended metric space which admits a weak contraction to $Y$ which is at most $k$-to-one on each of the components of $(X,d_X)$. If $(Y,d_Y)$ has Property A, then $(X,d_X)$ has Property A.
\end{Thm}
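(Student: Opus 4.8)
The plan is to transport the Higson--Roe witness for Property A from $Y$ to $X$ along the fibres of the weak contraction $f\colon X\to Y$.

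First I would make two reductions. The two conditions in Definition~\ref{def:propA} only ever compare points of a common component, and distinct components contribute functions with disjoint supports, so it suffices to treat one component $C$ of $X$ at a time, provided the parameters produced depend only on $\epsilon,R,k$ and the data of $Y$. (One also notes that $X$ has bounded geometry, since $f$ maps each ball $B_X(x,r)$ at most $k$-to-one into $B_Y(f(x),r)$, so that Property A is meaningful for $X$.) Secondly, replacing $Y$ by the subspace $f(C)$ I may assume $f|_C$ is onto; this uses that Property A passes to subspaces of a bounded geometry space with a support radius depending only on the ambient data, so the witnesses for all the $f(C)$ can be chosen uniformly.

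Now fix $C$, write $D=f(C)$, and let $(\eta_y)_{y\in D}$ witness Property A of $D$ for parameters $(\epsilon',R)$ with $\supp\eta_y\subseteq B_D(y,S')$, where $\epsilon'$ is to be chosen small. Fix a colouring of $C$ by $\{1,\dots,k\}$ that is injective on each fibre of $f$. For $x\in C$ and a truncation radius $T$, form $u_x^{(T)}\colon C\to\mathbb{R}^{+}$ by distributing each mass $\eta_{f(x)}(y')$ equally over $f^{-1}(y')\cap C$ and deleting all mass outside $B_C(x,T)$; set $\xi_x^{(T)}=u_x^{(T)}/\lVert u_x^{(T)}\rVert$ and $\xi_x=\frac1N\sum_{T=T_0}^{T_0+N-1}\xi_x^{(T)}$. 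Then $\xi_x\ge 0$, $\lVert\xi_x\rVert=1$, and $\supp\xi_x\subseteq B_C(x,T_0+N-1)$, giving the support condition with $S=T_0+N-1$. For the perturbation bound, when $d_C(x,y)\le R$ one has $\lVert\eta_{f(x)}-\eta_{f(y)}\rVert\le\epsilon'$; splitting $\xi_x^{(T)}-\xi_y^{(T)}$ by first changing the source measure and then the truncated transport plan, the first contribution is at most $\lVert\eta_{f(x)}-\eta_{f(y)}\rVert$ (the equal splitting keeps this free of a factor $k$), and the second is supported on fibres meeting the annulus $\{T-R<d_C(x,\cdot)\le T+R\}$, so averaging over $T$ contributes $O(Rk/N)$. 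Granted a uniform lower bound on the normalisers $\lVert u_x^{(T)}\rVert$ (costing only a factor $O(k)$), this yields $\lVert\xi_x-\xi_y\rVert\le 2k\epsilon'+O(Rk^2/N)\le\epsilon$ for suitable $\epsilon'$ and $N$.

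The genuine obstacle is this uniform lower bound on $\lVert u_x^{(T)}\rVert$ --- equivalently, that for one $T_0$ depending only on $\epsilon',R,k$ and the bounded-geometry function of $Y$, every $y'\in B_D(f(x),S')$ has a preimage within distance $T_0$ of $x$ in $C$. This is exactly where the bounded-to-one hypothesis must be used: without it, $f$ could fold an unboundedly large part of $X$ onto a bounded patch of $Y$, leaving no preimage of a neighbour of $f(x)$ near $x$. Even with the hypothesis, though, the statement fails whenever $f$ has unbounded \emph{distortion} (which, as Section~\ref{sec:sigma} shows, really occurs), so the construction above cannot be run verbatim; the final argument must be subtler --- for instance, first decomposing $C$ into its coarsely connected pieces at scale $R$ and, on a piece where a bounded-cardinality fibre is badly distorted, supplementing the transported witness by one built from the intrinsic metric of $X$, or equivalently showing that $C$ coarsely embeds into a product of $D$ with a bounded-geometry Property A space absorbing this distortion. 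Identifying and establishing the correct such statement is the heart of the matter; the rest is routine.
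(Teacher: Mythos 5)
Your transport strategy starts in the right place, but the proposal is not a proof: you yourself isolate the crucial difficulty --- that a point $y'\in\supp\xi_{f(x)}$ may have no preimage anywhere near $x$, since the $k$-to-one hypothesis controls the \emph{number} of preimages per component but not their \emph{location} --- and then leave it unresolved, offering only speculative workarounds (decomposing at scale $R$ and ``supplementing'' the witness, or coarsely embedding $C$ into a product). None of these is carried out, and the claim that the construction must be rescued from unbounded distortion is a red herring: the theorem holds with no distortion hypothesis at all, so whatever argument works cannot need one.

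The resolution in the paper is to abandon the idea of placing mass on actual fibres $f^{-1}(y')$. Instead, for each $p\in Y$ set $C_p=f^{-1}(B_Y(p,S))$ and observe that $x$ itself lies in $C_p$ for every $p\in\supp\xi_{f(x)}$. Define $a\sim_p b$ on $C_p$ by chaining points of $C_p$ with consecutive distances at most $R$. The $k$-to-one hypothesis together with bounded geometry of $Y$ gives $|C_p\cap X_0|\le kc$ on each component $X_0$, so every $\sim_p$-class has diameter at most $S'=kcR$. Fixing a cross-section $h_p$ of the $\sim_p$-classes, one places the mass $\xi_{f(x)}(p)$ at the single point $h_p(x)$, which is within $S'$ of $x$ \emph{by construction} --- it is a designated point of $X$ near $x$, not a preimage of $p$, so the obstacle you identified never arises. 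The resulting $\zeta_x$ has norm $1$ automatically (the map $p\mapsto h_p(x)$ just redistributes the mass of $\xi_{f(x)}$, so no normalisation or averaging over truncation radii is needed), and if $d_X(x,z)\le R$ then for every $p$ in both supports one has $x,z\in C_p$ and $x\sim_p z$, hence $h_p(x)=h_p(z)$; the mass of $p$ lands at the same point for $x$ and for $z$, and the difference collapses to $\|\zeta_x-\zeta_z\|\le\|\xi_{f(x)}-\xi_{f(z)}\|\le\epsilon$. This scale-$R$ equivalence relation on the sets $C_p$, with its coherent choice of representatives, is the missing idea; without it your estimate cannot be completed.
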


\begin{Proof}
Let $f\colon X \to Y$ be a weak contraction with the property above. Suppose that $Y$ has property $A$. Let $\epsilon > 0$ and $R > 0$. Then by definition there exist $S > 0$ and a map
$\xi \colon Y \to \ell_1(Y)_{1,+}, y \mapsto \xi_y$ such that each $\xi_y$ is supported on $B_Y(y,S)$ and $\|\xi_{y_1}-\xi_{y_2}\| \leq \epsilon$ if $d_Y(y_1,y_2) \leq R$.
To show that $X$ has property A, we will define an $S' > 0$ and a map $\zeta \colon X \to \ell_1(X)_{1,+}, x \mapsto \zeta_x$ such that
such that $\zeta_x$ is supported on $B_X(x,S')$ and $\|\zeta_{x_1}-\zeta_{x_2}\| \leq \epsilon$ if $d_X(x_1,x_2) \leq R$.

Since $Y$ has bounded geometry, there exists a constant $c$ such that $|B_Y(y,S)|\leq c$ for all $y \in Y$. Put $S'=k c R$.

For $p \in Y$, let $C_p=f^{-1}(B(p,S)) \subseteq X$. Notice that as $f$ is at most $k$-to-one on connected components, for every connected component
$X_0$ of $X$, we have 
$|C_p \cap X_0| \leq k |B(p,S)| \leq k c$, so that  $|C_p \cap X_0|R \ \leq \ k c R \ = \ S'$.

We define an equivalence relation $\sim_p$ on $C_p$ by putting $a \sim_p b$ if there exist points
$x_0, \ldots, x_n \in C_p$ with $a=x_0, b=x_n$, and $d(x_i, x_{i+1}) \leq R$ for $0 \leq i < n$.

We claim that each $\sim_p$-class has diameter at most $S'$ in $X$. Indeed, let $a, b \in C_p$ with $a \sim_p b$, and let $x_0, \ldots, x_n$ be as given by the definition of $\sim_p$. Clearly all the $x_i$ must lie in the same connected component of $X$; call it $X_0$. Without loss of generality, we may assume the $x_i$s are all distinct, which
since they all lie in $C_p \cap X_0$ implies that $n+1 \leq |C_p \cap X_0|$. Then
$$d_X(a,b)= d_X(x_0,x_n)\leq d_X(x_0,x_1)+ \cdots +d_X(x_{n-1},x_n)
\leq nR  < |C_p \cap X_0|R \leq S',$$
proving the claim.

Choose a cross-section of the $\sim_p$-classes, letting $h_p \colon C_p \to C_p$ map $x \in C_p$ to the chosen representative of its equivalence class. Since $x \sim_p h_p(x)$ for any $x \in C_p$, we have $d_X(x,h_p(x))\leq S'$ by the previous claim.

We are now ready to define $\zeta_x$ for each $x \in X$. Set

$$\zeta_x(q)=
\begin{cases}
\displaystyle\sum_{\substack{p\in Y:\  h_p(x)=q\\p \in \supp\xi_{f(x)}}}\xi_{f(x)}(p) \ \ \ \ \ \ \ \ &\hbox{if } q \in B(x,S'),\\
0 &\hbox{otherwise}.
\end{cases}$$

Clearly, $\supp\zeta_x \subseteq B(x,S')$ by definition, and $\zeta_x$ is a positive function because $\xi_{f(x)}$ is. We proceed by showing that $\zeta_x$ has norm $1$. By definition, 
$$\|\zeta_x\| \ = \ \sum_{q \in B(x,S')}\sum_{\substack{p \in Y:\  h_p(x)=q\\p \in \supp\xi_{f(x)}}}\xi_{f(x)}(p),$$
and we claim that 
$$\sum_{q \in B(x,S')}\sum_{\substack{p \in Y:\  h_p(x) \ = \ q\\p \in \supp\xi_{f(x)}}}\xi_{f(x)}(p)=\sum_{p \in \supp\xi_{f(x)}}\xi_{f(x)}(p).$$
The right hand side is just $\|\xi_x\|$ which by assumption is $1$, so the statement follows from the equality.
To see that the equality holds, first note that every summand on the right hand side occurs at most once in the sum on the left hand side, so it suffices to show that all of the terms occur. If $p \in \supp \xi_{f(x)}$, then $d_Y(p, f(x)) \leq S$, and thus $x \in f^{-1}(B(p,S))=C_p$. Therefore $h_p(x)$ is defined, and as previously observed, $d_X(x, h_p(x)) \leq S'$, so putting $q=h_p(x)$ we have $q \in B(x, S')$, and $\xi_{f(x)}(p)$ occurs on the left hand side indeed.

What remains to be shown is that $\|\zeta_x-\zeta_z\| \leq \epsilon$ if $d_X(x,z) \leq R$ for $x,z \in X$. Since $d_Y(f(x),f(z))\leq d_X(x,z)<R$, we have $\|\xi_{f(x)}-\xi_{f(z)}\| \leq \epsilon$, so it suffices to show that $\|\zeta_x-\zeta_z\|\leq\|\xi_{f(x)}-\xi_{f(z)}\|$. By definition we have
\begin{align*}
\|\zeta_x-\zeta_z\|&= \sum_{y\in X} \left| \zeta_x(y)-\zeta_z(y)  \right| = 
\sum_{y\in X}\left|\displaystyle\sum_{\substack{p\in Y:\  h_p(x)=y\\p \in \supp\xi_{f(x)}}}\xi_{f(x)}(p)-\displaystyle\sum_{\substack{p\in Y:\  h_p(z)=y\\p \in \supp\xi_{f(z)}}}\xi_{f(z)}(p)\right|.\\
\end{align*}
Notice that if $p \in \supp \xi_{f(x)} \cap \supp \xi_{f(z)}$, then as before we have $x,z \in C_p$, and $d(x,z) \leq R$ implies $x \sim_p z$ by the definition of $\sim_p$, hence $h_p(x)=h_p(z)$. So following on from the previous calculation, we have
\begin{align*}
&\|\zeta_x-\zeta_z\|\leq \\
&\leq \sum_{y\in X}\left(
\sum_{\substack{p \in \supp\xi_{f(x)}\\ p \in \supp\xi_z\\h_p(x)=y\\h_p(z)=y}} \!\!\!\!\left|\xi_{f(x)}(p)-\xi_{f(z)}(p)\right|
+ \sum_{\substack{p \in \supp\xi_{f(x)}\\ p \notin \supp\xi_{f(z)}\\h_p(x)=y}}\left|\xi_{f(x)}(p)\right|
+ \sum_{\substack{p \notin \supp\xi_{f(x)}\\ p \in \supp\xi_{f(z)}\\h_p(z)=y}}\left|\xi_{f(z)}(p)\right|
\right ) \\
&=
\sum_{y\in X}\left(
\sum_{\substack{p \in \supp\xi_{f(x)}\\ p \in \supp\xi_{f(z)}\\h_p(x)=y\\h_p(z)=y}} \!\!\!\!\!\!\!\left|\xi_{f(x)}(p)-\xi_{f(z)}(p)\right|
+ \!\!\!\!\sum_{\substack{p \in \supp\xi_{f(x)}\\ p \notin \supp\xi_{f(z)}\\h_p(x)=y}}\!\!\!\!\!\!\left|\xi_{f(x)}(p)-\xi_{f(z)}(p)\right|
+ \!\!\!\!\sum_{\substack{p \notin \supp\xi_{f(x)}\\ p \in \supp\xi_{f(z)}\\h_p(z)=y}}\!\!\!\!\!\!\left|\xi_{f(x)}(p)-\xi_{f(z)}(p)\right|
\right ) \\
&=\sum_{p \in Y} | \xi_{f(x)}(p)-\xi_{f(z)}(p)|=\|\xi_{f(x)}-\xi_{f(z)}\|,
\end{align*}
as needed.

\end{Proof}

We apply the theorem above for morphisms between inverse semigroups.

\begin{Thm}
	Let $S$ and $T$ be quasi-countable inverse semigroups equipped with standard metrics. Suppose there is a morphism $\varphi \colon S \to T$ and a constant $k$ such that $\varphi$ is at most $k$-to-one on each
	$\mathcal{R}$-class of $S$. If $T$ has Property $A$, then $S$ has Property $A$.
\end{Thm}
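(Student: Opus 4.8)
The plan is to deduce this from Theorem~\ref{thm:contractive-propA} and Proposition~\ref{prop:metricdescends}, using the fact that Property~A is a coarse invariant to move freely between different standard metrics.

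First I would apply Proposition~\ref{prop:metricdescends}(\ref{item:metricdescends1}) to fix standard metrics $d_S$ on $S$ and $d_T$ on $T$ with respect to which $\varphi$ is a weak contraction. By the definition of a standard metric, the components of $(S,d_S)$ are precisely the $\mathcal{R}$-classes of $S$ (and likewise for $T$), so the hypothesis that $\varphi$ is at most $k$-to-one on each $\mathcal{R}$-class of $S$ says exactly that $\varphi$ is at most $k$-to-one on each component of $(S,d_S)$. Both $(S,d_S)$ and $(T,d_T)$ are uniformly discrete by definition of a standard metric, and both have bounded geometry: properness yields, for each $r\geq 0$, a finite set $F$ with $B(x,r)\subseteq \{x\}\cup xF$ for every $x$, so balls of radius $r$ have cardinality at most $|F|+1$.

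Next I would transfer Property~A of $T$ onto the specific metric $d_T$. The hypothesis is that $T$ has Property~A with respect to some (equivalently any) standard metric; since any two standard metrics on a quasi-countable inverse semigroup are coarse equivalent \cite[Proposition 3.22]{CMSz} and Property~A is preserved under coarse equivalence, $(T,d_T)$ has Property~A. Now Theorem~\ref{thm:contractive-propA} applies verbatim: $(T,d_T)$ is uniformly discrete with bounded geometry and has Property~A, and $(S,d_S)$ is uniformly discrete and admits a weak contraction to $(T,d_T)$ which is at most $k$-to-one on each component. Hence $(S,d_S)$ has Property~A. Finally, appealing once more to coarse invariance of Property~A together with the uniqueness of the standard metric up to coarse equivalence, $S$ has Property~A with respect to every standard metric, as required.

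I do not expect a genuine obstacle here: the real work is already contained in Theorem~\ref{thm:contractive-propA} and Proposition~\ref{prop:metricdescends}. The only points needing care are the bookkeeping identifications — that the components of a standard metric are exactly the $\mathcal{R}$-classes, that standard metrics have bounded geometry (so that Definition~\ref{def:propA} even applies), and that ``$T$ has Property~A'' is unambiguous because it is independent of the choice of standard metric.
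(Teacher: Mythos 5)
Your proposal is correct and follows essentially the same route as the paper: apply Proposition~\ref{prop:metricdescends} to choose standard metrics making $\varphi$ a weak contraction, transfer Property~A to $(T,d_T)$ by coarse invariance, invoke Theorem~\ref{thm:contractive-propA}, and conclude by coarse invariance again. The extra bookkeeping you supply (components being $\mathcal{R}$-classes, bounded geometry from properness) is accurate and merely makes explicit what the paper leaves implicit.
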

\begin{proof}
        By Proposition~\ref{prop:metricdescends} there are standard metrics $d_S$ and $d_T$ on $S$ and $T$ with respect to which the morphism
        $\varphi$ is a weak contraction. Since Property A is coarse invariant, $T$ may be assumed to have Property A with respect to
        the standard metric $d_T$. Now by Theorem \ref{thm:contractive-propA}, $S$ has Property A with respect to the standard metric
        $d_S$, and hence with respect to every standard metric.
\end{proof}

In particular, applying to the case where $S$ is a finitely quasi-generated $E$-unitary inverse semigroup and $T$ its maximal group image, we obtain 
a geometric proof of the implication (4) $\implies$ (1) of \cite[Theorem 4.25]{LM21}, thereby answering the question the authors pose after the proof. 
We remark that the authors in \cite{LM21} study a more restrictive class of inverse semigroups (which they call \textit{finitely labelable}) than quasi-countable, as \cite{LM21} predates the introduction of what we call here standard metrics. We state the result in the full generality of quasi-countable inverse semigroups.

\begin{Cor}
	\label{cor:Diegos-question}
	If $S$ is a quasi-countable E-unitary inverse semigroup whose maximal group image has Property $A$, then $S$ has Property A.
\end{Cor}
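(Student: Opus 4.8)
The plan is to deduce Corollary~\ref{cor:Diegos-question} directly from the theorem just proved, by exhibiting the natural map $\sigma\colon S \to S/\sigma$ as an inverse semigroup homomorphism which is uniformly bounded-to-one (in fact injective) on each $\mathcal{R}$-class. First I would recall that $S/\sigma$, being the maximal group image of a quasi-countable inverse semigroup, is itself countable, hence quasi-countable (a group is quasi-countable precisely when it is countable), so both $S$ and $S/\sigma$ can be equipped with standard metrics and the hypotheses of the preceding theorem make sense. The key structural input is the definition of $E$-unitary: $S$ is $E$-unitary exactly when $\sigma$ restricted to each $\mathcal{R}$-class of $S$ is injective. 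Thus $\sigma$ is trivially at most $1$-to-one on each $\mathcal{R}$-class, so we may take $k = 1$ in the theorem.

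With these observations in hand, the proof is essentially a one-line application: by hypothesis the maximal group image $G = S/\sigma$ has Property A, so by the theorem (with $\varphi = \sigma$ and $k = 1$) the inverse semigroup $S$ has Property A. The only minor points to check are (a) that $\sigma$ is genuinely a homomorphism of inverse semigroups in the sense required — this is immediate since it is the quotient by a congruence — and (b) that $S/\sigma$ is quasi-countable, which as noted reduces to countability of the group, and a countable set of quasi-generators (indeed generators) for $S$ maps onto a countable generating set for $G$.

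I do not anticipate a substantive obstacle here: all the work has already been done in Theorem~\ref{thm:contractive-propA} and its semigroup-theoretic consequence, together with Proposition~\ref{prop:metricdescends} which supplies compatible standard metrics making $\sigma$ a weak contraction. The one place requiring a moment's care is simply confirming that the finite-quasi-generation or finite-labelability hypotheses of \cite{LM21} are not actually needed — that is, that the argument goes through for arbitrary quasi-countable $E$-unitary $S$ — but this is already guaranteed by the fact that the preceding theorem is stated for quasi-countable inverse semigroups equipped with arbitrary standard metrics, and Property A is a coarse invariant so the choice of standard metric is immaterial. Hence the corollary follows.
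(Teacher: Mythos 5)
Your proposal is correct and matches the paper's own (implicit) argument exactly: the corollary is obtained by applying the preceding theorem with $\varphi = \sigma$ and $k=1$, using the paper's definition of $E$-unitary as injectivity of $\sigma$ on each $\mathcal{R}$-class, and noting that the maximal group image is a countable (hence quasi-countable) group. No further comment is needed.
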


\section{Distortion and (weak) $F$-Inverse properties}
\label{sec:sigma}

In this section we return to our main theme of the distortion of the map between the geometry of an inverse monoid and of its maximal group image. Throughout this section, $S$ will always denote an inverse semigroup, $G$ its maximal group image, and $\Gamma(G)$ the Cayley graph of $G$ (with respect to a common generating set $X$ of $S$ and $G$). Recall that the natural map from $S$ to $G$ induces a graph morphism from each Sch\"utzenberger graph $S\Gamma(s)$ of $S$ to $\Gamma(G)$ of the latter. Being morphisms (in the $E$-unitary case, embeddings) of labelled graphs, these maps are always weak contractions with respect to any weighted path metric, and therefore there is a bound on the extent to which they \textit{increase} distances in any standard metric. However, we will shortly see that the extent to which they \textit{contract} distances can be unbounded; this leads us to make the following definition:
  
\begin{Def}[bounded group distortion]\label{def:boundeddistortion}
Let $S$ be a quasi-countable inverse semigroup and $G$ its maximal group image, equipped with standard metrics $d_S$ and $d_G$ respectively. We say that $S$ has \textit{locally bounded group distortion} if for every $\mathcal{R}$-class $R$ there exists a function $\phi : [0,\infty) \to [0,\infty)$ such that
$$d_S(s,t) \leq \phi(d_G(s\sigma, t\sigma)) \textrm{ for all } s,t \in R.$$
We say that $S$ has \textit{uniformly bounded group distortion} if it has locally bounded group distortion with
the same choice of $\phi$ for every $\mathcal{R}$-class.
\end{Def}
We remark that in the case that $S$ is finitely generated and equipped with a word metric, it is equivalent to consider in the
definition functions $\phi : \mathbb{N} \to \mathbb{N}$. We also note that one may, in the definition, require without loss of
generality that the function $\phi$ be increasing. Indeed, if there is a $\phi$ satisfying the given inequalities then clearly we
may assume $\phi$ is supported only on the image of $d_G$. Now since $d_G$ is assumed to be proper its image intersects each
bounded set in finitely many points, so $\phi$ is supported on only finitely many points in each bounded set. It follows easily that
$\phi$ is bounded above by some increasing function, which we may use to replace it.

The
following proposition will allow us (among other things) to conclude that the definitions above are independent of the particular standard metrics chosen. The reader may prefer always to think of the metrics being weighted path metrics with respect to countable generating sets, or even to restrict attention to the finitely generated case and assume the
metric to be the true path metric in the Sch\"utzenberger graphs.

\begin{Prop}\label{prop:distortioncoarse}
A quasi-countable inverse semigroup has locally bounded group distortion if and only if the map $\sigma$ to the maximal group
image restricts to a coarse embedding on every $\mathcal{R}$-class. It has uniformly bounded group distortion if and only if $\sigma$ coarsely embeds each $\rrel$-class into the maximal group image with respect to the same choice of functions $\rho_+$ and $\rho_-$.
\end{Prop}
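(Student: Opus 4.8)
The plan is to unwind both sides of the claimed equivalence into the definitions of coarse embedding and locally/uniformly bounded group distortion, and then observe that the content reduces to matching up the two monotone ``control functions'' with the single distortion function $\phi$. Throughout, fix standard metrics $d_S$ on $S$ and $d_G$ on $G$; by Proposition~\ref{prop:metricdescends} we may (and do) choose them so that $\sigma$ is a weak contraction, which gives once and for all the upper bound $d_G(s\sigma,t\sigma)\le d_S(s,t)$ for all $s\rrel t$. Since any two standard metrics on a given quasi-countable inverse semigroup are coarse equivalent \cite[Proposition~3.22]{CMSz}, and composing a coarse equivalence with a coarse embedding (in either order) again yields a coarse embedding, both the property ``$\sigma$ restricts to a coarse embedding on each $\rrel$-class'' and ``$S$ has locally bounded group distortion'' are insensitive to the choice of standard metrics; this is what lets us prove the biconditional using the convenient metrics above and conclude it for all choices.

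For the forward direction (locally bounded group distortion $\Rightarrow$ coarse embedding on each $\rrel$-class), fix an $\rrel$-class $R$ with its distortion function $\phi$, which by the remark after Definition~\ref{def:boundeddistortion} we may take to be monotonically non-decreasing. Restricted to $R$, $\sigma$ already satisfies $d_G(s\sigma,t\sigma)\le \rho_+(d_S(s,t))$ with $\rho_+=\mathrm{id}$ (from weak contraction), so it remains to produce a lower control function $\rho_-$. Define $\rho_-(r)=\sup\{\,t\ge 0 : \phi(t)< r\,\}$ (with the usual conventions, and truncating to make it finite-valued if necessary); monotonicity of $\phi$ makes $\rho_-$ monotone, and $\phi(t)<\infty$ for all $t$ together with properness forces $\rho_-(r)\to\infty$ as $r\to\infty$. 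Then $d_S(s,t)\le\phi(d_G(s\sigma,t\sigma))$ rearranges to $\rho_-(d_S(s,t))\le d_G(s\sigma,t\sigma)$, exactly the coarse-embedding lower bound on $R$. Conversely, if $\sigma$ restricts to a coarse embedding on $R$ with lower control function $\rho_-$, set $\phi(r)=\sup\{\,t\ge 0 : \rho_-(t)\le r\,\}$; since $\rho_-(t)\to\infty$ this supremum is finite for every $r$, and $\rho_-(d_S(s,t))\le d_G(s\sigma,t\sigma)$ gives $d_S(s,t)\le\phi(d_G(s\sigma,t\sigma))$ for all $s,t\in R$, which is the definition of locally bounded group distortion.

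The uniform statement is then the observation that ``uniformly bounded group distortion'' says exactly that the single function $\phi$ can be chosen independently of $R$, and the constructions above turn a single $\phi$ into a single pair $(\rho_+,\rho_-)=(\mathrm{id},\rho_-)$ and vice versa, with no dependence on $R$; conversely, a uniform pair $(\rho_+,\rho_-)$ (note $\rho_+$ may be taken to be $\mathrm{id}$ in the present setting, since $\sigma$ is a weak contraction for our chosen metrics) yields a uniform $\phi$ by the same formula. So one simply runs the two constructions of the previous paragraph ``with constants not depending on the $\rrel$-class'' and reads off the result.

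I expect the only genuinely delicate point to be bookkeeping around the generalized inverses $\rho_-$ and $\phi$ of monotone but not necessarily strictly monotone or continuous functions: one must check these ``pseudo-inverses'' are well defined, finite-valued, monotone, and have the required limiting behaviour, and that the inequalities really do flip in the intended direction (e.g.\ that $\phi(t)<r$ implies $\rho_-(r)\ge t$ once $\rho_-$ is defined as a supremum). This is routine once set up carefully, and the remark following Definition~\ref{def:boundeddistortion} — that $\phi$ may be taken increasing — together with properness of $d_G$ (whose image meets each bounded set finitely) does all the heavy lifting. The coarse-invariance step is likewise routine given \cite[Proposition~3.22]{CMSz} and the stability of coarse embeddings under composition with coarse equivalences, and the upper bound throughout is free from the weak-contraction property established in Proposition~\ref{prop:metricdescends}.
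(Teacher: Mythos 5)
Your proof is correct and follows essentially the same route as the paper: both directions come down to taking a monotone pseudo-inverse of the bounding function $\phi$ (respectively of $\rho_-$), with the upper control function supplied by Proposition~\ref{prop:metricdescends} and the remark after Definition~\ref{def:boundeddistortion} letting one assume $\phi$ increasing. The only structural difference is that you fix metrics making $\sigma$ a weak contraction and then invoke metric-independence of both sides, whereas the paper argues for arbitrary standard metrics via the bornologous property and only afterwards deduces metric-independence of bounded distortion (Corollary~\ref{cor:boundeddistortionindependent}) as a consequence of this very proposition --- so on your route you should justify that independence directly (by composing $\phi$ with the control functions of the coarse equivalence between the two metrics) rather than treating it as known.
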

\begin{proof}
Let $S$ be an inverse semigroup, $G$ its maximal group image and $\sigma : S \to G$ the natural map.

Assume first that $S$ has uniformly bounded group distortion, with $\phi$ being the bounding function. By the remark under
Definition~\ref{def:boundeddistortion} we may assume that $\phi$ is increasing.
By Proposition~\ref{prop:metricdescends} the map $\sigma$ is bornologous
and so there exists a function $\rho_+ \colon [0, \infty] \to [0, \infty]$ with $\rho_+^{-1}(\infty)=\infty$ such that
$d_G(s \sigma, t \sigma)\leq \rho_+(d_S(s,t))$ for any $s, t \in S$, and in particular for any pair of $\rrel$-related elements $s,t \in S$.
What remains is to find a function $\rho_- : [0,\infty] \to [0,\infty]$ which tends to infinity and such 
$\rho_-(d_S(s,t)) \leq d_G(s \sigma, t \sigma)$ for every pair of $\rrel$-related elements $s, t \in S$.
We define $\rho_-$ by
$$\rho_-(k) = \inf \lbrace p \in \mathbb{R} \mid k \leq \phi(p) \rbrace.$$
It is easy to see that $\rho_-$ is weakly increasing, and since $\phi$ is increasing by assumption, we must have  $\rho_-(k) \geq p$ whenever $k > \phi(p)$, so $\rho_-$ tends to infinity. Now for every $r \in \mathbb R$ we have
$$\rho_-(\phi(r)) = \inf \lbrace p \mid \phi(r) \leq \phi(p) \rbrace \leq r,$$
since the set $\lbrace p \mid \phi(r) \leq \phi(p) \rbrace$ contains $r$. For every $s,t \in S$ with $s \rrel t$, by the defining property of
$\phi$ we have $d_S(s,t) \leq \phi(d_G(s\sigma, t\sigma))$, so applying the weakly increasing function $\rho_-$ to both sides we obtain
$$\rho_-(d_S(s,t)) \leq \rho_-(\phi(d_G(s\sigma, t\sigma))) \leq d_G(s \sigma, t\sigma)$$
as needed.

Conversely, assume that $\sigma$ is a coarse embedding on each $\rrel$-class with respect to uniformly chosen functions $\rho_-$ and $\rho_+$, that is we have $\rho_-(d_S(s,t)) \leq d_G(s \sigma, t \sigma)$ for any pair of $\rrel$-related elements $s, t \in S$. Now since $\rho_-$ tends to infinity we may define $\phi$ by
$$\phi(r) = \sup \lbrace p \in \mathbb{R} \mid \rho_-(p) \leq r \rbrace$$
whereupon $\phi(\rho_-(k)) \geq k$ for all $k \in \mathbb{R}$. Moreover, $\phi$ is easily seen to be weakly increasing, so  for every $s, t \in S$ with $s \rrel t$ we have
$$d_S(s,t) \leq \phi(\rho_-(d_S(s,t)) \leq \phi(d_G(s\sigma, t\sigma))$$
as required to show that then $S$ has uniformly bounded group distortion.

An almost identical argument applies in the locally bounded case.
\end{proof}

We remark that there is a subtle, but significant difference between $\sigma$ being a coarse equivalence (which implies that $S$ is a group) and $S$ having uniformly bounded group distortion, the former corresponding to an inequality $\rho_-(d_S(s,t)) \leq d_G(s \sigma, t \sigma) \leq \rho_+(d_S(s,t))$ for all pairs of $s,t$, and the latter only satisfying this when $s \rrel t$. 

Since the characterisations given above are both clearly invariant under coarse equivalence, and standard metrics are unique up to coarse equivalence \cite{CMSz}, a consequence of the above is the following:
\begin{Cor}\label{cor:boundeddistortionindependent}
The definitions of locally and uniformly bounded group distortion are independent of the
choice of standard metrics.
\end{Cor}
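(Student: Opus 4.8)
The plan is to deduce this directly from the coarse-geometric characterisation in Proposition~\ref{prop:distortioncoarse} together with the fact \cite{CMSz} that standard metrics on a quasi-countable inverse semigroup are unique up to coarse equivalence. Fix a quasi-countable inverse semigroup $S$ with maximal group image $G$, and let $(d_S,d_G)$ and $(d_S',d_G')$ be two choices of standard metrics on $S$ and $G$. By Proposition~\ref{prop:distortioncoarse}, $S$ has locally bounded group distortion with respect to $(d_S,d_G)$ precisely when the restriction of $\sigma$ to each $\rrel$-class $R$ is a coarse embedding of $(R,d_S|_R)$ into $(G,d_G)$, and it has uniformly bounded group distortion precisely when this holds with control functions $\rho_+,\rho_-$ that can be chosen independently of $R$; the same statements hold verbatim for the primed metrics. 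So it suffices to show that these coarse-geometric conditions are insensitive to the choice of standard metrics.

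Next I would use that, by \cite{CMSz}, the identity map is a coarse equivalence $(S,d_S)\to(S,d_S')$ and also a coarse equivalence $(G,d_G)\to(G,d_G')$. Since the components of a standard metric are exactly the $\rrel$-classes, restricting the first of these to a fixed $\rrel$-class $R$ gives a coarse equivalence $(R,d_S|_R)\to(R,d_S'|_R)$ — it is a coarse embedding because the ambient map is, and it is surjective, hence coarsely dense — and its control functions may be taken to be those of the ambient coarse equivalence, so in particular do not depend on $R$. The group $G$ has a single $\rrel$-class, so $(G,d_G)$ and $(G,d_G')$ are themselves coarsely equivalent via $\id$.

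Finally I would observe that a composite of a coarse equivalence, a coarse embedding and a coarse equivalence is again a coarse embedding, with control functions determined by those of the three constituent maps. Writing $\sigma|_R\colon(R,d_S'|_R)\to(G,d_G')$ as the composite of $\id\colon(R,d_S'|_R)\to(R,d_S|_R)$, then $\sigma|_R\colon(R,d_S|_R)\to(G,d_G)$, then $\id\colon(G,d_G)\to(G,d_G')$, we see that $\sigma|_R$ is a coarse embedding for the primed metrics whenever it is one for the unprimed metrics, and the converse follows by symmetry. Moreover, since the control functions of the two identity coarse equivalences above are independent of $R$, a uniform choice of $\rho_\pm$ for the unprimed metrics yields a uniform choice for the primed metrics, and vice versa. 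Combining this with Proposition~\ref{prop:distortioncoarse} gives the claim. There is no genuine obstacle here; the only point requiring a little care is this last piece of bookkeeping — ensuring that restricting a global coarse equivalence to an $\rrel$-class introduces no $R$-dependence into the control functions — which it does not, as one simply reuses the global ones.
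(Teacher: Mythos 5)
Your argument is correct and is essentially the paper's own: the paper deduces the corollary in one line from Proposition~\ref{prop:distortioncoarse} together with the uniqueness of standard metrics up to coarse equivalence, observing that the coarse-embedding characterisations are invariant under coarse equivalence. You simply spell out the bookkeeping (restriction of the identity coarse equivalence to an $\rrel$-class, composition of control functions) that the paper leaves implicit.
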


We can also characterise bounded group distortion in terms of proper length functions, which is the formulation we most often use:
\begin{Prop}
	\label{prop:length-function-distortion}
	Let $S$ be a quasi-countable inverse semigroup and $G$ its maximal group image, equipped with proper length functions $l_S$ and $l_G$ respectively. Then $S$ has locally bounded group distortion if and only if for every $\mathcal{D}$-class $D$ of $S$ there exists a function $\phi : [0,\infty) \to [0,\infty)$ such that for every $s \in D$
	we have $l_S(s) \leq \phi(l_G( s\sigma))$.
	$S$ has uniformly bounded group distortion if it satisfies the above condition uniformly with the same function $\phi$ for every $\mathcal{D}$-class.
\end{Prop}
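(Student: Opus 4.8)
The plan is to pass through the dictionary between standard metrics and proper length functions recalled in Section~\ref{sec:geometry}: by Corollary~\ref{cor:boundeddistortionindependent} I may take $d_S$ and $d_G$ to be the standard metrics induced by $l_S$ and $l_G$, so that $l_S(s)=d_S(ss^{-1},s)$ and $d_S(s,t)=l_S(s^{-1}t)$ whenever $s\rrel t$, while $l_G(g)=d_G(1,g)$ and $d_G(g,h)=l_G(g^{-1}h)$ since $G$ is a group; recall also that $\sigma$ is a homomorphism, so $\sigma(ax)=(a\sigma)(x\sigma)$, and that $d_G$, being induced by a length function on a group, is left-invariant. The statement to be proved differs from Definition~\ref{def:boundeddistortion} only in quantifying over $\drel$-classes rather than $\rrel$-classes, so the crux is to show that the distortion of $\sigma$ on an $\rrel$-class depends only on the $\drel$-class containing it.

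I would isolate the latter as a lemma. Suppose $e,f\in E(S)$ are $\drel$-related, witnessed by $a$ with $aa^{-1}=e$ and $a^{-1}a=f$. Then $x\mapsto ax$ is a bijection from the $\rrel$-class $R_f$ of $f$ onto the $\rrel$-class $R_e$ of $e$, with inverse $y\mapsto a^{-1}y$, and it is an isometry for $d_S$: for $x,x'\in R_f$ we have $(ax)^{-1}(ax')=x^{-1}(a^{-1}a)x'=x^{-1}x'$ (using $xx^{-1}=f=a^{-1}a$), so $d_S(ax,ax')=l_S(x^{-1}x')=d_S(x,x')$. Since $\sigma(ax)=(a\sigma)(x\sigma)$ and $d_G$ is left-invariant, also $d_G(\sigma(ax),\sigma(ax'))=d_G(x\sigma,x'\sigma)$. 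Hence any $\phi$ with $d_S(s,t)\le\phi(d_G(s\sigma,t\sigma))$ for all $s,t\in R_e$ has the same property on $R_f$. As every pair of $\rrel$-classes inside a fixed $\drel$-class $D$ is of this form, it follows that $S$ has locally bounded group distortion precisely when for each $\drel$-class $D$ there is a single $\phi_D$ bounding the distortion of $\sigma$ on every $\rrel$-class contained in $D$, and uniformly bounded group distortion precisely when one $\phi$ can be chosen to serve all $D$.

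It remains to match this with the length-function condition, using one more elementary fact: if $s\rrel t$ then $s^{-1}t$ lies in the $\drel$-class of $s$, since $(s^{-1}t)(s^{-1}t)^{-1}=s^{-1}(tt^{-1})s=s^{-1}(ss^{-1})s=s^{-1}s$ shows $s^{-1}t\rrel s^{-1}s\lrel s$. For the forward direction, assume locally bounded group distortion, fix a $\drel$-class $D$, and let $\phi_D$ be as above; for $s\in D$ the elements $ss^{-1}$ and $s$ both lie in the $\rrel$-class of $ss^{-1}$, which is contained in $D$, so $l_S(s)=d_S(ss^{-1},s)\le\phi_D(d_G(1,s\sigma))=\phi_D(l_G(s\sigma))$. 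Conversely, assume that for each $\drel$-class $D$ there is $\phi_D$ with $l_S(s)\le\phi_D(l_G(s\sigma))$ for all $s\in D$; given an $\rrel$-class $R\subseteq D$ and $s,t\in R$, the fact above gives $s^{-1}t\in D$, so $d_S(s,t)=l_S(s^{-1}t)\le\phi_D(l_G((s\sigma)^{-1}(t\sigma)))=\phi_D(d_G(s\sigma,t\sigma))$, exhibiting ($D$-uniformly) bounded distortion on $R$. Both computations preserve the use of a single common $\phi$, so the uniform statement follows verbatim.

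The only real obstacle is the lemma, and within it the verification that left multiplication by $a$ is a $d_S$-isometry between $\rrel$-classes; this is a brief computation with the basic identities $xx^{-1}x=x$ and with the fact that $xx^{-1}$ is the idempotent of the $\rrel$-class of $x$, and it is the one place where the inverse-semigroup structure, rather than formal manipulation of the metrics, is actually used. Everything else is bookkeeping with the definitions and the standard-metric / length-function correspondence.
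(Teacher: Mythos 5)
Your proposal is correct and follows essentially the same route as the paper: both arguments transport the distortion bound between $\rrel$-classes within a $\drel$-class via left translation (the paper invokes Green's Lemma and left subinvariance of $d_S$ to see the translations are isometries, while you verify the same fact by the direct computation $(ax)^{-1}(ax')=x^{-1}x'$), and the converse direction via $s^{-1}t\in D$ is identical to the paper's. The only difference is organisational — you package the transport step as a standalone lemma before converting to length functions — which does not change the substance.
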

\begin{proof}
	Let $d_S$ and $d_G$ be the standard metrics corresponding to $l_S$ and $l_G$ as discussed below Definition~\ref{def:length-function}. Suppose $S$ has locally bounded group distortion. Fix a
	$\mathcal{D}$-class $D$ and choose an $\mathcal{R}$-class $R$ in $D$, and a function
	$\phi : [0,\infty) \to [0,\infty)$ such that $d_S(x,y) \leq \phi(d_G(x\sigma, y\sigma))$ for all $x,y \in R$.
	Let $s \in D$ and denote the $\rrel$-class of $s$ by $R_s$; we aim to show that $l_S(s) \leq \phi(l_G(s\sigma))$.	 Then by Green's Lemma \cite[Theorem~2.3]{ClifPres}, we may choose $p,q \in S$ such
	that the left translations
	$$\begin{array}{cc}
		\lambda_p \colon R_s \to R & x \mapsto px\\
		 \lambda_q \colon R \to R_s & y \mapsto qy
	\end{array}$$
	are mutually inverse bijections between $R_s$ and $R$. Because $d_S$ is left subinvariant,
	these bijections must both be weakly contracting, which since they are mutually inverse implies that both are distance-preserving. Since $ps$ and $pss^{-1}$ lie
	in $R$, we have
	$$l_S(s) = d_S(ss^{-1}, s) = d_S(pss^{-1}, ps) \leq \phi(d_G((p ss^{-1}) \sigma, (ps) \sigma)) = \phi(d_G(1, s)) = \phi(l_G(s)),$$
	where the penultimate equality holds because the metric on $G$ is left invariant.
	
	Conversely, suppose for a $\mathcal{D}$-class $D$ there is a function $\phi : [0,\infty) \to [0,\infty)$ such that
	$l_S(s) \leq \phi(l_G( s\sigma)) \textrm{ for all } s\in D.$
	Then for any $x, y \in D$ with $x \rrel y$, we have that $x^{-1} y \in D$, and 
	$$d_S(x,y) = l_S(x^{-1} y) \leq \phi(l_G((x^{-1} y)\sigma)) = \phi(d_G(x,y)),$$
	hence $S$ has locally bounded group distortion. The uniformly bounded case follows immediately.
\end{proof}

The fact that a quasi-countable inverse semigroup can fail to have
locally bounded group distortion was observed (phrased in the language of course embeddings) in \cite{LM21}, with an example given in \cite[Proposition 5.4.6]{DMthesis}. Indeed, the following construction will allow us to show something even stronger: that \emph{every} subgraph of a group Cayley graph is the image in the maximal group image of a suitable Sch\"utzenberger graph of a suitable $E$-unitary inverse monoid. (This is already known for \textit{finite} subgraphs as a consequence
of the Margolis-Meakin expansion construction \cite{MM}, but for infinite subgraphs it does not seem to have been previously observed. In the related statement \cite[Theorem 3.24]{CMSz}, it is observed that every uniformly discrete, bounded metric space arises, up to bijective coarse equivalence, as the $\rrel$-class of some inverse semigroup, but the construction yields a non-$E$-unitary inverse semigroup.)

\begin{Thm}
	\label{thm:Schgraphs-main}
	Let $G=\langle X \rangle$ be a finitely generated group, and $\Delta$ an arbitrary connected subgraph of its Cayley graph. There exists an inverse semigroup $S$ generated by $X$ and a new symbol $e$ such that 
	\begin{enumerate}
		\item $S/\sigma=G$;
		\item $S$ is E-unitary; and
		\item $S$ has a Sch\"utzenberger graph which is isomorphic to $\Delta$ with an added $e$-edge labelling a loop around a chosen vertex.
	\end{enumerate}
\end{Thm}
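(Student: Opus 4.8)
The plan is to realise $\Delta$ as a Schützenberger graph by means of a Margolis–Meakin-style construction adapted to allow infinite subgraphs. Concretely, I would take $S$ to be an appropriate quotient of the free inverse monoid on $X \cup \{e\}$, designed so that the Schützenberger graph of $e$ is exactly $\Delta$ with a loop at the basepoint, and so that the maximal group image collapses back to $G$. Fix a base vertex $v_0$ of $\Delta$; after translating we may assume $v_0 = 1$, the identity of $G$. The idea is that the element $e$ should have $\mathcal{R}$-class (equivalently, Schützenberger graph) equal to $\Delta$, with $\alpha = \beta$ = the vertex $1$.

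The key steps, in order, are as follows. \emph{Step 1: define $S$.} Let $F$ be the free inverse monoid on $X \cup \{e\}$, and let $\theta \colon F \to G$ be the homomorphism sending each $x \in X$ to itself and $e$ to $1_G$. I would define $S = F/\rho$, where $\rho$ is the congruence generated by a carefully chosen family of relations of the form $ee = e$ (so $e$ is idempotent) together with, for each edge $g \xrightarrow{x} gx$ \emph{not} present in $\Delta$ but whose endpoints $g, gx$ both lie in $\Delta$, a relation forcing a corresponding "shortcut" idempotent; more robustly, one can instead take the presentation-free route and \emph{define} $S$ directly as a semigroup of partial bijections or via the Schützenberger-graph construction of Stephen. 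The cleanest approach: let $e$ be idempotent, and impose exactly the relations $e \cdot (w e w^{-1}) = e$ for every word $w$ over $X$ such that $\theta(w)$ is a vertex of $\Delta$. \emph{Step 2: compute $S\Gamma(e)$.} Using Stephen's iterative procedure (or Lemma~\ref{eunitaryschutz} if $E$-unitarity is established first), show that starting from $\operatorname{Lin}(e)$ — a single $e$-loop at a vertex — the $P$-expansions successively glue on exactly the paths corresponding to the imposed relations, and that the resulting $P$-complete deterministic graph is precisely $\Delta$ together with an $e$-loop at $v_0$. Connectedness of $\Delta$ is what guarantees the whole of $\Delta$ is reached. \emph{Step 3: verify $S/\sigma = G$ and $E$-unitarity.} Since every imposed relation lies in $\ker\theta$, the map $\theta$ factors through $S$, giving a surjection $S/\sigma \twoheadrightarrow G$; conversely the generators $X$ clearly generate no more than $G$ in the maximal group image because the only extra generator $e$ is idempotent (hence trivial in $S/\sigma$), so $S/\sigma = G$. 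For $E$-unitarity I would check that $\sigma$ is idempotent-pure, i.e. $\sigma^{-1}(1) = E(S)$: an element of $S$ maps to $1$ in $G$ iff, reading it as a walk in the relevant Schützenberger graph, it returns to its start, and the relations are all of the "idempotent" flavour lying over $1$, so no non-idempotent can collapse to $1$. This is exactly the kind of situation where the Margolis–Meakin argument applies, and I would mirror it.

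The main obstacle I anticipate is \textbf{Step 2 for infinite $\Delta$}: Stephen's procedure is a transfinite/inverse-limit construction, and one must argue carefully that (a) the limit graph is $\Delta$-plus-loop and not something larger (no spurious vertices or edges appear from determinisation — this uses that $\Delta$ is an embedded subgraph of a genuine group Cayley graph, so any two words labelling coterminal paths already agree in $G$), and (b) the limit graph is genuinely $P$-complete, which requires that for every relator and every pair of vertices, one side of the relation is present iff the other is — here one exploits that the imposed relations are indexed precisely by the vertex set of $\Delta$, together with the fact that $\Delta$ inherits all of $G$'s edge-completions only within its own vertex set. A secondary subtlety is ensuring $e$ really is an idempotent generator distinct from the existing structure and that the $e$-loop sits at $v_0$ and nowhere else, which follows from determinism of Schützenberger graphs once $P$-completeness is in hand. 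I would handle the infinite case by working with finite connected subgraphs $\Delta_n \nearrow \Delta$ exhausting $\Delta$, applying the (known) finite Margolis–Meakin result to each, and passing to the limit, checking that the Schützenberger graphs behave well under this union.
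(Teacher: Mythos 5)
Your overall strategy---present $S$ over $X \cup \{e\}$ with $e$ idempotent, use relations to force $\Delta$ into the Sch\"utzenberger graph of $e$, and verify via Stephen's procedure or Lemma~\ref{eunitaryschutz}---is the same as the paper's, but the specific relations you propose in your ``cleanest approach'' fail in two concrete ways.

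First, the relation set ``$e^2=e$ together with $e(wew^{-1})=e$ for every word $w$ over $X$ with $\theta(w)$ a vertex of $\Delta$'' imposes nothing that forces the defining relations of $G$ to hold in the maximal group image. Setting $e=1$, every relation $e(wew^{-1})=e$ collapses to the triviality $ww^{-1}=1$, so $S/\sigma$ is the \emph{free} group on $X$, not $G$ (unless $G$ happens to be free). Your Step 3 only yields a surjection $S/\sigma \twoheadrightarrow G$, and the claim that ``the generators $X$ generate no more than $G$'' is exactly what fails. The paper repairs this by adding the relations $w^2=w$ for every word $w \in (X\cup X^{-1})^\ast$ with $[w]_G=1$; these force $S/\sigma=G$ and simultaneously give $E$-unitarity.

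Second, and more seriously, your relations are indexed by arbitrary words $w$ over $X$ whose \emph{endpoint} lies in $V(\Delta)$, not by paths lying \emph{inside} $\Delta$. A $P$-expansion for $e(wew^{-1})=e$ sews on the whole path labelled $w$, which may traverse Cayley-graph edges outside $\Delta$, so the Sch\"utzenberger graph of $e$ acquires edges not in $\Delta$. Worse, the middle $e$ in $wew^{-1}$ plants an $e$-loop at $\theta(w)$, and every new $e$-loop retriggers every relation based at that vertex; the construction cascades, producing $e$-loops at all vertices of $\Delta$ and paths from every vertex of $\Delta$ to each of its translates by vertices of $\Delta$ --- in general all of $G$. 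The fix is to index the relations by finite paths $p$ in $\Delta$ starting at the basepoint and to use $e = e\,\ell(p)\ell(p)^{-1}$, with a single $e$ on the left: then the only $e$-loop ever created sits at the basepoint and the only edges ever added are edges of $\Delta$, and connectedness of $\Delta$ guarantees all of $\Delta$ is reached. Your fallback of exhausting $\Delta$ by finite subgraphs and passing to a limit of Margolis--Meakin expansions is not worked out and would still have to confront the same choice of relations, so it does not rescue the argument as written.
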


\begin{Proof}
	Since group Cayley graphs are homogeneous, we may assume without loss of generality that the vertex of $\Delta$ chosen for an $e$-loop is the identity
	element of $G$; indeed if not, say the chosen vertex is $v$, then we may replace $\Delta$ with its isomorphic translate $v^{-1} \Delta$.
	
	We define $S$ by a presentation. The generating set is $X \cup \{e\}$ (where $e \notin X \cup X^{-1}$), and the relations are the following:
	\begin{enumerate}[label=({R\arabic*})]
		\item \label{item:e-is-id} $e^2=e$;
		\item \label{item:group-pres} $w^2=w$ for every word $w \in (X \cup X^{-1})^\ast$  with $[w]_G=1$;
		\item \label{item:paths} $e=e \ell(p)\ell(p)^{-1}$ for every finite path $p$ in $\Delta$ starting at $1$, where $\ell(p)$ denotes the label of $p$.
	\end{enumerate}
	To prove that $S/\sigma=G$, observe that the group defined by regarding the presentation above as a group presentation is indeed $G$, as \ref{item:e-is-id} ensures $[e]_G= 1$, \ref{item:group-pres} yields a presentation for $G$ by definition, and the relations in \ref{item:paths} are then automatic. The fact that $S$ is $E$-unitary follows immediately from the relations in \ref{item:e-is-id} and \ref{item:group-pres}.
	
	Now let $\Gamma$ denote the Cayley graph of $G$ with respect to the generating set $X \cup \lbrace e \rbrace$. Clearly $\Gamma$ is obtained from the Cayley graph of $G$ with respect to $X$ by adding an
	$e$-labelled loop at each vertex.  Let $\Omega$ be the subgraph of $\Gamma$ consisting of $\Delta$ with an extra $e$-labelled loop at $1$. We claim that $\Omega$
	satisfies conditions of Lemma~\ref{eunitaryschutz} to be isomorphic to the Sch\"utzenberger graph of $e$. Indeed, it clearly contains the path (in other words, the loop) labelled $e$ starting at $1$. It remains to show that if one side of a defining relation of $S$ labels a path between two vertices in $\Omega$, then the other side of the relation labels a path
	between the same two vertices. For the relations of type \ref{item:e-is-id} and \ref{item:group-pres} notice that if $w = e$ or $w \in (X \cup X^{-1})^\ast$ with $[w]_G= 1$ then every path in $\Gamma$ labelled $w$ is closed; clearly this implies that $w$ labels a path between two vertices if and only if $w^2$ does so. 
	
	For the relations \ref{item:paths}, suppose $p$ is a path in $\Delta$ starting at $1$. If $e \ell(p)\ell(p)^{-1}$ labels a path from $u$ to $v$, then since the only $e$-edge in $\Omega$ is a loop at $1$ and $\ell(p)\ell{p}^{-1}$ can only label closed paths, we must have $u=v=1$ and certainly $e$ labels a path from $1$ to $1$. Conversely, if $e$ labels a path from $u$ to $v$, then again $u=v=1$, and since $p$ is by definition a path in $\Delta$ starting at $1$, we have that $e \ell(p) \ell(p^{-1})$ labels a path from $1$ to $1$ in $\Delta$.
	
	It remains only to show that $\Omega$ is the smallest subgraph of $\Gamma$ with the given properties. Suppose $\Omega'$ were a smaller such graph; we claim that
	$\Omega'$ contains every edge of $\Omega$, and therefore is equal to $\Omega$. By definition $\Omega'$ contains the $e$-loop at $1$. Now for any other edge $q$ in $\Omega$, $q$ is also an edge of $\Delta$ and since $\Delta$ is connected we may choose a path $p$ in $\Delta$ which starts at $1$ and traverses the edge $q$. Now since $e$ labels a path from $1$ to $1$ in $\Omega'$, the conditions on the relations \ref{item:paths} imply that $e \ell(p) \ell(p)^{-1}$ labels a path from $1$ to $1$ in $\Omega'$. Since $\Omega$ and $\Omega'$ are both subgraphs of the same deterministic graph $\Gamma$, this can only be the path which traverses $q$, so $q$ must be an edge
	of $\Omega'$.
\end{Proof}

For example, we can apply Theorem~\ref{thm:Schgraphs-main} with $G = \mathbb{Z} \times \mathbb{Z}_2$, $X$ the obvious generating set, and $\Delta$ the subgraph containing of all vertices and the edges $(0,0) \to (0,1)$, $(n, 0) \to (n+1, 0)$, $(n,1) \to (n+1, 1)$ for $n \in \mathbb{Z}$. Pairs of vertices of the form $(n,0)$ and $(n,1)$ are at distance $1$ in the Cayley graph but unbounded distance in $\Delta$, so the monoid whose existence is asserted by the theorem will have group distortion which is not even locally bounded.

The presentation in Theorem \ref{thm:Schgraphs-main} (and the examples cited above) are infinite and also far from special. One might hope that by restricting the class of presentations one can ensure that distortion is controlled, but in fact even $E$-unitary special one-relator inverse monoids can fail to have even locally bounded group distortion:

\begin{Thm}
	\label{thm:onerelatorscary}
The inverse monoid $S=\Inv\langle a,b,c,d \mid bcb^{-1}ad^{-1}a^{-1} c^{-1}cd^{-1}d=1 \rangle$ is an $E$-unitary inverse monoid without locally bounded group distortion. Indeed, every
Sch\"utzenberger graph of $S$ embeds in the Cayley graph of the maximum group image with unbounded distortion.
\end{Thm}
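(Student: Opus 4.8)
The plan is to identify the maximal group image, rewrite the single defining relation into a more transparent set of relations, deduce $E$-unitarity by running Stephen's procedure while tracking the map to the group Cayley graph, and then read off the distortion.

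Regarded as a group presentation, the relator $bcb^{-1}ad^{-1}a^{-1}c^{-1}cd^{-1}d$ freely reduces to $bcb^{-1}ad^{-1}a^{-1}$, and eliminating $d=a^{-1}bcb^{-1}a$ by a Tietze transformation shows that $G:=S/\sigma$ is free of rank $3$ on $\{a,b,c\}$; so $\Gamma(G)$, taken over $\{a,b,c,d\}$, is quasi-isometric to a tree, with the $d$-edge out of a vertex $g$ ending at $ga^{-1}bcb^{-1}a$. Next I would work inside $S$: in any inverse monoid, $xe=1$ with $e$ idempotent forces $x=e=1$ (then $e$ is the inverse of $x$, hence idempotent, so $x=e=x^{-1}$ and $x^2=1=x$). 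Applying this to $[bcb^{-1}ad^{-1}a^{-1}]_S\,[c^{-1}c]_S\,[d^{-1}d]_S=1$ (the second and third factors being commuting idempotents) and then once more shows that the single relation $w=1$ is equivalent to the three relations $bcb^{-1}ad^{-1}a^{-1}=1$, $c^{-1}c=1$, $d^{-1}d=1$. Thus $S=\Inv\langle a,b,c,d\mid bcb^{-1}ad^{-1}a^{-1}=1,\ c^{-1}c=1,\ d^{-1}d=1\rangle$; in particular $c$ and $d$ are left units of $S$.

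I would then build the Sch\"utzenberger graphs by Stephen's iterative procedure applied to this three-relator presentation, tracking the canonical label-preserving morphism to $\Gamma(G)$. The relation $c^{-1}c=1$ forces, at every vertex $p$, a ``$c^{-1}$-spike'' $p\xrightarrow{c^{-1}}pc^{-1}$, the relation $d^{-1}d=1$ a ``$d^{-1}$-spike'' $p\xrightarrow{d^{-1}}pd^{-1}$, and $bcb^{-1}ad^{-1}a^{-1}=1$ a hexagon through $p,pb,pbc,pbcb^{-1},pbcb^{-1}a,pa$ closing via the long $d$-edge $pbcb^{-1}a\xrightarrow{d^{-1}}pa$. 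The key point to check is that in each determinisation step no folding identifies two vertices with distinct images in $G$, and no two distinct vertices with equal $G$-image are left unmerged — a finite check on how these three gadgets overlap inside $\Gamma(G)$. Granting it, every $S\Gamma(u)$ embeds in $\Gamma(G)$, so $S$ is $E$-unitary, and $S\Gamma(1)$ is identified with the full subgraph $\Omega$ of $\Gamma(G)$ induced on the submonoid $M=\langle a,b,c^{-1},d^{-1},bc,bcb^{-1},bcb^{-1}a\rangle$ of $F_3$ (equivalently, the least vertex set containing $1$ and closed under the three gadgets).

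It then remains to see that the inclusion $\Omega\hookrightarrow\Gamma(G)$ has unbounded distortion, i.e.\ that $M$ is far from convex in $\Gamma(G)$. Using the exponent-sum homomorphisms $F_3\to\mathbb Z$ counting $a$'s and $b$'s one sees $M\subseteq\{g:\exp_a(g),\exp_b(g)\ge 0\}$, and more refined such arguments show $M$ misses many elements of $\Gamma(G)$ that lie on short geodesics between elements of $M$ (the relator's unusual $c^{-1}cd^{-1}d$ tail is precisely what makes $M$ this thin); one is then forced onto long detours through chains of $c^{-1}$- and $d^{-1}$-spikes and hexagons. Here I would exhibit an explicit sequence of pairs $(g_n,g_n')$ of vertices of $M$ — for instance comparing an iterated $d^{-1}$-spike chain against the short $d$-edges of $\Gamma(G)$, positioned so that every $\Gamma(G)$-geodesic from $g_n$ to $g_n'$ leaves $M$ — and estimate $d_G(g_n,g_n')\le C$ while $d_\Omega(g_n,g_n')\to\infty$. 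For a general Sch\"utzenberger graph $S\Gamma(u)$ I would invoke the structure theorem for special inverse monoids: $S\Gamma(u)$ contains, in its embedding into $\Gamma(G)$, a translate $hM$ of $M$, and choosing the distortion pair deep inside $hM$ (away from $\Lin(u)$ and the other sewn-in copies of $\Omega$) the same estimates survive by left-invariance of $d_G$. Since this works for every $\mathcal R$-class, $S$ has no locally bounded group distortion.

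I expect the main obstacle to be the combined third and fourth steps: both the $E$-unitarity statement together with the exact shape of $S\Gamma(1)$, and the construction and estimation of the distortion family, rest on an error-free account of how the $c^{-1}$-spikes, $d^{-1}$-spikes and hexagons fold together inside $\Gamma(G)$ and of which vertices of $\Gamma(G)$ actually belong to $M$ — which is exactly where the careful design of the relator is doing its work.
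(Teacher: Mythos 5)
Your setup is sound and matches the paper's: the decomposition of the single relator into $bcb^{-1}ad^{-1}a^{-1}=1$, $c^{-1}c=1$, $d^{-1}d=1$ (your lemma that $xe=1$ with $e$ idempotent forces $e=1$ is correct: multiply $xe=1$ on the right by $e$), and the identification of $S/\sigma$ with a free group of rank $3$ are both right. But the two steps that actually carry the theorem are left as plans, and the methods you propose for them do not work as stated. For $E$-unitarity, verifying that ``no folding identifies two vertices with distinct images in $G$'' is not a finite check: Stephen's procedure produces an infinite ascending chain of graphs and foldings can cascade arbitrarily far, so one cannot certify injectivity of the morphism to $\Gamma(G)$ by inspecting how the three gadgets pairwise overlap. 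The paper avoids this entirely by observing that $S$ is an idempotent-pure quotient of the one-relator inverse monoid $\Inv\langle a,b,c,d \mid bcb^{-1}ad^{-1}a^{-1}=1\rangle$, which is $E$-unitary by Ivanov--Margolis--Meakin since the relator is cyclically reduced, and that such quotients remain $E$-unitary. You should either invoke that result or supply a genuinely different proof; as written this step is a gap.

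The distortion argument has a more serious structural problem. In a special inverse monoid a copy of $S\Gamma(1)$ is sewn on at \emph{every} vertex of $S\Gamma(w)$, so there is no region of a general Sch\"utzenberger graph that is ``away from the other sewn-in copies of $\Omega$'': any candidate pair $(g_n,g_n')$ could a priori be joined by a short path running through translates $kS\Gamma(1)$ based at vertices far from $h$. Controlling all of these simultaneously is exactly the hard part, and exponent-sum maps $\FG(a,b,c)\to\mathbb Z$ counting $a$'s and $b$'s are too coarse to do it (note $a,b,c^{-1},d^{-1}$ all lie in the prefix monoid, and $d^{-1}=a^{-1}bc^{-1}b^{-1}a$ has $a$- and $b$-exponent sums zero). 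The paper's proof works in the coordinates $a,b,u$ with $u=bcb^{-1}=ada^{-1}$, shows by an exponent-counting argument on the generating set $\{a,ua,u,b,ub,b^{-1}u^{-1}b,a^{-1}u^{-1}a\}$ of reachable displacements that $\{n : u^n\in V(S\Gamma(w))\}$ is bounded below by some $m$, and then proves a separation lemma (by induction on path length and on a chosen free-reduction sequence) that \emph{every} path in $\Gamma(G)$ from $u^{m-k}b$ to $u^{m-k}a$ passes through a power of $u$; hence the $\Gamma(G)$-distance of this pair is $2$ while the $S\Gamma(w)$-distance is $2k+2$. Nothing in your proposal substitutes for either the lower bound on the $u$-axis or the separation lemma, so the ``unbounded distortion in every Sch\"utzenberger graph'' claim remains unproved.
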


\begin{proof}
	First, observe that the inverse monoid $S$ is the same as the one defined by the presentation $\Inv\langle a,b,c,d \mid bcb^{-1}ad^{-1}a^{-1}=1, c^{-1}c = 1, d^{-1}d=1 \rangle$. This is an idempotent-pure quotient of the one-relator inverse monoid $\Inv\langle a,b,c,d \mid bcb^{-1}ad^{-1}a^{-1}=1 \rangle$. The latter is $E$-unitary by \cite{IMM} since the relator is a cyclically reduced word, and a quotient of an $E$-unitary inverse monoid by a congruence contained in the minimum group congruence is easily seen to be itself $E$-unitary, so $S$ itself is $E$-unitary.
	
	The maximal group image of $S$ is $G=\Gp\langle a,b,c,d \mid bcb^{-1}ad^{-1}a^{-1}=1 \rangle$. If we introduce a new generator $u=bcb^{-1}$, then
	\begin{align*}
	G&=\Gp\langle a,b,c,d,u \mid bcb^{-1}ad^{-1}a^{-1}=1, u=bcb^{-1} \rangle\\
	&\cong\Gp\langle a,b,c,d,u \mid c=b^{-1}ub, d=a^{-1}ua  \rangle\\
	&\cong\FG( a,b,u ).
	\end{align*}

For convenience, we will identify the elements of $G$ with freely reduced words over the generators $a$, $b$, $u$ and their inverses. In particular, in the Cayley graph $\Gamma(G)$ of $G$ with respect to the generating set $a,b,c,d$, we identify vertices of $\Gamma(G)$ with $\FG( a,b,u )$, while the edges are labelled by the set $\{a,b,c,d\}$.
It is helpful to keep in mind this picture depicting the relations in $G$:
\begin{center}
	\begin{tikzpicture}[line width=0.8pt, >=stealth, scale=1.5]
	\foreach \i in {1, 2} {
		\foreach \j in {0, 1, 2} {
			\node (a\i\j) at (\i, -\j) {};
		}
	}
	
	\foreach \i in {1} {
		\draw[->] (a\i0) -- ++(1, 0) node[midway, above] {$c$};
		\draw[->] (a\i2) -- ++(1, 0) node[midway, below] {$d$};
		\draw[->] (a\i1) -- ++(1,0) node[midway, above] {$u$};
	}
	
	\foreach \i in {1,2} {
		\draw[->] (a\i1) -- ++(0, -1) node[midway, right] {$a$};
		\draw[->] (a\i1) -- ++(0, 1) node[midway, right] {$b$};
		
		
		\fill (1, -1) circle (1pt) node[below left] {};
	}

\end{tikzpicture}
\end{center}

The Cayley graph $\Gamma(G)$ is obtained from the Cayley graph of $\FG( a,b,u )$ (with respect to the generating set $a,b,u$) by replacing every $u$-edge with the surrounding rectangle and determinising. In particular, observe that the black edges of the graph below form a subgraph of $\Gamma(G)$:

\begin{center}
	\begin{tikzpicture}[line width=0.8pt, >=stealth, scale=1.5]
		\foreach \i in {-2,-1,0, 1, 2, 3,4} {
			\foreach \j in {0, 1, 2} {
				\node (a\i\j) at (\i, -\j) {};
			}
		}
		
		\foreach \i in {-2,..., 3} {
			\draw[->] (a\i0) -- ++(1, 0) node[midway, above] {$c$};
			\draw[->] (a\i2) -- ++(1, 0) node[midway, below] {$d$};
		}
		
		\foreach \i in {-2,..., 4} {
			\draw[->] (a\i1) -- ++(0, -1) node[midway, right] {$a$};
			\draw[->] (a\i1) -- ++(0, 1) node[midway, right] {$b$};
		}
		
			\foreach \i in {-2,..., 3} {
				\draw[->, dashed, teal] (a\i1) -- ++(1,0) node[midway, above, teal] {$u$};
			}
		
		\node at (4.5, 0) {$\cdots$};
		\node at (-2.5, 0) {$\cdots$};
		\node at (4.5, -1) {$\cdots$};
		\node at (-2.5, -1) {$\cdots$};
		\node at (-2.5, -2) {$\cdots$};
		\node at (4.5, -2) {$\cdots$};

	\end{tikzpicture}
\end{center}

Let $S\Gamma(w)$ be any Schützenberger graph of $S$ (with respect to the generators $a,b,c,d$) -- we view this as a subgraph of the Cayley graph of $\Gamma(G)$.

The loop associated to the relator $bcb^{-1}ad^{-1}a^{-1} c^{-1}cd^{-1}d$ spans the following graph in $\Gamma(G)$: 

\begin{center}
	\begin{tikzpicture}[line width=0.8pt, >=stealth, scale=1.5]
	\foreach \i in {1, 2} {
		\foreach \j in {0, 1, 2} {
			\node (a\i\j) at (\i, -\j) {};
		}
	}
	
	\foreach \i in {1} {
		\draw[->] (a\i0) -- ++(1, 0) node[midway, above] {$c$};
		\draw[->] (a\i2) -- ++(1, 0) node[midway, below] {$d$};
	}
	
	\foreach \i in {1,2} {
		\draw[->] (a\i1) -- ++(0, -1) node[midway, right] {$a$};
		\draw[->] (a\i1) -- ++(0, 1) node[midway, right] {$b$};
	}
	
	\draw[->] (0.15,-0.5) -- (1, -1) node[midway, above] {$c$};
	\draw[->] (0.15,-1.5) -- (1, -1) node[midway, below] {$d$};
	
	\fill (1, -1) circle (1pt) node[below left] {};
\end{tikzpicture}
\end{center}

In particular, $S\Gamma(w)$ contains the graph below as a subgraph at every vertex $v$.

\begin{center}
	\begin{tikzpicture}[line width=0.8pt, >=stealth, scale=1.5]
	\foreach \i in {-2,-1,0, 1, 2, 3,4} {
		\foreach \j in {0, 1, 2} {
			\node (a\i\j) at (\i, -\j) {};
		}
	}
	
	\foreach \i in {-2,..., 3} {
		\draw[->] (a\i0) -- ++(1, 0) node[midway, above] {$c$};
		\draw[->] (a\i2) -- ++(1, 0) node[midway, below] {$d$};
	}
	
	\foreach \i in {1,..., 4} {
		\draw[->] (a\i1) -- ++(0, -1) node[midway, right] {$a$};
		\draw[->] (a\i1) -- ++(0, 1) node[midway, right] {$b$};
	}
	
	\node at (4.5, 0) {$\cdots$};
	\node at (-2.5, 0) {$\cdots$};
	\node at (4.5, -1) {$\cdots$};
	\node at (-2.5, -2) {$\cdots$};
	\node at (4.5, -2) {$\cdots$};
	
	\fill (1, -1) circle (1.5pt) node[below left] {$v$};
\end{tikzpicture}
\end{center}

We furthermore claim that there exists some vertex $v$ of $S\Gamma(w)$ such that no path with label of the form $bc^{-k}b^{-1}$, or $ad^{-k}a^{-1}$ $(k \in \mathbb N)$ is readable from $v$ in $S\Gamma(w)$. Note that $[bc^{-k}b^{-1}]_G=[ad^{-k}a^{-1}]_G=u^{-k}$, so it suffices to show that there exists some vertex $v$ of $\Gamma$ such that $vu^{-k} \notin V(S\Gamma(w))$ for any positive $k$.

Let $w_1, \ldots, w_n \in G$ be the vertices traversed by $w$ and define $W = \lbrace w_1, \dots w_n \rbrace$. Then for any $v \in V(S\Gamma(w))$, there is a sequence $v_1, \ldots, v_k=v$ of vertices such that $v_1 \in W$, and $v_{i+1}$ is a vertex of the subgraph of $\Gamma_G$ obtained by sewing on the relator at $v_i$. That is, in $G$, we have
$$v_{i+1} \in v_i\{a, ad, ada^{-1}, b, bc, c^{-1}, d^{-1}\}.$$ 
Rewriting in terms of the generators $a,b,u$, we have
$$v_{i+1} \in v_i\{a, ua, u, b, ub, b^{-1}u^{-1}b, a^{-1}u^{-1}a\}.$$ 
Let $k_i$ denote the number of occurrences of $u^{-1}$ in $w_i$ (viewed as a reduced word in $\FG(a,b,u)$) and let $K=-\max{k_i}$.

Assume $u^{k} \in V(S\Gamma(w))$ for some $k \leq 0$, we claim that then $k \geq K$. Then we have 
$$u^{k} \in W\{a, ua, u, b, ub, b^{-1}u^{-1}b, a^{-1}u^{-1}a\}^*.$$
Consider a product $w_i \hat w$ with $\hat w \in \{a, ua, u, b, ub, b^{-1}u^{-1}b, a^{-1}u^{-1}a\}^*$
freely reducing to $u^k$.
Notice that any occurrence of $u^{-1}$ in $w_i \hat w$ is either in $w_i$, or in a factor of $\hat w$ of the form $b^{-1}u^{-1}b$ or $a^{-1}u^{-1}a$. Thus if $k < K \leq -k_i$, then $\hat w$ must contain either a $b^{-1}u^{-1}b$- or an $a^{-1}u^{-1}a$-factor where the middle $u^{-1}$ is not removed in some reduction sequence of $w_i\hat w$ to a freely reduced word. Assume it is of the form of $b^{-1}u^{-1}b$, the other case being similar. Consider the subword of $\hat w$ following the $u^{-1}$: since the product $w_i\hat w$ freely reduces to $u^k$, this subword must also freely reduce to a power of $u$. But observe the sum of $b$-exponents in any word in $\{a, ua, u, b, ub, b^{-1}u^{-1}b, a^{-1}u^{-1}a\}^*$ is nonnegative, therefore the sum of $b$-exponents in the subword following the $u^{-1}$ is positive, which is a contradiction.

Now let $m=\min\{k\mid u^k \in V(S\Gamma(w))\} \geq K$, and put $v=u^m$. Then $u^{m-k} \notin V(S\Gamma(w))$ for any $k \geq 1$, which establishes the claim that there is a vertex $v$ such that no path of the form $bc^{-k}b^{-1}$, or $ad^{-k}a^{-1}$ $(k \in \mathbb N)$ is readable from $v$ in $S\Gamma(w)$.

Denote the path metric on $S\Gamma(w)$ by $d$, and the word metric on $G$ with respect to the generating set $\{a,b,c,d\}$ by $d_G$. For $k \geq 1$ define $u_k:=[vbc^{-k} ]_G=u^{m-k}b$ and $u_k':=[vad^{-k}]_G=u^{m-k}a$; it is easily seen that the elements $vbc^{-k}$ and $vad^{-k}$ are right-invertible in $S$ and it follows that $u_k$ and $u_k'$ are vertices of $S\Gamma(w)$. Observe that $u_kb^{-1}a=u_k'$, so $d_G(u_k, u_k')\leq 2$. We claim that $d(u_k, u_k')= 2k+2$, which would imply the that the embedding of $S\Gamma(w)$ has unbounded distortion. Indeed, we shall show that every $u_k \to u_k'$ path in $\Gamma(G)$ contains a vertex $u^n$ for some $n$. It is then clear that the shortest such path contained in $S\Gamma(w)$ is the one through $v$ with length $2k+2$.

For any path $p$ in $\Gamma(G)$, let $\ell(p)$ denote the label of $p$ as a word in the alphabet $\{a,b,c,d\}^{\pm 1}$, and let $\ell_u(p)$ be the word obtained from this label by replacing any occurrences $c^{\pm 1}$ and $d ^{\pm 1}$ with $b^{-1}u^{\pm 1}b$ and $a^{-1}u^{\pm 1}a$ respectively, that is, 
$$\ell_u(p) \in \{a,b,b^{-1}ub,a^{-1}ua,a^{-1}, b^{-1}, b^{-1}u^{-1}b, a^{-1}u^{-1}a\}^\ast.$$
Now suppose $p$ is a path from  $v_k \to v_k'$ in $\Gamma(G)$; then $\ell_u(p)$ must freely reduce to $b^{-1}a$. Furthermore, $p$ traverses a vertex $u^n \in \Gamma(G)$ if and only if $p$ has a prefix $p'$ such that $\ell_u(p')$ freely reduces to $v_k^{-1}u^n=b^{-1}u^{k-m+n}$, thus our goal is to show that such prefix exists. (Notice that not all prefixes of $\ell_u(p)$ come from prefixes of $\ell(p)$, or equivalently of $p$, so this claim is stronger than simply requiring $\ell_u(p)$ to have a prefix freely reducing to $b^{-1}u^{k-m+n}$.)

We prove the following stronger statement: if $q$ be a path in $\Gamma(G)$ such that $\ell_u(q)$ freely reduces to a word of the form $b^{-1}u^la$, then $q$ has a prefix $q'$ such that $\ell_u(q')$ freely reduces to a word of the form $b^{-1}u^n$. Note that the latter condition is equivalent to requiring that $q$ has a suffix $q''$ with $\ell_u(q'')$ freely reducing to a word of the form $u^{n'}a$. We prove the claim by induction on the length of the path $q$. The statement is obviously true when $\ell(q)=b^{-1}a$, which is the shortest path satisfying the conditions. Now let $q$ be a path such that $\ell_u(q)$ freely reduces to $b^{-1}u^la$ for some $l \geq 1$ and assume the statement is true for all shorter paths of this form. 

Fix some reduction sequence of $\ell_u(q)$ to a freely reduced word. Then we may factorise $\ell_u(q)$ as $w_1 \cdot b^{-1} \cdot w_2 \cdot a \cdot w_3$ where $w_1$ and $w_3$ freely reduce to $1$ and $w_2$ freely reduces to $u^l$. Consider the prefix $w_1 b^{-1}$. If this prefix is the label of some prefix $q'$ of the path $q$ then since it freely reduces to $b^{-1}$ we are done. Dually, if the suffix $a w_3$ is the label
of some suffix $q''$ then the complementary prefix $q'$ has label freely reducing to $b^{-1} u^l$ and we are done. Otherwise, the $b^{-1}$ ending the prefix $w_1 b^{-1}$ and the $a$ beginning the suffix $a w_3$ must come respectively from edges in $q$ labelled $b^{-1} u^{\epsilon_1} b$ and $a^{-1} u^{\epsilon_2}  a$ for some $\epsilon_1, \epsilon_2 \in \lbrace 1, -1 \rbrace$. In this case
$$\ell_u(q)=w_1 \cdot b^{-1} \underbrace{ u^{\epsilon_1} b \cdot  \hat w \cdot a^{-1} u^{\epsilon_2}  }_{w_2} a \cdot w_3.$$
Since $w_2$ freely reduces to $u^l$, $\hat w$ must freely reduce to $b^{-1}u^{l-\epsilon_1-\epsilon_2}a$. Observe that $\hat{w}$ is the label of a subpath of $q$ which satisfies the conditions of the claim and is shorter than $q$. So by the inductive hypothesis there exists a prefix $\hat q'$ of $\hat q$ such that $\ell_u(\hat q')$ freely reduces to a word of the form $b^{-1}u^n$. Let now $q'$ be the prefix of $q$ with 
$$\ell_u(q')=w_1 \cdot b^{-1}u^{\epsilon_1}b \cdot \ell_u(\hat q'),$$
which we can reduce freely as $w_1 \cdot b^{-1}u^{\epsilon_1}b \cdot \ell_u(\hat q')  \to b^{-1}u^{\epsilon_1}b \cdot b^{-1}u^n \to b^{-1}u^{\epsilon_1+n}$, proving the claim. Applying the claim to the path $p$ completes the proof of the theorem.
\end{proof}

We have seen examples of unbounded group distortion; this naturally raises the question of when the distortion \emph{is} bounded, either locally or uniformly.  Our next aim is to explore the extent to which bounded distortion can be
characterised as an algebraic property of the semigroup, related to being $F$-inverse. In an $F$-inverse monoid we write $s^\m$ for the greatest element in the $\sigma$-class of an element $s$. Notice that if $S$ is $F$-inverse and equipped with a
proper length function, then $l_S(s)\leq l_S(s^\m)$ for every $s \in S$.

It was observed in \cite[Proposition 5.2.12]{DMthesis} (in the language of coarse embeddings) that $F$-inverse monoids have uniformly bounded group distortion. In fact, the following result exactly characterises both locally and uniformly bounded group distortion as  algebraic properties
which can be seen as weak versions of being $F$-inverse.
\begin{Thm}
	\label{thm:uniformcoarse}
\begin{enumerate}
	\item Let $S$ be any quasi-countable inverse monoid. Then $S$ has uniformly bounded group distortion if and only if every $\sigma$-class contains finitely many maximal elements, and every ascending chain is bounded above.
	\item Let $S$ be any quasi-countable inverse semigroup. Then $S$ has uniformly bounded group distortion if and only if every $\sigma$-class contains finitely many maximal non-idempotent elements, and every ascending chain of non-idempotent elements is bounded above.
\end{enumerate}	
\end{Thm}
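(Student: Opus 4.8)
The plan is to prove both parts together, deriving the semigroup case from the monoid-style argument applied to $\mathcal{D}$-classes, and to work throughout with proper length functions via Proposition~\ref{prop:length-function-distortion}. I would fix a proper length function $l_S$ on $S$ and a proper length function $l_G$ on $G = S/\sigma$, and recall that uniformly bounded group distortion is equivalent to the existence of a single function $\phi$ with $l_S(s) \le \phi(l_G(s\sigma))$ for every non-idempotent $s$ (for the monoid case, every $s$), uniformly over $\mathcal{D}$-classes.

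\medskip
\noindent\textbf{Sufficiency.} Suppose every $\sigma$-class has finitely many maximal (non-idempotent) elements and every ascending chain of (non-idempotent) elements is bounded above. The first step is a Zorn/König-type argument: the two hypotheses together force that every (non-idempotent) element lies below one of the finitely many maximal elements of its $\sigma$-class, and hence each $\sigma$-class $g$ has a finite set $M_g$ of maximal elements such that every $s$ with $s\sigma = g$ satisfies $s \le m$ for some $m \in M_g$, whence $l_S(s) \le l_S(m)$. Next, I want to bound $\max\{l_S(m) : m \in M_g\}$ by a function of $l_G(g)$ only. Here I would invoke properness of $l_S$: the set $C_r = \{s \in S : 0 < l_S(s) \le r\}$ is bounded above by a finite set $F_r$ in the natural partial order, so every element of $C_r$ lies below some element of $F_r$. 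The maximal non-idempotent elements $m$ are themselves maximal, so if $m$ is bounded above by $f \in F_r$ then $m = f$ (modulo the idempotent issue), meaning there are only finitely many maximal non-idempotent elements with $l_S$-value at most $r$; contrapositively, if $g$ ranges over $\sigma$-classes with $l_G(g) \le k$ then $\sigma^{-1}(g)$ meets only boundedly many of these maximal elements --- but I actually need the quantitative direction, so instead I would argue: let $\phi(k) = \max\{ l_S(m) : m \text{ maximal non-idempotent}, l_G(m\sigma) \le k \}$, and show this maximum is over a finite set. Since $l_G$ is proper, there are finitely many $g$ with $l_G(g)\le k$, and for each such $g$ the set $M_g$ is finite by hypothesis; so $\phi(k)$ is a finite maximum and well-defined, giving $l_S(s) \le l_S(m) \le \phi(l_G(g)) = \phi(l_G(s\sigma))$ for every non-idempotent $s$ with $s\sigma = g$. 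Then Proposition~\ref{prop:length-function-distortion} yields uniformly bounded group distortion.

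\medskip
\noindent\textbf{Necessity.} Conversely, suppose $S$ has uniformly bounded group distortion, with $\phi$ the (increasing) bounding function from Proposition~\ref{prop:length-function-distortion}, so $l_S(s) \le \phi(l_G(s\sigma))$ for all non-idempotent $s$. Fix a $\sigma$-class $g$, non-idempotent, and set $R = \phi(l_G(g))$. Then every non-idempotent $s$ with $s\sigma = g$ satisfies $0 < l_S(s) \le R$, i.e. lies in $C_R$, which by properness of $l_S$ is contained in $FE$ for some finite $F$. Hence every non-idempotent element of the $\sigma$-class lies below some element of $F$, so there are at most $|F|$ maximal non-idempotent elements (each such element equals the element of $F$ above it), and every ascending chain of non-idempotent elements within the class lies in the finite-height poset $C_R \cap \sigma^{-1}(g)$ --- more carefully, any ascending chain is contained in finitely many $\le$-classes bounded by $F$, hence is bounded above. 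This handles the semigroup case (2); for the monoid case (1), the identity element $1$ is the maximum of its own $\sigma$-class and the idempotent exclusions become vacuous for the other classes except that one must also confirm the $\sigma$-class of idempotents (the class mapping to $1 \in G$ in the $E$-unitary picture, or in general $\ker\sigma$) behaves correctly --- but in the monoid case $E(S)$ has maximum $1$, so "finitely many maximal elements, chains bounded" holds there automatically, which is why (1) can drop the non-idempotent qualifiers.

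\medskip
\noindent The step I expect to be the main obstacle is the passage between "every ascending chain is bounded above" and "every element lies below a maximal element," together with ensuring the finite set $M_g$ of maximal elements genuinely dominates the whole $\sigma$-class; this requires a Zorn's lemma argument that is only valid because the chain-boundedness hypothesis is in force, and one must be careful that boundedness of \emph{chains} (rather than of arbitrary subsets) suffices --- it does, precisely by Zorn applied within the $\sigma$-class. A secondary subtlety is bookkeeping the idempotent/non-idempotent distinction so that the semigroup statement (2) specialises correctly to the monoid statement (1), and making sure that in part (2) an ascending chain all of whose terms are non-idempotent cannot "escape to" an idempotent bound in a way that breaks the argument --- it cannot, since $l_S$ is bounded on the chain and properness then confines it.
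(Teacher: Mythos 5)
Your proposal is correct and follows essentially the same route as the paper: reduce (1) to (2) using that $1$ dominates all idempotents, use Zorn on the chain condition to place every non-idempotent below one of the finitely many maximal elements and define $\phi$ as a maximum over the finitely many $\sigma$-classes in each ball of $G$, and for the converse use properness of $l_S$ to bound each $\sigma$-class by a finite set $F$. The brief false start in the sufficiency direction (invoking properness of $l_S$ before switching to properness of $l_G$) is self-corrected and does not affect the argument.
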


\begin{proof}
First notice that in the monoid case the idempotents are bounded above by $1$, which is therefore the only idempotent element which could be maximal in any $\sigma$-class. Hence, every $\sigma$-class contains finitely many maximal elements if and only every $\sigma$-class contains finitely many maximal non-idempotent elements. Moreover, every ascending chain either contains infinitely many idempotents (in which case it is bounded above by $1$) or does not (in which case it is bounded above if and only if the subsequence of non-idempotents is bounded above); hence, every ascending chain of elements is bounded if and only if every ascending chain of non-idempotent elements is bounded. It follows that it suffices to prove the semigroup case.

Let $G$ denote the group $S/\sigma$, and let $l_S$, $l_G$ be proper length functions on $S$ and $G$
respectively. First assume that every $\sigma$-class contains finitely many non-idempotent maximal elements, and that every ascending chain of non-idempotents is bounded above within the semigroup.
For each $g \in G \setminus \lbrace 1 \rbrace$, let $m_1^g, \ldots, m_{k_g}^g$ be the non-idempotent maximal elements of $S$ in the $\sigma$-class $g$. Since $G$ has bounded geometry we may define
$$\phi(r) = \max\{l_S(m_i^g): 1 \leq i \leq k_g,\ l_G(g)\leq r\}.$$
Since every ascending chain of non-idempotents is bounded above, every non-idempotent $s \in S$ is below some maximal element $m_s \in S$. Furthermore the idempotents are downward-closed in the natural partial order, so $m_s$ is necessarily non-idempotent and hence equal to
$m_i^{s\sigma}$ for some $i$, whereupon
$$l_S(s) \leq l_S(m_i^{s\sigma}) \leq \phi(l_G(s\sigma)).$$
On the other hand, if $s \in S$ is idempotent then
$$l_S(s) = 0 \leq \phi(0) = \phi(l_G(s\sigma)).$$
So by Proposition \ref{prop:length-function-distortion}, the group distortion is uniformly bounded by $\phi$.

Conversely assume $S$ has uniformly bounded group distortion, that is, that there is a function $\phi$ with $l_S(s) \leq \phi(l_G(s\sigma))$ for all $s \in S$. 
Choose any $g \in G$, and put $k=l_G(g)$. Then $l_S(s) \leq \phi(k)$ for all $s \in S$ with
$s \sigma = g$. Since the length function $l_S$ is proper, there is a finite subset $F \subseteq S$ such
that every non-idempotent element of $S$ of length at most $\phi(k)$ lies below an element of $F$. Since
idempotents are downward-closed in the natural partial order, we may assume $F$ contains no idempotents. Now 
since $\sigma$-classes are upward-closed, it follows that the non-idempotents
of $\sigma^{-1}(g)$ are bounded above by the finite subset $\sigma^{-1}(g) \cap F$, which is the set of non-idempotent maximal elements in  $\sigma^{-1}(g)$.
Moreover, any ascending chain of non-idempotents is contained in a $\sigma$-class, and so is bounded above by an element of the set $F$ as defined above.
\end{proof}

We can say even more in the case of $E$-unitary special inverse monoids. Given such a monoid $S=\Inv\langle X \mid w_i=1\ (i \in I) \rangle$, the vertices of the Sch\"utzenberger graph of $1$ inside $\Gamma(G)$ form a submonoid of $G$, called the \emph{prefix monoid} $P$. It is generated by all the prefixes of the relators $w_i$, and by \cite{IMM}, if $G$ has a solvable word problem, and $P$ has a decidable membership problem in $G$, then $S$ has a solvable word problem. 

We now turn our attention to $E$-unitary, special inverse monoids.
We shall need the following observation, which is likely to be of independent interest.
\begin{Prop}
\label{prop:fub-is-Finverse}
If $S$ is an $E$-unitary, special inverse monoid and $s, t\in S$ are $\sigma$-related, then they have a common upper bound in the natural partial order.
\end{Prop}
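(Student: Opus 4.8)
The plan is to pass to Sch\"utzenberger graphs. Write $s=[u]_S$, $t=[v]_S$ and put $g=[u]_G=[v]_G$, so $s\sigma=t\sigma=g$; since $S$ is $E$-unitary, Lemma~\ref{eunitaryschutz} lets me regard $S\Gamma(s)$ and $S\Gamma(t)$ as subgraphs of $\Gamma(G)$, each containing the vertex $1$ (the image of the relevant idempotent) and the vertex $g$ (the image of the other root). I would first record a reformulation: by Stephen's theorem (that $z$ labels an $\alpha\to\beta$ path in $S\Gamma(w)$ iff $[z]_S\ge[w]_S$), together with the description of Sch\"utzenberger graphs of special monoids as $P$-completions of linear graphs, an element $r$ with $r\sigma=g$ satisfies $r\ge s$ exactly when $S\Gamma(r)\subseteq S\Gamma(s)$, and likewise for $t$. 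Conversely, any word $z$ labelling a path from $1$ to $g$ inside $S\Gamma(s)\cap S\Gamma(t)$ yields $r=[z]_S$ with $r\sigma=g$ and $S\Gamma(r)\subseteq S\Gamma(s)\cap S\Gamma(t)$ — here one uses that, for a special presentation, $P$-expansion merely attaches relator loops at vertices, so $S\Gamma(s)\cap S\Gamma(t)$ is again $P$-complete and thus equals its own $P$-completion — whence $r\ge s$ and $r\ge t$. So it suffices to produce a path from $1$ to $g$ lying inside $S\Gamma(s)\cap S\Gamma(t)$.

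Next I would build such a path from a van Kampen diagram. Since $[u]_G=[v]_G$, the word $uv^{-1}$ is trivial in $G=\Gp\langle X\mid w_i=1\rangle$, so it bounds a van Kampen diagram $D$ over this presentation. Sending the basepoint of $D$ to $1$ gives a label-preserving map from the $1$-skeleton of $D$ into $\Gamma(G)$ under which the two boundary arcs of $D$ are carried onto the paths $P_u$ and $P_v$ spelling $u$ and $v$ from $1$; these lie in $S\Gamma(s)$ and $S\Gamma(t)$ respectively. The key step is to show the entire image of $D$ lies in $S\Gamma(s)\cap S\Gamma(t)$. For $S\Gamma(s)$: it is $P$-complete and already contains the image of the $u$-arc, and every $2$-cell of $D$ is a relator polygon; attaching the cells of $D$ one at a time, each along an edge whose image already lies in $S\Gamma(s)$, $P$-completeness forces the whole relator loop of that cell — hence all of its vertices and edges — into $S\Gamma(s)$. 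Running the same induction from the $v$-arc places the image of $D$ inside $S\Gamma(t)$ as well. Then $\mathrm{image}(D)$ is a connected subgraph of $\Gamma(G)$ contained in $S\Gamma(s)\cap S\Gamma(t)$ and containing both $1$ and $g$, and reading the $u$-arc of $\partial D$ gives the desired path; the element of $S$ it spells is a common upper bound of $s$ and $t$.

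The hard part will be making that inductive step honest. Being $P$-complete, $S\Gamma(s)$ carries at each of its vertices the loop spelling the relator $w_i$ in the prescribed way, whereas a $2$-cell of $D$ read from an arbitrary boundary vertex spells only a cyclic conjugate of $w_i$ — and a $P$-complete subgraph of $\Gamma(G)$ need not contain cyclic-conjugate relator loops. So I would not take $D$ as given, but construct it hand in hand with Stephen's iterative construction of $S\Gamma(s)$ (and of $S\Gamma(t)$): each new cell should be glued and based at a vertex already placed in $S\Gamma(s)$, so that reading $w_i$ from that base vertex runs along the attaching edge. Carrying out this bookkeeping simultaneously for $s$ and for $t$ is the real content of the proof; everything else (the reformulation, $P$-completeness of the intersection, and the passage back from a path to a common upper bound) is routine given the machinery already in place.
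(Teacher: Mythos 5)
Your reduction is fine: by Stephen's theorem, any word labelling a $1\to g$ path lying simultaneously in the embedded copies of $S\Gamma(s)$ and $S\Gamma(t)$ inside $\Gamma(G)$ represents a common upper bound of $s$ and $t$ (you do not even need $P$-completeness of the intersection for this). But the heart of the argument --- producing such a path --- is exactly where the proof stops. You correctly diagnose the obstruction yourself: a $2$-cell of a van Kampen diagram for $uv^{-1}$, attached along an edge already known to lie in $S\Gamma(s)$, reads only a \emph{cyclic conjugate} of a relator from that edge's endpoints, and $P$-completeness of $S\Gamma(s)$ guarantees a $w_i$-loop at every vertex but not a $(qp)$-loop at every vertex when $w_i=pq$. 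Your proposed repair --- building the diagram ``hand in hand'' with Stephen's procedure so that each cell is attached at a vertex already placed --- is not an argument: the diagram is constrained by the identity $[uv^{-1}]_G=1$, there is no reason a shelling exists in which every cell's $w_i$-basepoint is reached before the cell is processed (the basepoint may be an interior vertex accessible only through cells not yet placed), and even if such a shelling existed for the induction starting from the $u$-arc, you would need the \emph{same} diagram to admit one starting from the $v$-arc. Since you explicitly concede that this bookkeeping ``is the real content of the proof'' and do not supply it, the proof is incomplete at its essential step.

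For comparison, the paper avoids the Cayley-graph/van Kampen picture entirely. It forms the wedge $S\Gamma(s)\vee S\Gamma(t^{-1})$ by identifying the vertex $s$ with the vertex $t^{-1}t$; because the presentation is special, this wedge is already $P$-complete, so its determinisation is $S\Gamma(st^{-1})$. Since $s\mathrel{\sigma}t$ and $S$ is $E$-unitary, $st^{-1}$ is idempotent, which forces the determinisation to identify the vertex $ss^{-1}$ with the vertex $t^{-1}$; an analysis of the determinisation kernel (two vertices are identified exactly when a common word labels paths to them from the wedge point) then hands you a single word $w$ with $[w^{-1}]_S\geq s$ and $[w]_S^{-1}\geq t$. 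That kernel analysis is the substitute for the inductive bookkeeping you were unable to carry out, and it is where the specialness and $E$-unitarity are actually used. If you want to rescue your approach, you would need to prove an analogue of that statement in your setting; as written, the claim that the image of $D$ lies in $S\Gamma(s)\cap S\Gamma(t)$ is unsupported.
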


\begin{proof}
Consider the Sch\"utzenberger graphs $S\Gamma(s)$, $S\Gamma(t^{-1})$ and $S\Gamma(st^{-1})$. Let $S\Gamma(s) \vee S\Gamma(t^{-1})$ be the graph obtained from $S\Gamma(s) \sqcup S\Gamma(t^{-1})$ by identifying the vertex $s$ of $S\Gamma(s)$ with the vertex $t^{-1}t$ of $S\Gamma(t^{-1})$, and denote the identified vertex in the quotient by $z$. Note that there is a graph morphism $S\Gamma(s) \vee S\Gamma(t^{-1}) \to S\Gamma(st^{-1})$.
Furthermore, since $S$ is special, $S\Gamma(s) \vee S\Gamma(t^{-1})$ is $P$-complete. So determinizing $S\Gamma(s) \vee S\Gamma(t^{-1})$ results in a deterministic, $P$-complete graph, which must therefore be equal to $S\Gamma(st^{-1})$. 

Denote the determinisation map on the vertices by  $f\colon V(S\Gamma(s) \vee S\Gamma(t^{-1})) \to V(S\Gamma(st^{-1}))$. Our aim is to describe when two vertices are identified by $f$. Consider the relation $\sim$ on $V(S\Gamma(s) \vee S\Gamma(t^{-1}))$, where for $u \in V(S\Gamma(s)), v\in V(S\Gamma(t^{-1}))$, we define $u \sim v$ if there are is a path $z \xrightarrow{w} u$ in $S\Gamma(s)$, and $z \xrightarrow{w} v$ in $S\Gamma(t)$ for some word $w \in (X \cup X^{-1})^\ast$, and take the symmetric and reflexive closure.

We claim that $\ker f$ coincides with $\sim$. First observe that $\sim$ is contained in $\ker f$ by definition. Also
by definition, $\ker f$ is the least equivalence relation on its domain which gives a deterministic quotient, so for the
reverse inclusion is suffices to show that $\sim$ is a equivalence relation with deterministic quotient. It is clear from
the definition that $\sim$ is reflexive and symmetric, so to prove that it is a equivalence relation we need only prove that it
is transitive. Suppose $u_1 \sim u_2 \sim u_3$ for some vertices $u_1, u_2, u_3$ of $S\Gamma(s) \vee S\Gamma(t^{-1})$. Then in particular $f(u_1)=f(u_2)=f(u_3)$. By the pigeonhole principle, at least two of the three vertices must lie either in $S\Gamma(s)$ or in $S\Gamma(t^{-1})$. But since $S$ is $E$-unitary, $f$ must be injective on both $S\Gamma(s)$ and $S\Gamma(t^{-1})$, so two of the three vertices $u_1,u_2,u_3$ must coincide. Transitivity then follows.
 
It remains to show that  $S\Gamma(s) \vee S\Gamma(t^{-1})/\sim$ is deterministic. Let $u_1, v_1, u_2, v_2$ be vertices of $S\Gamma(s) \vee S\Gamma(t^{-1})$ with edges $u_1 \xrightarrow{x} v_1$ and $u_2 \xrightarrow{x} v_2$ such that $u_1 \sim u_2$. We need to show that $v_1 \sim v_2$.
If $u_1=u_2$, then we must either have $v_1=v_2$, or $u_1=u_2=z$ and $v_1 \in V(S\Gamma(s))$, $v_1 \in V(S\Gamma(t^{-1}))$, or vice versa. In either case, we must have $v_1 \sim v_2$ by definition. Otherwise, without loss of generality, we may assume that $u_1 \in V(S\Gamma(s))$, $u_2 \in V(S\Gamma(t^{-1}))$, and there is a word $w \in (X \cup X^{-1})^\ast$ with paths $z \xrightarrow{w} u_1$ and $z \xrightarrow{w} u_2$ lying in $S\Gamma(s)$ and $S\Gamma(t^{-1})$ respectively. If $u_1 =z$, then, because $S$ is $E$-unitary and $z \xrightarrow{w} u_1$ is a path in a Sch\"utzenberger graph, we must have that $w_S$ is idempotent, and $z \xrightarrow{w} u_2$ is also a cycle, making $u_1=u_2$ -- a contradiction. So we must have $u_1\neq z$, and similarly $u_2 \neq z$. It follows that $u_1 \xrightarrow{x} v_1$ is contained in $S\Gamma(s)$ and $u_2 \xrightarrow{x} v_2$ contained in $S\Gamma(t^{-1})$. But then there are paths $z \xrightarrow{wx} v_1$ in $S\Gamma(s)$ and $z \xrightarrow{wx} v_2$ in $S\Gamma(t^{-1})$, so $v_1 \sim v_2$ indeed. This completes the proof that $S\Gamma(s) \vee S\Gamma(t^{-1})/\sim$ is deterministic,
establishing the claim that $\sim$ coincides with $\ker f$.

Since $s \mathrel{\sigma} t$ and $S$ is $E$-unitary, $st^{-1}$ must be idempotent, in particular, $f$ identifies the vertex $ss^{-1}$ of $\Gamma(s)$ and the vertex $t$ of $S\Gamma(t^{-1})$, so there must be paths $z \xrightarrow{w} ss^{-1}$ in $S\Gamma(s)$, and $z \xrightarrow{w} t^{-1}$ in $S\Gamma(t^{-1})$ for some word $w \in (X\cup X^{-1})^\ast$. In particular, 
$w^{-1}$ labels a path in $S\Gamma(s)$ from $ss^{-1}$ to $s$ and hence $[w^{-1}]_S=[w]_S^{-1} \geq s$, and similarly
$w$ labels a path in $S\Gamma(t^{-1})$ from $tt^{-1}$ to $t^{-1}$ and hence $[w]_S \geq t^{-1}$ and so $[w]_S^{-1} \geq t$. This shows that $[w]_S^{-1}$ is a common upper bound for $s$ and $t$ as required.
\end{proof}

\begin{Thm}
	\label{specEunit}
A quasi-countable, $E$-unitary, special inverse monoid has uniformly bounded group distortion if and only if it is $F$-inverse.
\end{Thm}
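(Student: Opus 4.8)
The plan is to prove both implications, with Proposition~\ref{prop:fub-is-Finverse} doing most of the structural work in one direction and Theorem~\ref{thm:uniformcoarse} in the other.

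First I would handle the easy direction: if $S$ is $F$-inverse then it has uniformly bounded group distortion. This follows immediately from Theorem~\ref{thm:uniformcoarse}(1), since in an $F$-inverse monoid every $\sigma$-class has a (unique) maximum element $s^{\m}$, so certainly finitely many maximal elements, and every ascending chain is bounded above by that maximum. (No specialness is needed here; this is just the observation already attributed to \cite[Proposition 5.2.12]{DMthesis}.)

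For the converse, suppose $S$ is a quasi-countable, $E$-unitary, special inverse monoid with uniformly bounded group distortion; I want to show each $\sigma$-class has a maximum element. Fix a $\sigma$-class and pick any $s$ in it. By Theorem~\ref{thm:uniformcoarse}(1), the $\sigma$-class contains finitely many maximal elements, say $m_1, \dots, m_n$, and every element of the class lies below one of them (using that ascending chains are bounded above, plus the fact that upward-closed $\sigma$-classes together with Zorn-type reasoning push every element up to a maximal one). By Proposition~\ref{prop:fub-is-Finverse}, any two $\sigma$-related elements have a common upper bound; applying this iteratively (the common upper bound of $m_i$ and $m_j$ is again in the $\sigma$-class and lies above both, hence equals some $m_k$, forcing $m_i, m_j \leq m_k$) I would show the finite set $\{m_1,\dots,m_n\}$ has a single maximum $m$ among the maximal elements. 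Then every element of the $\sigma$-class is $\leq$ some $m_i \leq m$, so $m$ is the maximum of the whole class. Since this works for every $\sigma$-class, $S$ is $F$-inverse.

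The main obstacle is the bookkeeping in that last paragraph: I need to be careful that every element of a $\sigma$-class really does sit below \emph{some} maximal element (not just that maximal elements exist). For a fixed element $s$, the "ascending chains are bounded above" condition from Theorem~\ref{thm:uniformcoarse} lets me extend $s$ upward, but to get a maximal element above $s$ rather than merely an upper bound of one chain I should argue via Zorn's lemma applied to the set of elements of the $\sigma$-class lying above $s$, whose chains are bounded above by hypothesis, yielding a maximal element of that poset — which, since $\sigma$-classes are upward-closed, is automatically maximal in the $\sigma$-class. Once that is in place, the reduction to finitely many maximal elements and then collapsing them to one via Proposition~\ref{prop:fub-is-Finverse} is routine. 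I would also note explicitly that we only need the common-upper-bound statement for $\sigma$-related pairs, which is exactly what Proposition~\ref{prop:fub-is-Finverse} provides, so no extra specialness hypothesis is invoked beyond what is already assumed.
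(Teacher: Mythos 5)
Your proposal is correct and follows essentially the same route as the paper: the $F$-inverse direction is immediate from Theorem~\ref{thm:uniformcoarse}, and the converse combines Theorem~\ref{thm:uniformcoarse} with iterated applications of Proposition~\ref{prop:fub-is-Finverse} to collapse the finitely many maximal elements of each $\sigma$-class to a single maximum. Your explicit Zorn-style justification that every element of a $\sigma$-class lies below some maximal element is a point the paper treats more tersely, but it is the same argument.
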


\begin{proof}
Let $S$ be a quasi-countable, $E$-unitary, special inverse monoid. Suppose $S$ has uniformly bounded group distortion. Then
by Theorem~\ref{thm:uniformcoarse}, each $\sigma$-class has finitely many maximal elements; moreover ascending chains are
bounded, and since comparable elements lie in the same $\sigma$-class the bounds must lie in the same $\sigma$-class, so it has at least one maximal element. By Proposition \ref{prop:fub-is-Finverse} the set of maximal elements will have a common upper bound, which since comparable elements lie in the same $\sigma$-class must itself lie in the same $\sigma$-class. Thus each $\sigma$-class has exactly one maximal element and no unbounded ascending chains, from which it follows that $S$ is $F$-inverse. Conversely, if $S$ is $F$-inverse then
it certainly has the property that each $\sigma$-class has finitely many maximal elements and ascending chains are bounded above,
so by Theorem~\ref{thm:uniformcoarse} it has bounded group distortion.
\end{proof}

This makes the class of special $F$-inverse monoids a very natural one to study from a geometric perspective. Our next aim is to characterise, in terms of the presentation, when an $E$-unitary special inverse monoid is $F$-inverse. We begin with an easy sufficient condition, which does not actually require that the inverse monoid is special.

Let $S$ be an $E$-unitary inverse monoid, and denote the embedded copy of $S\Gamma(1)$ in $\Gamma(G)$ by $\Sg$. 
We will say that $\Sg$ is \emph{sprawling} if for any $g \in G$, the intersection $\Sg \cap g\Sg$ is nonempty.

\begin{Lem}
	\label{lem:sprawling}
If $S = \langle X \rangle$ is an $E$-unitary inverse monoid with a sprawling Schützenberger graph of $1$, then $S$ is $F$-inverse.
\end{Lem}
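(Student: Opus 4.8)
The plan is to construct, for each $g\in G$, the maximum element of the $\sigma$-class $\sigma^{-1}(g)$ explicitly, out of a path supplied by the sprawling hypothesis. Fix $g\in G$. Since $1\in V(\Sg)$ we have $g\in V(g\Sg)$, and since $\Sg$ and $g\Sg$ are connected subgraphs of $\Gamma(G)$ sharing a vertex (as $\Sg$ is sprawling), their union $\Sg\cup g\Sg$ is a connected subgraph of $\Gamma(G)$ containing both $1$ and $g$. Let $p\in(X\cup X^{-1})^\ast$ label a path from $1$ to $g$ inside $\Sg\cup g\Sg$. Then $\sigma([p]_S)=[p]_G=g$, so $[p]_S\in\sigma^{-1}(g)$, and I claim $[p]_S$ is the greatest element of $\sigma^{-1}(g)$ under the natural partial order. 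By the result of Stephen recalled above, for $t\in S$ with $\sigma(t)=g$ we have $[p]_S\ge t$ if and only if $p$ labels a path in $S\Gamma(t)$ from $tt^{-1}$ to $t$, i.e., viewing $S\Gamma(t)$ as a subgraph of $\Gamma(G)$, from the vertex $1=\sigma(tt^{-1})$ to the vertex $g=\sigma(t)$. So everything reduces to the claim $(\ast)$: for every $t\in S$ with $\sigma(t)=g$ one has $\Sg\cup g\Sg\subseteq S\Gamma(t)$, as subgraphs of $\Gamma(G)$. Granting $(\ast)$, the chosen $1\to g$ path lies in $S\Gamma(t)$ for every such $t$, hence $[p]_S\ge t$ for all $t\in\sigma^{-1}(g)$, so $[p]_S=\max\sigma^{-1}(g)$; as $g$ was arbitrary, $S$ is $F$-inverse.

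To prove $(\ast)$, recall that $S\Gamma(t)=S\Gamma(tt^{-1})$ as subgraphs of $\Gamma(G)$, since $t\rrel tt^{-1}$. The containment $\Sg\subseteq S\Gamma(t)$ then follows from Lemma~\ref{eunitaryschutz} applied to the empty word: $S\Gamma(tt^{-1})$ is a $P$-complete subgraph of $\Gamma(G)$ containing the vertex $1=\sigma(tt^{-1})$, and $\Sg=S\Gamma(1)$ is the smallest such subgraph. (More concretely, for any idempotent $e$ the map $x\mapsto ex$ sends $R_1$ into $R_e$, since $xx^{-1}=1$ forces $(ex)(ex)^{-1}=e$, and it induces a label-preserving graph embedding of $S\Gamma(1)$ into $S\Gamma(e)$ which on vertices is the inclusion $\sigma(R_1)\subseteq\sigma(R_e)$ inside $\Gamma(G)$; take $e=tt^{-1}$.) For $g\Sg\subseteq S\Gamma(t)$, apply the containment just proved to $t^{-1}$ to get $\Sg\subseteq S\Gamma(t^{-1})=S\Gamma(t^{-1}t)$; left translation by $t$ is a label-preserving graph isomorphism $S\Gamma(t^{-1}t)\to S\Gamma(tt^{-1})$ (Green's Lemma \cite[Theorem 2.3]{ClifPres}), and since $\sigma(tx)=g\,\sigma(x)$ it is exactly left translation by $g$ on the vertex sets inside $\Gamma(G)$; hence $S\Gamma(t)=S\Gamma(tt^{-1})=g\cdot S\Gamma(t^{-1}t)\supseteq g\Sg$. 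This establishes $(\ast)$.

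I expect $(\ast)$ to be the crux of the argument, and the only place where the hypothesis is genuinely used: its content is that every Schützenberger graph belonging to the $\sigma$-class of $g$ contains not only the fixed graph $\Sg$ but also its $g$-translate $g\Sg$, so that ``sprawling'' --- a condition purely about $\Sg$ and its translates --- is exactly what is needed to join $1$ to $g$ by a single path lying in all of them simultaneously. Once this is seen, the two halves of $(\ast)$ are routine: one is the comparison of the Schützenberger graph of an idempotent with that of $1$, and the other is the identification, via Green's Lemma, of $S\Gamma(t^{-1}t)$ with $S\Gamma(tt^{-1})$ under a $\sigma$-translation.
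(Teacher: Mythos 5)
Your proof is correct and follows essentially the same route as the paper's: choose a $1\to g$ path in $\Sg\cup g\Sg$ and show via Stephen's theorem that its value dominates every element of $\sigma^{-1}(g)$. The only difference is that you carefully justify the containment $(\ast)$, namely $\Sg\cup g\Sg\subseteq S\Gamma(t)$, which the paper asserts without proof; your justification (comparing $S\Gamma(1)$ with $S\Gamma(tt^{-1})$ and transporting via Green's Lemma) is sound.
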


\begin{proof}
Let $g \in G$. By assumption $\Sg \cap g\Sg \neq \emptyset$, so the graph $\Sg \cup g\Sg$ is connected. Choose a $1 \rightarrow g$ path in $\Sg \cup g\Sg$, and let $w_g$ be its label. We claim that $[w_g]_S$ is the greatest element of $\sigma^{-1}(g)$. Indeed, if $s \in S$ is such that $s{\sigma}=g$, then the embedded copy of $S\Gamma(s)$ inside
$\Gamma(G)$ must contain $\Sg \cup g\Sg$, and therefore contains a path $ss^{-1} \xrightarrow{w_g} s$, proving that $[w_g]_S \geq s$.
\end{proof}

It turns out that when $S$ is special, $S\Gamma$ being sprawling can be described in terms of structural properties of the maximal group image $G$ and the prefix monoid $P$. Recall that a cancellative monoid is called \emph{right reversible} if every pair of principal right ideals intersects. If $P$ is a right reversible monoid, then it always give rise to a \emph{group of right quotients}, that is, a (necessarily unique) group $G$ containing $P$ as a submonoid where every element can be expressed in the form $pq^{-1}$ with $p,q \in P$. Conversely, every cancellative monoid with a group of right quotients is right reversible.  We refer the reader to \cite[Section 1.10]{ClifPres} for more detail.

\begin{Prop}
	\label{prop:Finverse-suff}
Let $P$ be the prefix monoid associated to the $E$-unitary special inverse monoid $S=\Inv\langle X \mid w_i=1\ (i \in I) \rangle$. Then the following are equivalent:
\begin{enumerate}
	\item\label{item:sprawling} $\Sg$ is sprawling;
	\item\label{item:Grightquotients} $G$ is the group of right quotients of $P$;
	\item\label{item:Pleftrev} $P$ is right reversible, and every generator occurs (in positive and/or negative form) in the relations.
\end{enumerate}
In particular, if any of the above conditions hold then $S$ is $F$-inverse.
\end{Prop}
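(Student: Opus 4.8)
The plan is to collapse all three conditions to the single equation $G = PP^{-1}$, where I write $PP^{-1} = \{qp^{-1} : p,q\in P\}$. First I reformulate (1). As recalled just before the statement, the vertex set of $\Sg$ is exactly $P$, viewed as a submonoid of $G$; and since left multiplication by $g$ is a graph automorphism of $\Gamma(G)$, the translate $g\Sg$ has vertex set $gP$. Two subgraphs of $\Gamma(G)$ meet iff they share a vertex, so $\Sg$ is sprawling iff $P\cap gP\neq\emptyset$ for every $g\in G$; a common element has the form $q=gp$ with $p,q\in P$, so this says precisely that every $g\in G$ is of the form $qp^{-1}$ with $p,q\in P$, i.e.\ $G=PP^{-1}$. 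Thus (1) is equivalent to $G=PP^{-1}$.

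Next comes the Ore-condition bookkeeping, using only that $P$ is cancellative (being a submonoid of a group) and the facts about groups of right quotients recalled above. If $G=PP^{-1}$, then for $p,q\in P$ we have $q^{-1}p\in G=PP^{-1}$, say $q^{-1}p=ab^{-1}$; then $pb=qa\in pP\cap qP$, so $P$ is right reversible, and since in addition $P\subseteq G$ and $G=PP^{-1}$, uniqueness of the group of right quotients gives that $G$ \emph{is} that group --- so $G=PP^{-1}$ implies (2) (and also that $P$ is right reversible). Conversely (2) immediately gives $G=PP^{-1}$ by definition, and (2) implies $P$ is right reversible by the cited fact that a cancellative monoid with a group of right quotients is right reversible. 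Finally, if $P$ is merely right reversible then $PP^{-1}$ is closed under inverses and --- moving an intermediate factor $q^{-1}p'$ to the right using the intersection $qP\cap p'P\neq\emptyset$ --- also under products, so $PP^{-1}$ is a subgroup of $G$ containing $P$.

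The one step that needs a real argument concerns the generators. Let $Y\subseteq X$ be the set of generators that occur (positively or negatively) in some relator. Every prefix of a relator is a word over $Y\cup Y^{-1}$, so $P\subseteq\langle Y\rangle_G$; and consecutive prefixes of a relator differ by a single letter $y^{\pm1}$ with $y\in Y$, so each such $y\in\langle P\rangle_G$; hence $\langle P\rangle_G=\langle Y\rangle_G$. It remains to see that $\langle Y\rangle_G=G$ iff $Y=X$, the nontrivial direction being: if $x\in X\setminus Y$, the retraction $\FG(X)\to\FG(\{x\})\cong\mathbb{Z}$ killing $X\setminus\{x\}$ sends every relator to $1$, hence descends to a homomorphism $G\to\mathbb{Z}$ vanishing on $\langle Y\rangle_G$ but sending the image of $x$ to a generator, so $x\notin\langle Y\rangle_G$. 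Assembling: (1) is equivalent to $G=PP^{-1}$, which is equivalent to (2) by the first two steps; $G=PP^{-1}$ forces $P$ right reversible and $G=\langle P\rangle_G$, hence every generator occurs, giving (3); and conversely (3) makes $PP^{-1}$ a subgroup of $G$ containing $P$ with $\langle P\rangle_G=G$, so $G=PP^{-1}$. The final clause, that each condition forces $S$ to be $F$-inverse, is then Lemma~\ref{lem:sprawling} applied to condition (1).

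I expect the generator step to be the only genuine obstacle --- everything else is routine manipulation with the right Ore condition. The precise point at issue there is that a generator appearing in no relator cannot lie in the subgroup generated by the remaining generators; the free-group retraction above settles it, and is equivalent to observing that $G$ then decomposes as a free product with $\FG(\{x\})$ as a free factor.
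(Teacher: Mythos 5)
Your proof is correct and follows essentially the same route as the paper's: both reduce all three conditions to the single equation $G = PP^{-1}$, use the standard facts relating right reversibility to groups of right quotients, and invoke Lemma~\ref{lem:sprawling} for the final clause. The only real difference is that you spell out, via the retraction $\FG(X)\to\FG(\{x\})$, why a generator absent from every relator cannot lie in the subgroup generated by $P$ --- a point the paper's proof of (2)$\Rightarrow$(3) leaves implicit.
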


\begin{proof}
We begin by showing the equivalence of (\ref{item:sprawling}) and (\ref{item:Grightquotients}).
As the prefix monoid $P$ is the vertex set of $\Sg$, we have that $\Sg$ is sprawling if and only if $P \cap gP \neq \emptyset$ for all $g \in G$. The latter holds if and only if for every $g \in G$ there exist $p, q \in P$ with $p = gq$, that is with $g = pq^{-1}$. In other words, if and only if $G = PP^{-1}$, that is, if and only if $G$ is the group of right quotients of $P$.

Now assume (\ref{item:Grightquotients}). Then $P$ is right reversible and generates $G$ as a group. In particular every generator in $X$ can be expressed in terms of elements of $P$, and therefore every generator has to occur (in positive and/or negative form) in the relations, so (\ref{item:Pleftrev}) holds.

Conversely, assume (\ref{item:Pleftrev}). Since $P$ is right reversible, the set $PP^{-1}$ forms a subgroup in $G$ (see \cite[Section 1.10]{ClifPres}). To show that this subgroup equals $G$ it will suffice to show that $P$
generates $G$ as a group. Let $x \in X$ and suppose $w_i=pxq$ for some words $p,q \in (X\cup X^{-1})^\ast$ (the case when $w_i=px^{-1}q$ is similar). Then we have $p, px \in P$ and $[x]_G=[p^{-1}]_G[px]_G$, hence $P$ generates $G$ as required. 

The fact that these conditions imply that $S$ is $F$-inverse now follows immediately from Lemma \ref{lem:sprawling}.
\end{proof}

\begin{Rem}
Notice that the equivalence of (\ref{item:sprawling}) and (\ref{item:Grightquotients}) also holds for all $E$-unitary inverse monoids if one defines $P=V(\Sg)$: the proof does not use the specialness assumption. However, if $S$ is not special, one cannot easily define $P$ in terms of relator words in a presentation.
\end{Rem}

\begin{Ex} Consider the special inverse monoid
$S=\Inv\langle a,b \mid aba^{-n}b^{-1}=1\rangle$ for some fixed $n \geq 1$. This is $E$-unitary by \cite{IMM} since the relator is cyclically reduced, and $G =Gp\langle a,b \mid aba^{-n}b^{-1}=1\rangle$ is the Baumslag-Solitar group $BS(1,n)$. It is not difficult to see that $P$ is the submonoid of $G$ generated by $a$ and $b$. Observe that $G=PP^{-1}$: for any $w \in \{a,b, a^{-1}, b^{-1}\}^*$, we can apply reductions $a^{-1}b \to ba^{-n}$ and $b^{-1}a \to a^nb^{-1}$ along with group reductions to iteratively eliminate all subwords of the form $x^{-1}y$ where $x,y$ are positive letters. This results in a word $w'$ with $[w']_G=[w]_G$ and $w'=uv^{-1}$ for some $u,v\in \{a,b\}^\ast$, showing $[w]_G \in PP^{-1}$. It follows that $S$ is $F$-inverse.
\end{Ex}

We shall need the following elementary fact about free products of $F$-inverse monoids, which may well be known but which we have been unable to find stated
in the literature. The proof uses $F$-inverse covers. Given an inverse semigroup $S$, and $F$-inverse cover of $S$ is an $F$-inverse monoid $F$ with an idempotent separating surjective morphism $F \to S$.

\begin{Lem}
	\label{lem:free-product}
	The free product of two $F$-inverse monoids (in the category of inverse monoids) is $F$-inverse.
\end{Lem}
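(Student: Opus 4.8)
The plan is to use the standard fact that the free product of two groups is a group, combined with the notion of $F$-inverse covers. Let $M$ and $N$ be $F$-inverse monoids, with maximal group images $G_M$ and $G_N$ respectively. I would first recall (or prove in one line) that for an $F$-inverse monoid the maximal group image $G$ together with the idempotent-separating quotient map is itself not the relevant cover; rather, the key structural input is the classical theorem that every $F$-inverse monoid embeds in (equivalently, is covered by) a semidirect product $G \ltimes Y$ of a semilattice by its maximal group image, and more to the point, that an inverse monoid is $F$-inverse precisely when it has an $F$-inverse cover that is an \emph{isomorphism} — i.e.\ $F$-inverse monoids are their own minimal $F$-inverse covers. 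The actual workhorse is the following: an inverse monoid $S$ is $F$-inverse if and only if $\sigma\colon S \to S/\sigma$ splits as a premorphism, that is, there is a premorphism $\theta\colon S/\sigma \to S$ with $\sigma \circ \theta = \mathrm{id}$; when such $\theta$ exists, $\theta(g)$ is the maximum of the $\sigma$-class $g$, so $s^{\mathfrak m} = \theta(s\sigma)$.

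With that characterisation in hand, the argument runs as follows. Let $P = M \ast N$ be the free product in the category of inverse monoids. First I would identify the maximal group image of $P$: since $\sigma$ is a left adjoint-type construction (the maximal group image functor is left adjoint to the inclusion of groups into inverse monoids), it commutes with coproducts, so $P/\sigma \cong G_M \ast G_N$, the free product of groups, which is a group. Call it $G$. Let $\theta_M\colon G_M \to M$ and $\theta_N\colon G_N \to N$ be the splitting premorphisms coming from $M$, $N$ being $F$-inverse. Composing with the canonical inclusions $M \hookrightarrow P$, $N \hookrightarrow P$, and with the inclusions $G_M \hookrightarrow G$, $G_N \hookrightarrow G$ (these are injective since free products of groups contain their factors), I want to assemble a premorphism $\theta\colon G \to P$ splitting $\sigma_P$. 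The point is that premorphisms $G \to P$ out of a \emph{group} $G$ which is a free product $G_M \ast G_N$ are in bijection with pairs of premorphisms $G_M \to P$, $G_N \to P$ — this is exactly the universal property of the free product of groups applied at the level of premorphisms, once one checks that premorphisms out of a group compose appropriately with group homomorphisms into $G$ and that there is no coherence obstruction (the only relations in $G_M \ast G_N$ are those internal to each factor, and premorphisms respect $1$ and inverses, so a pair of premorphisms on the factors extends uniquely to a premorphism on the free product). This gives $\theta\colon G \to P$ with $\theta|_{G_M} = (\text{incl}) \circ \theta_M$ and similarly for $N$.

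It then remains to verify $\sigma_P \circ \theta = \mathrm{id}_G$, which holds on each factor $G_M$, $G_N$ by construction (using $\sigma_M \circ \theta_M = \mathrm{id}$ and naturality of $\sigma$ with respect to the inclusion $M \hookrightarrow P$), and hence on all of $G$ since $G$ is generated by $G_M \cup G_N$ and both sides are premorphisms agreeing with the identity on a generating set — but one must be slightly careful here, since $\sigma_P \circ \theta$ is a composite of a premorphism with a homomorphism, hence a premorphism, not a priori a homomorphism, so ``agrees on generators'' does not immediately give ``equal''. The clean fix is: $\sigma_P \circ \theta$ is a premorphism $G \to G$ from a group to a group that restricts to the identity on each free factor; a premorphism between groups from a free product which is the identity on each factor must be the identity (a premorphism out of a group into a group satisfies $\theta(g)\theta(g^{-1}) \le \theta(1) = 1$ in the trivial order on a group, forcing $\theta(g^{-1}) = \theta(g)^{-1}$ and then $\theta(g)\theta(h) \le \theta(gh)$ becomes equality, so it is a homomorphism). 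Then, being a group homomorphism identity on generators, it is the identity. I would therefore state and use as a small lemma: \emph{a premorphism from a group to a group is a homomorphism}.

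The main obstacle I expect is the assembly step — verifying that a compatible pair of premorphisms on the factors $G_M, G_N$ genuinely extends to a premorphism on $G_M \ast G_N$ — because premorphisms are not functorial in the naive way (they compose with homomorphisms on either side but not with each other in general), so one cannot simply invoke the universal property of the free product of monoids or groups as a black box. The resolution is to use that the \emph{domain} is a group: for a group $G$, specifying a premorphism $\theta\colon G \to T$ is equivalent to specifying, for each $g$, an element $\theta(g) \in T$ with $\theta(g^{-1}) = \theta(g)^{-1}$ and $\theta(g)\theta(h)\leq\theta(gh)$; when $G = G_M \ast G_N$ one writes $g$ as a reduced alternating word in the factors and sets $\theta(g)$ to be the product of the $\theta_{M}$- or $\theta_N$-values of the syllables, then checks the submultiplicativity inequality using that $\theta_M,\theta_N$ are premorphisms and that in $P$ the natural partial order is compatible with products. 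Everything else is routine bookkeeping.
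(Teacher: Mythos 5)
Your proposal is correct and follows essentially the same route as the paper: both construct a premorphism $M/\sigma \cong G_M \ast G_N \to M$ by sending an alternating normal form to the product of the maximal elements (premorphism values) of its syllables, verify submultiplicativity via the normal-form multiplication, and then invoke the fact that a splitting premorphism forces the $F$-inverse property (which the paper justifies through Petrich's $F$-inverse cover construction rather than quoting it as a characterisation). The one blemish is your claim that a pair of premorphisms on the factors extends \emph{uniquely} to a premorphism on the free product --- premorphisms out of a group are not determined by their values on a generating set --- but since you only use existence, via the explicit syllable-wise construction, nothing in the argument is affected.
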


\begin{proof}
	Let $S$ and $T$ be $F$-inverse monoids, and denote their free product by $M$. We will define a premorphism $\varphi \colon M/\sigma \to M$ such that $\sigma \circ \varphi=\id_{M/\sigma}$, that is, a map satisfying $\varphi(1)=1, \varphi(g^{-1})=\varphi(g)^{-1}$, $\varphi(g)\varphi(h)\leq \varphi(gh)$ and $\sigma(\varphi(g)) = g$ for all $g,h \in M/\sigma$. It will then follow that $M$ is $F$-inverse: indeed, given such a $\varphi$, it follows from \cite[Theorem VII.6.11]{Petrich} that
	$$F=\{(m,g)\mid m \leq \varphi(g)\} \leq M \times M/\sigma$$
	is an $F$-inverse cover of $M$, and since $\sigma \circ \varphi=\id_{M/\sigma}$, the following diagram commutes.
	\begin{center}
		\begin{tikzpicture}
		\node (T) at (0,1.5) {$F$};
		\node (M) at (2,1.5) {$M$};
		\node (G) at (1,0) {$M/\sigma$};
		
		\draw[->] (T) -- (M) node[midway, above] {$\pi_1$};
		\draw[->] (T) -- (G) node[midway, left] {$\pi_2$};
		\draw[->] (M) -- (G) node[midway, right] {$\sigma$};
	\end{tikzpicture}
	\end{center}
	Since $\pi_2$ is idempotent pure, so is the covering map $\pi_1$, and it is by definition idempotent separating and onto, therefore it must be an isomorphism.
	
	The elements of $M/\sigma=S/\sigma \ast T/\sigma$ can be uniquely represented by products $g_1g_2 \cdots g_n$ where $g_i$ belong alternately to $S/\sigma\setminus \{1\}$ or $T/\sigma\setminus \{1\}$. For any $g \in S/\sigma$ or $g \in T/\sigma$, define $g^\m$ to be the element of $S$, respectively $T$, which is greatest in the $\sigma$-class $g$. Notice that since $M$ contains isomorphic copies of both $S$ and $T$ (see \cite[Section I.13.1]{Petrich}), $g^\m$ can always be interpreted in $M$.
	We define the map $\varphi \colon  S/\sigma \ast T/\sigma \to M$ by mapping $g_1g_2 \cdots g_n$ into $g_1^\m g_2^\m \cdots g_n^\m$, where empty products as usual are interpreted as $1$.
	
	Observe that $\varphi(g)^{-1} = \varphi(g^{-1})$ is immediate from $(g^\m)^{-1} = (g^{-1})^\m$, so the only condition that requires proof is $\varphi(g)\varphi(h)\leq \varphi(gh)$. If $g=g_1 \cdots g_n$ and $h=h_1\cdots h_m$, then the unique representative of $gh$ is obtained from $g_1 \cdots g_nh_1\cdots h_m$ by iteratively replacing consecutive letters from the same group by their products, and erasing any instances of $1$. Notice that for any $g,h \in S/\sigma \cup T/\sigma$ belonging to the same group, we have $\varphi(g)\varphi(h)=g^\m h^\m \leq (gh)^\m =\varphi(gh)$, since $g^\m h^\m \mathrel{\sigma} (gh)^\m$. It follows by a formal induction that $\varphi$ is a premorphism.	
\end{proof}

Observe that the proof also tells us what the greatest element in a $\sigma$-class of $M$ is: since $F$ is isomorphic to $M$ via $\pi_1$, and the greatest element in the $\sigma$-class $g$ of $F$ is $(\varphi(g), g)$, it follows that the greatest element in the $\sigma$-class $g$ of $M$ is $\varphi(g)$.

\begin{Prop}
	\label{prop:Finverse-suff2}
Let $P$ be the prefix monoid associated to the $E$-unitary special inverse monoid $S=\Inv\langle X \mid w_i=1\ (i \in I) \rangle$. If $P$ is right reversible, then $S$ is $F$-inverse.
\end{Prop}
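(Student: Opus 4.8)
The plan is to reduce to the already-proved Proposition~\ref{prop:Finverse-suff} by peeling off the generators that never appear in a relator. Write $X = Y \sqcup Z$, where $Y$ consists of those generators occurring (positively or negatively) in some relator $w_i$ and $Z = X \setminus Y$. Since every $w_i$ is a word over $Y \cup Y^{-1}$, the presentation of $S$ is just the juxtaposition of the presentation $\Inv\langle Y \mid w_i = 1\ (i \in I)\rangle$ of an inverse monoid $S_0$ and the relator-free presentation of $\FIM(Z)$; hence $S \cong S_0 * \FIM(Z)$, the free product in the category of inverse monoids.

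I would then check that $S_0$ meets the hypotheses of Proposition~\ref{prop:Finverse-suff}. Every generator in $Y$ occurs in some relator by construction, so condition~(\ref{item:Pleftrev}) of that proposition needs only that the prefix monoid of $S_0$ is right reversible. But the maximal group image of $S_0$ is $G_0 = \Gp\langle Y \mid w_i = 1\rangle$, which embeds in $G = G_0 * \FG(Z)$, and the prefixes of the $w_i$ lie in $G_0$; so the prefix monoid of $S_0$, computed in $G_0$, is exactly the monoid $P$ of the statement, and right reversibility is intrinsic to $P$. It remains to see $S_0$ is $E$-unitary: free factors embed in the free product \cite[Section I.13.1]{Petrich}, so $S_0$ is an inverse submonoid of $S$; the natural partial order of $S_0$ is the restriction of that of $S$ (both are given by $s \leq t \iff s = ts^{-1}s$), and $E(S_0) = E(S) \cap S_0$, so $e \leq s$ with $e \in E(S_0)$ forces $s \in E(S)$ and hence $s \in E(S_0)$. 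Thus $S_0$ is $F$-inverse by Proposition~\ref{prop:Finverse-suff}.

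To finish, note that $\FIM(Z)$ is $F$-inverse — the maximum of the $\sigma$-class of $g \in \FG(Z)$ is the element whose Munn tree is the geodesic from $1$ to $g$ — and the trivial monoid is trivially $F$-inverse; so whether or not $Z$ is empty, $S \cong S_0 * \FIM(Z)$ is a free product of two $F$-inverse monoids and is therefore $F$-inverse by Lemma~\ref{lem:free-product}. I expect no real obstacle: all the content sits in Proposition~\ref{prop:Finverse-suff} and Lemma~\ref{lem:free-product}, and the only care required is the bookkeeping that the presentation splits as a free product, that the prefix monoid of the relator-bearing factor is still $P$, and — the mildly delicate point — that $E$-unitarity descends to the free factor, for which one should argue via the embedding $S_0 \hookrightarrow S$ rather than through the quotient map.
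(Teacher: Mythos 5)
Your proposal is correct and follows essentially the same route as the paper: split $S$ as the free product of $\FIM$ on the unused generators and the relator-bearing factor, apply Proposition~\ref{prop:Finverse-suff} to the latter, and conclude with Lemma~\ref{lem:free-product}. You additionally verify two points the paper leaves implicit — that the relator-bearing factor is $E$-unitary and that its prefix monoid is the same $P$ — and both verifications are sound.
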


\begin{proof}
Denote the subset of generators occurring in the relations by $Z$, and put $Y=X \setminus Z$. Then $S$ splits as a free product of $\FIM(Y)$ and $T=\Inv\langle Z \mid w_i=1\ (i \in I)\rangle$. Free inverse monoids are well-known to be $F$-inverse, and $T$ is $F$-inverse by Proposition \ref{prop:Finverse-suff}; hence the statement follows by Lemma \ref{lem:free-product}.

The maximal group image $G$ of $S$ is the free product of $\FG(Y) $ and $H=\Gp\langle Z \mid w_i=1\ (i \in I) \rangle$. By Lemma \ref{lem:free-product} and the observation thereafter, given $g \in G$ written as an alternating product $g_1 \cdots g_n$ of non-identity elements of $\FG(Y)$ and $H$, the greatest element in the $\sigma$-class $g$ of $S$ is $g_1^\m \cdots g_n^\m$, where $g_i^\m$ is the greatest element in the $\sigma$-class $g_i$ of $\FIM(Y)$ or $T$. In particular, if $u_i$ is a representative of $g_i^\m$, then $u_1 \ldots u_n$ is a representative of the greatest element in $g$. 
\end{proof}

We now move on to give a necessary and sufficient condition for an $E$-unitary special inverse monoid $S$ to be $F$-inverse.
First, assume that $u, w\in (X\cup X^{-1})^\ast$ represent elements of $S$ in the $\sigma$-class $g$. Then $[w]_S \geq [u]_S$ if and only if the Sch\"utzenberger graph $S\Gamma(u)$ contains a $[uu^{-1}]_S \xrightarrow{w} [u]_S$ path. Consider the embedded copy of $S\Gamma(u)$ in $\Gamma(G)$. If the set of vertices traversed by the path $1 \xrightarrow{u} g$ is $1=k_1, k_2, \ldots, k_n=g$, then the embedded copy of $S\Gamma(u)$ is exactly the subgraph $\langle 1 \xrightarrow{u} g \rangle \cup \bigcup_{i=1}^n k_iS\Gamma$, where $\langle 1 \xrightarrow{u} g \rangle$ denotes the subgraph induced by $1 \xrightarrow{u} g$. In particular, since $S\Gamma(u)$ is an induced subgraph in $\Gamma(G)$ by \cite[Lemma 3.5]{Ste93}, it
 contains a $[uu^{-1}]_S \xrightarrow{w} [u]_S$ path if and only if each vertex of the $1 \xrightarrow{w} g$ path in $\Gamma(G)$ is contained in some $k_iS\Gamma$. 

For $h,k \in G$, observe that $h \in kS\Gamma$ if and only if $h \in kP$, which in turn is true if and only if $k \in hP^{-1}$. Denote the subgraph of $\Gamma(G)$ induced on $P^{-1}$ by $\ol {S\Gamma}$. Then $h \in kS\Gamma$ if and only if $k \in h\ol{S\Gamma}$. Thus we obtain the following lemma:

\begin{Lem}
	\label{lem:wlequ}
Let $u, w\in (X\cup X^{-1})^\ast$ label $1 \to g$ paths in $\Gamma(G)$. Then $[w]_S \geq [u]_S$ if and only if for each vertex $h$ traversed by $w$, there exists some vertex $k$ traversed by $u$ with $k \in h\ol{S\Gamma}$.
\end{Lem}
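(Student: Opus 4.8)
The plan is simply to re-assemble, as a single clean equivalence, the chain of implications already laid out in the paragraph immediately preceding the lemma, checking that each link is reversible. The first link is Stephen's description of the natural partial order: since $u$ and $w$ both label $1 \to g$ paths in $\Gamma(G)$, they represent $\sigma$-related elements of $S$, and $[w]_S \geq [u]_S$ holds exactly when $w$ labels a path from the root $\alpha = [uu^{-1}]_S$ to the root $\beta = [u]_S$ in the Sch\"utzenberger graph $S\Gamma(u)$. Under the embedding of $S\Gamma(u)$ into $\Gamma(G)$ provided by $E$-unitarity (Lemma~\ref{eunitaryschutz}), $\alpha$ is sent to $1$ and $\beta$ to $g$, so this is equivalent to asking that the path $1 \xrightarrow{w} g$ in $\Gamma(G)$ lie inside the embedded copy of $S\Gamma(u)$.

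The second link identifies that embedded copy. As recalled in Section~\ref{sec:geometry}, specialness of $S$ forces $S\Gamma(u)$ to be the determinisation of $\Lin(u)$ with a copy of $S\Gamma(1)$ sewn onto every vertex; transported into $\Gamma(G)$ via the $E$-unitary embedding, this becomes precisely $\langle 1 \xrightarrow{u} g\rangle \cup \bigcup_{i=1}^{n} k_i\Sg$, where $1 = k_1, \ldots, k_n = g$ are the vertices traversed by $u$, since the copy of $S\Gamma(1)$ attached at the $i$-th vertex of $\Lin(u)$ is carried to the left translate $k_i\Sg$. Its vertex set is therefore $\bigcup_{i=1}^n k_i P$ (note $k_i \in k_i\Sg$ because $1 \in P$), and since $S\Gamma(u)$ is an \emph{induced} subgraph of $\Gamma(G)$ by \cite[Lemma 3.5]{Ste93}, the path $1 \xrightarrow{w} g$ lies in it if and only if every vertex $h$ it traverses lies in some $k_i\Sg$, i.e. in some $k_iP$.

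The last link is the group-theoretic rewriting $h \in k\Sg \iff k \in h\ol{\Sg}$: indeed $h \in k\Sg$ says $h \in kP$, hence $h = kp$ for some $p \in P$, hence $k = hp^{-1} \in hP^{-1} = hV(\ol{\Sg})$, and each step reverses. Applying this with $k$ running over $k_1, \ldots, k_n$ converts ``$h$ lies in some $k_i\Sg$'' into ``$k \in h\ol{\Sg}$ for some vertex $k$ traversed by $u$'', and stringing the three links together yields the claimed biconditional.

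I do not anticipate a serious obstacle, since the statement is essentially a repackaging of the surrounding discussion; the one point requiring genuine care is the identification of the embedded $S\Gamma(u)$ with $\bigcup_i k_i\Sg$ — in particular that the sewn copies of $\Sg$ are carried exactly onto the translates $k_i\Sg$ and that the determinisation step produces nothing further, which is where specialness together with the determinism of the ambient graph $\Gamma(G)$ enter. One must also be attentive to the normalisation in Lemma~\ref{eunitaryschutz}, namely that the roots $[uu^{-1}]_S$ and $[u]_S$ of $S\Gamma(u)$ are sent to $1$ and $g$ respectively, so that the vertices traversed by $u$ and those traversed by $w$ are being compared inside one and the same copy of $\Gamma(G)$.
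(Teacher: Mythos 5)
Your proposal is correct and follows essentially the same route as the paper, which derives the lemma directly from the preceding discussion: Stephen's characterisation of the natural partial order, the identification of the embedded $S\Gamma(u)$ with $\langle 1 \xrightarrow{u} g\rangle \cup \bigcup_{i} k_i\Sg$ (using specialness), the induced-subgraph fact from \cite[Lemma 3.5]{Ste93}, and the elementary rewriting $h \in k\Sg \iff k \in h\ol{S\Gamma}$. The points you flag as requiring care are exactly the ones the paper relies on, so nothing is missing.
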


 We obtain the following statement as a consequence:

\begin{Thm}
	\label{thm:wmax}
Let $S$ be an $E$-unitary, special inverse monoid. A word $w\in (X\cup X^{-1})^\ast$ represents a maximum element in the $\sigma$-class $g$ if and only if for each vertex $h$ traversed by $1 \xrightarrow{w} g $, the graph	$h\ol{S\Gamma}$ either contains $1$ or $g$, or it is a cut set for $\Gamma(G)$ with $1$ and $g$ falling in different components. 

In particular, $S$ is $F$-inverse if and only if for every $g \in G$, there exists a $1 \xrightarrow{w} g$ path where $w$ satisfies the above property.
\end{Thm}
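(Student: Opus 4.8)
The plan is to directly combine Lemma~\ref{lem:wlequ} with the characterisation of edges of $S\Gamma(u)$ inside $\Gamma(G)$ recalled just before that lemma. First I would fix $g \in G$ and a word $w$ labelling a $1 \xrightarrow{w} g$ path in $\Gamma(G)$, traversing vertices $1 = h_1, h_2, \dots, h_m = g$. The claim is that $[w]_S$ is a maximum element of $\sigma^{-1}(g)$ if and only if for every vertex $h$ traversed by $w$, the graph $h\ol{S\Gamma}$ contains $1$ or $g$, or separates $1$ from $g$ in $\Gamma(G)$. By definition $[w]_S$ is maximum in its $\sigma$-class precisely when $[w]_S \geq [u]_S$ for every $u$ labelling a $1 \to g$ path in $\Gamma(G)$ (every element of $\sigma^{-1}(g)$ is of the form $[u]_S$ for some such $u$, since $S$ is $E$-unitary and the Sch\"utzenberger graph of $u$ embeds in $\Gamma(G)$).

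For the forward direction, suppose some $h$ traversed by $w$ has $h\ol{S\Gamma}$ containing neither $1$ nor $g$, and not separating them. Then there is a path $q$ from $1$ to $g$ in $\Gamma(G)$ avoiding every vertex of $h\ol{S\Gamma}$ (using connectivity of $\Gamma(G)$ minus the cut set). Let $u = \ell(q)$. Then no vertex $k$ traversed by $u$ lies in $h\ol{S\Gamma}$, i.e.\ no $k$ satisfies $k \in h\ol{S\Gamma}$; equivalently, by the observation $h \in kS\Gamma \iff k \in h\ol{S\Gamma}$, the vertex $h$ lies in no $kS\Gamma$ for $k$ on $u$. By Lemma~\ref{lem:wlequ} this shows $[w]_S \not\geq [u]_S$, so $[w]_S$ is not maximum. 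Conversely, if every $h$ on $w$ satisfies the stated condition, take any $u$ labelling a $1 \to g$ path, traversing vertices $k_1 = 1, \dots, k_r = g$. For any $h$ on $w$: if $h\ol{S\Gamma} \ni 1 = k_1$ or $\ni g = k_r$ we are done; otherwise $h\ol{S\Gamma}$ separates $1$ from $g$, so the path labelled $u$ must pass through some vertex $k_i \in h\ol{S\Gamma}$. In all cases there is a $k_i$ on $u$ with $k_i \in h\ol{S\Gamma}$, so by Lemma~\ref{lem:wlequ}, $[w]_S \geq [u]_S$. As $u$ was arbitrary, $[w]_S$ is maximum. The ``in particular'' clause is then immediate: $S$ is $F$-inverse iff every $\sigma$-class has a maximum element, iff for every $g \in G$ some word $w$ labelling a $1 \to g$ path has the stated property.

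The one point requiring a little care, which I expect to be the main (mild) obstacle, is the translation between ``$h\ol{S\Gamma}$ separates $1$ from $g$'' and the statement that every $1 \to g$ path meets $h\ol{S\Gamma}$ — in particular making sure that when $h\ol{S\Gamma}$ contains $1$ or $g$ the condition is genuinely what is needed (one should note that $h\ol{S\Gamma}$ is a set of vertices and edges, and ``cut set with $1$ and $g$ in different components'' is to be read as: removing the vertices of $h\ol{S\Gamma}$ from $\Gamma(G)$ disconnects $1$ from $g$). Everything else is a direct unwinding of Lemma~\ref{lem:wlequ} together with the duality $h \in kS\Gamma \iff k \in h\ol{S\Gamma}$, which was established just above the statement.
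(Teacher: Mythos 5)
Your proposal is correct and takes essentially the same approach as the paper: both reduce the statement to Lemma~\ref{lem:wlequ} applied to all $1 \to g$ paths $u$, and then observe that the resulting condition on a fixed vertex $h$ of $w$ is precisely that $h\ol{S\Gamma}$ meets every $1 \to g$ path, i.e.\ contains $1$ or $g$ or separates them. Your write-up merely spells out the two directions (including the contrapositive for necessity) that the paper leaves implicit.
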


\begin{proof}
A word $w\in (X\cup X^{-1})^\ast$ represents a maximum element in the $\sigma$-class $g$ if and only if the condition of Lemma \ref{lem:wlequ} holds for all possible paths $1 \xrightarrow{u} g$. In particular for any vertex $h$ traversed by $w$, the graph $h\ol{S\Gamma}$ should intersect any $1 \to g$ path, which is by definition equivalent to the statement of the proposition.
\end{proof}

\section{Computable distortion}\label{sec:computability}

In this section we turn our attention to computational problems. In this context what matters is not so much whether the group distortion is bounded, as whether it is bounded by a recursive function. For simplicity we restrict attention to finitely generated inverse semigroups, with the word metric (and corresponding proper length function) induced by a fixed finite generating set. We expect that some of our results can be extended
to encompass countably generated inverse semigroups, but we refrain from attempting this here because computing with infinitely generated structures entails definitional subtleties which would significantly increase the length of the exposition and risk obscuring the key arguments.

\begin{Def}[recursively bounded group distortion]
	\label{def:recursive-group-dist}
Let $S$ be a finitely generated inverse semigroup with maximal group image $G$, and let $l_S$ and $l_G$ be the standard word metrics corresponding to some finite generating set of $S$ and the corresponding generating set of $G$. We say that a finitely generated inverse semigroup $S$ has \textit{recursively bounded group distortion} if there exists a recursive function
$\phi : \mathbb{N} \to \mathbb{N}$ such that for every $s \in S$ we have $l_S(s) \leq \phi(l_G( s\sigma))$ (or equivalently, $d_S(s,t) \leq \phi(d_G(s \sigma, t\sigma))$ for the respective distances for all $s,t \in S$, see Proposition \ref{prop:length-function-distortion}). 
\end{Def}

We emphasise that the above definition depends upon the use of a word metric, rather than an arbitrary standard metric as in the
previous section. We cannot usually expect algorithmic results to be independent of the choice of standard metric, since the coarse equivalence between different standard metrics may not have recursive bounding functions. We remark that, by essentially the same argument used to prove Proposition~\ref{prop:distortioncoarse}, recursively bounded group distortion is equivalent to the morphism $\sigma$ being a coarse embedding \textit{with recursive bounding functions}.

\begin{Ex}[group distortion can be bounded but not recursively bounded]
Let $H$ be a subgroup of a group $G$. Take $H'$ to be an isomorphic copy of $H$ disjoint from $G$, and $f \colon H' \to G$ an embedding with image $H$. Then we may consider an inverse monoid $S$ on the set $H' \sqcup G$ with multiplication defined by
$$gh = \begin{cases}
\textrm{the $H'$-product } gh &\textrm{ if } g, h \in H'; \\
\textrm{the $G$-product } gh &\textrm{ if } g, h \in G; \\
\textrm{the $G$-product } f(g)\ h &\textrm{ if } g \in H', h \in G; \textrm{ and} \\
\textrm{the $G$-product } g\ f(h) &\textrm{ if } g \in G, h\in H'.
\end{cases}$$
Then $S$ is an inverse monoid with identity element the identity of $H'$, the identity of $G$ being the only other
idempotent and the inverse of each element being its inverse in $H'$ or $G$ as appropriate. The maximal group image is clearly
$G$, with the natural morphism to it being defined by the map $f$ on $H'$ and the identity function on $G$. (We remark that $S$ is an example of
a \textit{Clifford semigroup}; see for example \cite[Section~4.2]{Howie} for further discussion of these.) 

Now suppose $G$ and $H$ are both finitely generated. Choose a finite generating set $X$ for $H$ and extend it to a finite
generating set $Y$ for $G$. Let $X'=\{x': x\in X\}$ denote a set in bijection with $X$, which we consider a generating set for $H'$ in the obvious way. Then $S$ is clearly generated by the set $X' \sqcup Y$. We let $d_S$ denote the corresponding word metric. Observe that since $[x]_G=[f(x')]_G$ for any $x \in X$, the word metric on $G$ is the same with respect to $Y$ as the one inherited from $S$, which we unambiguously denote by $d_G$. Consider furthermore the word metric $d_H$ on $H$ corresponding to the generating set $X$; identifying $H$ and $H'$, this coincides with the metric $d_S$ on $H'$ (but usually not with the metric $d_S$ on $H \subseteq G$).
We now have that $\sigma \colon S \to G$ restricts to an isometry on $G$, and
therefore
\begin{eqnarray*}
	\sup \{l_S(s): s\in S, l_G(s\sigma)\leq r\} &=& \sup \{l_S(h): s\in H', l_G(f(h))\leq r\}\\
	&=&\sup \{l_H(h): h\in H, l_G(h)\leq r\}
\end{eqnarray*}
which is finite for every $r$, since $l_G$ and $l_H$ both give rise to standard (i.e. proper and right invariant) metrics on the group $H$ which are then coarsely equivalent. 
Moreover, the group distortion of $S$ is bounded by exactly the same functions as the distortion of the embedding of $H$ into $G$.

If additionally $G$ and $H$ are both finitely presented, then $S$ is also finitely presented. Indeed, let 
$$G=\Gp \langle Y \mid w_1=1, \ldots ,w_n=1 \rangle,$$
$$H'=\Gp \langle X' \mid u_1=1, \ldots ,u_m=1 \rangle,$$
and let $e$ be symbol disjoint from $Y$ and $X'$. Consider the inverse monoid $M$ generated by $X' \sqcup Y \sqcup \{e\}$ subject to relations
\begin{itemize}
	\item[(R1)] $1=x'(x')^{-1}=(x')^{-1}x'$ for each $x \in X'$
	\item[(R2)] $e^2=e$
	\item[(R3)] $ex'=x'e=x$ for each $x  \in X$
	\item[(R4)] $e=yy^{-1}=y^{-1}y$ for each $y  \in Y$
	\item[(R5)] $u_1^2=u_1, \ldots, u_m^2=u_m$
	\item[(R6)] $w_1^2=w_1, \ldots ,w_n^2=w_n$.
\end{itemize}
It is straightforward to see using Stephen's algorithm that $S\Gamma(1)$ is isomorphic to the Cayley graph of $H'$ with respect to the generating set $X'$, and that $S\Gamma(e)$ is isomorphic to the Cayley graph of $G$ with respect to the generating set $Y \cup \{e\}$ where $[e]_G=1$. Furthermore it follows from (R1) that $[ww^{-1}]_M=1$ for every word $w$ in the alphabet $X' \cup (X')^{-1}$, and similarly it follows from (R3) and (R4) that $[ww^{-1}]_M=[e]_M$ for every word $w$ in $X' \cup (X')^{-1} \cup Y \cup Y^{-1}$ which involves at least one letter from $Y \cup Y^{-1}$. This means that $1$ and $[e]_M$ are the only idempotents of $M$, which is therefore a union of its two maximal subgroups $V(S\Gamma(1)) \cong H'$ and $V(S\Gamma(e)) \cong G$. Observe furthermore that identifying $M$ with $H' \sqcup G$ and defining $f(h):=eh$ for $h \in H'$ results by (R3) in the exact same multiplication rules for $M$ as the ones defining $S$, yielding $S \cong M$.

It is well known \cite[Proposition~2.1]{Farb} that if $G$ is finitely presented with decidable word problem, then the distortion of the embedding of $H$ in $G$ is bounded by a recursive function if and only if $H$ has decidable membership in $G$. Thus, by taking $G$ to be a finitely presented group having a
finitely presented subgroup $H$ with undecidable membership problem (an example of which can be found in \cite[Theorem 1.2]{Bridson}), the above construction yields an inverse monoid with group distortion
which is bounded but not recursively bounded.
\end{Ex}

Recall that an inverse semigroup is $F$-inverse if in every $\sigma$-class there exists a maximal element; in a computational setting, the natural question is not so much whether a maximal element exists, as whether it can be effectively computed. This observation
leads to the following definition:
\begin{Def}[effectively $F$-inverse]
A finitely generated inverse semigroup $S = \langle X \rangle$ is called \textit{effectively $F$-inverse} if it is $F$-inverse, and there is an algorithm which on input a word $w \in (X \cup X^{-1})^\ast$, outputs a representative of the maximal element $[w]_S^\m$.
\end{Def}

\begin{Prop}\label{prop:effectivefinverse}
Suppose $S$ is a finitely generated inverse semigroup which is effectively $F$-inverse, and whose maximal group
image has decidable word problem. Then $S$ has recursively bounded group distortion.
\end{Prop}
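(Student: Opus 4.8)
The plan is to exploit the $F$-inverse property to reduce the statement to a bound on the lengths of \emph{maximal} elements of $\sigma$-classes, and then to extract a recursive bound directly from the effective $F$-inverse algorithm. Throughout, write $l_S$ and $l_G$ for the word length functions associated to the fixed finite generating set $X$ of $S$ and the corresponding generating set of $G$.

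First I would recall that since $S$ is $F$-inverse, every $s \in S$ satisfies $s \le s^\m$ in the natural partial order, whence $l_S(s) \le l_S(s^\m)$ (this monotonicity is recorded in Section~\ref{sec:geometry}). Since the maximal element of a $\sigma$-class depends only on the class, it therefore suffices to produce a recursive $\phi$ with $l_S(s^\m) \le \phi(l_G(s\sigma))$ for all $s \in S$.

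Let $\mathcal{A}$ denote the algorithm witnessing that $S$ is effectively $F$-inverse, so that on input $w \in (X \cup X^{-1})^\ast$ it outputs a word $\mathcal{A}(w)$ representing $[w]_S^\m$. For $r \in \mathbb{N}$ I would set
$$\phi(r) = \max\bigl\{\, |\mathcal{A}(w)| \ :\ w \in (X \cup X^{-1})^\ast,\ |w| \le r \,\bigr\}.$$
Since $X$ is finite there are only finitely many words of length at most $r$, this finite list is computable from $r$, and $\mathcal{A}$ halts on each of them; hence $\phi$ is recursive (and visibly non-decreasing). To see that $\phi$ works, fix $s \in S$ and put $d = l_G(s\sigma)$. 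Choose a word $w$ of length $d$ with $[w]_G = s\sigma$; then $[w]_S$ lies in the $\sigma$-class of $s$, so $[w]_S^\m = s^\m$, and the word $\mathcal{A}(w)$ represents $s^\m$. Consequently
$$l_S(s) \ \le\ l_S(s^\m) \ =\ l_S([w]_S^\m) \ \le\ |\mathcal{A}(w)| \ \le\ \phi(d) \ =\ \phi(l_G(s\sigma)),$$
which, by Proposition~\ref{prop:length-function-distortion}, is precisely the assertion that $S$ has recursively bounded group distortion.

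I do not expect a genuine obstacle here; the one point that needs care is the reduction in the previous paragraph, namely the observation that the finitely many words of length at most $r$ already meet \emph{every} $\sigma$-class whose $\sigma$-image has $l_G$-length at most $r$ --- so that the a priori infinitary quantity $\sup\{\, l_S(s^\m) : l_G(s\sigma)\le r \,\}$ is in fact bounded above by a finite, computable maximum. It is perhaps worth remarking that decidability of the word problem of $G$ is not actually used in this direction, although of course it is needed to compute the group metric $l_G$ itself and plays a role in related results.
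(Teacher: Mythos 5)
Your argument is correct and is essentially identical to the paper's proof: both define $\phi(r)$ as the maximum, over the finitely many words $w$ of length at most $r$, of the length of the output of the effective-$F$-inverse algorithm on $w$, and then use $l_S(s)\le l_S(s^\m)=l_S([w]_S^\m)\le|\mathcal{A}(w)|\le\phi(l_G(s\sigma))$ for a geodesic word $w$ representing $s\sigma$. Your closing remark that decidability of the word problem in $G$ is not actually invoked in this computation is also consistent with the paper's own proof.
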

\begin{proof}
Since $S$ is effectively $F$-inverse, there is a computable function, $f$ say, which maps each word $w$ to an upper bound on $l_S([w]^\m)$; indeed one may compute a word representing $[w]^\m$ and then set $f(w)$ to be the number of letters in this word, which by sub-additivity of the length function exceeds the length of the element represented.

Define $\phi : \mathbb{N} \to \mathbb{N}$ by
$$\phi(n) = \max \lbrace f(w) \mid |w| \leq n \rbrace.$$
Clearly the function $\phi$ is computable, since $f$ is computable and we can enumerate the finitely many words $w$ of 
length less than $n$.\footnote{We remark that this step may not work for length functions other than those arising from word metrics.
} And for every $s \in S$ there exists a word $w$
representing $s\sigma \in G$ of length $l_G(s\sigma)$, whereupon 
$$l_S(s) \ \leq \ l_S(s^\m) \ = \ l_S([s\sigma]^\m) \ \leq \ f(w) \ \leq \ \phi(l_G(s\sigma))$$
where the first inequality holds because $s \leq s^m$, the equality holds because $s^\m = [s\sigma]^\m$, the second inequality holds by the definition of $f(w)$ and the final inequality holds by the definition of $\phi$.
\end{proof}

The following is an effective version of Lemma~\ref{lem:sprawling}:
\begin{Lem}
	\label{lem:sprawlingeffective}
Let $S = \langle X \rangle$ be a finitely generated $E$-unitary inverse monoid equipped with the word metric and having  sprawling Schützenberger graph of $1$, and whose maximal group image has decidable word problem. Then $S$ is effectively $F$-inverse, and hence has recursively bounded group distortion.
\end{Lem}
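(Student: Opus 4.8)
The plan is to make effective the argument already used in Lemma~\ref{lem:sprawling}. Recall that that proof, given $g \in G$, produces the maximum element of $\sigma^{-1}(g)$ by choosing a $1 \to g$ path in $\Sg \cup g\Sg$ and reading off its label $w_g$. So it suffices to show that, given a word $w \in (X \cup X^{-1})^\ast$, one can effectively produce such a path, where $g = [w]_G$ is the element represented by $w$. First I would note that since $G$ has decidable word problem, and the prefix monoid $P = V(\Sg)$ is generated by the finitely many prefixes of the (finitely many — here we may assume the presentation is finite, or at least that $\Sg$ is generated by a computable set of prefixes) relators, one can enumerate the elements of $P$ as words in $G$ and test membership $h \in P$ for any given $h \in G$ in a semi-decidable way; combined with decidability of the word problem this is enough to drive a search.

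The key steps, in order: (1) Given $w$, set $g = [w]_G$. (2) Search over pairs $p, q \in P$ (enumerating $P$ as above) until we find $p, q$ with $[p]_G = g[q]_G$ in $G$ — this terminates because $\Sg$ is sprawling, so $\Sg \cap g\Sg \ne \emptyset$, i.e. there exist such $p,q$; decidability of the word problem in $G$ lets us recognise when we have found them. (3) From $p$ and $q$ build an explicit word $w_g$ labelling a $1 \to g$ path in $\Sg \cup g\Sg$: the vertex $[p]_G = g[q]_G$ lies in both $\Sg$ and $g\Sg$, so $p$ labels a $1 \to [p]_G$ path in $\Sg$ and $g \cdot q$ labels a $g \to g[q]_G$ path in $g\Sg$, whence $w_g = p\,q^{-1}$ labels a $1 \to g$ path passing through the common vertex. (4) Output $w_g$; by the proof of Lemma~\ref{lem:sprawling}, $[w_g]_S = [w]_S^\m$, so $S$ is effectively $F$-inverse. (5) Since $S$ is effectively $F$-inverse and its maximal group image has decidable word problem, Proposition~\ref{prop:effectivefinverse} gives recursively bounded group distortion.

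The main obstacle is step (2): making the search for $p, q \in P$ genuinely terminate and genuinely correct. Enumerating $P$ requires that we have a computable generating set for $P$ (the prefixes of the relators), which is automatic when the presentation is finite but needs a remark otherwise; and recognising $[p]_G = g[q]_G$ needs the word problem in $G$, which we have by hypothesis. The sprawling hypothesis guarantees a solution exists, so the search halts. One subtlety worth spelling out is that we only need to enumerate $P$, not decide membership in $P$ — a pure search suffices, since we are looking for a witness rather than certifying its absence — which is why decidability of the word problem in $G$ (rather than, say, of the prefix membership problem) is enough. Once step (2) is in hand, steps (3)–(5) are routine bookkeeping plus a citation to the already-established Lemma~\ref{lem:sprawling} and Proposition~\ref{prop:effectivefinverse}.
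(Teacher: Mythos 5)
Your proposal is correct and shares the paper's overall architecture: reduce to Lemma~\ref{lem:sprawling}, show that a label $w_g$ of a $1\to g$ path in $\Sg\cup g\Sg$ can be found effectively, and then invoke Proposition~\ref{prop:effectivefinverse}. Where you differ is in the implementation of the search. The paper uses the decidable word problem of $G$ to construct larger and larger balls of $\Gamma(G)$ and runs Stephen's procedure inside them, halting when the growing approximation of $\Sg\cup g\Sg$ connects $1$ to $g$; you instead enumerate pairs $p,q$ of products of generators of $P=V(\Sg)$ and test $[p]_G=g[q]_G$, which is a slightly more concrete and arguably cleaner search when it applies. The one caveat is generality: the lemma is stated for arbitrary finitely generated $E$-unitary inverse monoids, whereas the prefix monoid is defined via relator prefixes only for \emph{special} presentations, and your step (3) needs not merely that $[p]_G\in P$ but that the specific word $p$ labels a path from $1$ inside $\Sg$ --- which holds when $p$ is a product of relator prefixes in the special case, but has no analogue for a general presentation. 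The fix is to enumerate instead all words $p$ with $[pp^{-1}]_S=1$ (equivalently, the words labelling paths from $1$ in $\Sg$, since every prefix of a right-unit word represents a right unit); this set is recursively enumerable once $S$ is recursively presented. That effectiveness assumption is implicit in the paper's own proof as well (Stephen's procedure also requires an effective presentation), so this is a presentational gap rather than a substantive one.
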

\begin{proof}
As in the proof of Lemma~\ref{lem:sprawling}, for each $g \in G$ if we choose $w_g$ to label a path from $1$ to $g$ in the (necessarily connected) graph  $\Sg \cup g\Sg$, then $[w_g]_S$ is the greatest element of $\sigma^{-1}(g)$. So
it suffices to show that we can find such a path. If $G$ has a decidable word problem, then in particular the ball of each radius $r$ in the
Cayley graph of $G$ is effectively constructible, and for any $g$, approximating $\Sg \cup g\Sg$ by Stephen's procedure in larger and larger portions of the group Cayley graph will eventually find some choice of $w_g$. The fact that $S$ has
recursively bounded group distortion then follows from Proposition~\ref{prop:effectivefinverse}.
\end{proof}

This leads immediately to an effective version of Proposition~\ref{prop:Finverse-suff2}:
\begin{Prop}
	\label{prop:Finverse-suffeffective}
Let $P$ be the prefix monoid associated to the $E$-unitary special inverse monoid $S=\Inv\langle X \mid w_i=1\ (i \in I) \rangle$. Suppose the maximal group image $G$ of $S$ has decidable word problem. If $P$ is right reversible, or
if $G$ is the group of right quotients of $P$, then $S$ is effectively $F$-inverse and has recursively bounded group distortion.
\end{Prop}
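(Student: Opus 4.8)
The plan is to follow the proof of Proposition~\ref{prop:Finverse-suff2}, upgrading each step to an effective one using the hypothesis that $G$ has decidable word problem. Since a cancellative monoid admitting a group of right quotients is in particular right reversible, it suffices to treat the case where $P$ is right reversible. As in that proof, write $Z$ for the set of generators occurring in the relations, set $Y=X\setminus Z$, and decompose $S=\FIM(Y)*T$ with $T=\Inv\langle Z\mid w_i=1\ (i\in I)\rangle$; correspondingly $G=\FG(Y)*H$ with $H=\Gp\langle Z\mid w_i=1\ (i\in I)\rangle$. Here $T$ is again a finitely generated $E$-unitary special inverse monoid --- it is $E$-unitary as already used in the proof of Proposition~\ref{prop:Finverse-suff2}, since $T$ embeds as a free factor of the $E$-unitary monoid $S$ and hence any $t\in T$ with $\sigma_T(t)=1$ satisfies $\sigma_S(t)=1$, so $t\in E(S)\cap T=E(T)$ --- whose prefix monoid is the right-reversible monoid $P$, and every generator of $Z$ occurs in a relation.

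First I would show that $\FIM(Y)$ and $T$ are both effectively $F$-inverse. Since $H$ embeds as a free factor of $G$, a word over $Z$ is trivial in $H$ if and only if it is trivial in $G$, so $H$ has decidable word problem. By Proposition~\ref{prop:Finverse-suff} the Sch\"utzenberger graph of $1$ in $T$ is sprawling, so $T$ is effectively $F$-inverse by Lemma~\ref{lem:sprawlingeffective}. For $\FIM(Y)$, the maximum element of the $\sigma$-class of a word $w$ is the one whose Munn tree is the geodesic from $1$ to $[w]$ in the Cayley tree of $\FG(Y)$, equivalently the element represented by the freely reduced form of $w$ (whose Munn tree is contained in that of any representative); computing maximum representatives thus reduces to free reduction, so $\FIM(Y)$ is effectively $F$-inverse.

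I would then assemble an algorithm computing maximum representatives in $S=\FIM(Y)*T$, using the description of greatest elements recorded in the proof of Proposition~\ref{prop:Finverse-suff2} (a consequence of Lemma~\ref{lem:free-product}): if $g\in G$ has free-product normal form $g=g_1g_2\cdots g_n$ as an alternating product of nontrivial elements of $\FG(Y)$ and $H$, and $u_i$ is a representative of the maximum of the $\sigma$-class $g_i$ in $\FIM(Y)$ or $T$ accordingly, then $u_1u_2\cdots u_n$ represents the maximum of the $\sigma$-class $g$ of $S$. So on input a word $w$ over $X$ one computes the free-product normal form of $[w]_G$ in $\FG(Y)*H$ --- possible because one may iteratively merge consecutive blocks from the same factor and delete any block trivial in its factor, deciding triviality by free reduction in $\FG(Y)$ and via the word problem of $H$ --- then replaces each factor $g_i$ by a computed representative $u_i$ of its maximum (using effective $F$-inversity of the relevant factor) and concatenates. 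Thus $S$ is effectively $F$-inverse, and Proposition~\ref{prop:effectivefinverse}, which applies as $G$ has decidable word problem, then gives that $S$ has recursively bounded group distortion.

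The only point requiring real care is the computation of the free-product normal form of $[w]_G$ from a decision procedure for the word problem of $G$: one must verify that the merge-and-delete procedure terminates and returns a genuine normal form, using that triviality of a block is decidable in $\FG(Y)$ by free reduction and in $H$ via the word problem of $H$ (itself reduced to that of $G$). Every other step is a routine effectivisation of the ingredients already proved.
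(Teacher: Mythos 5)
Your proposal is correct and follows essentially the same route as the paper's proof: reduce to the right reversible case, split $S$ as $\FIM(Y)*T$, get effective $F$-inversity of $T$ from Lemma~\ref{lem:sprawlingeffective} (using that $H$ inherits a decidable word problem from $G$) and of $\FIM(Y)$ from free reduction, assemble maximum representatives blockwise via the free-product description from Lemma~\ref{lem:free-product}, and conclude with Proposition~\ref{prop:effectivefinverse}. Your extra care in effectively computing the alternating normal form of $[w]_G$ by merging and deleting trivial blocks is a welcome explicit treatment of a step the paper's proof leaves implicit.
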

\begin{proof}
If $G$ is the group of right quotients of $P$ then by Proposition~\ref{prop:Finverse-suff} $P$ is right reversible,
so it suffices to consider the latter case.

As in the proof of Proposition~\ref{prop:Finverse-suff2}, let $Z$ be the subset of generators occurring in the relations and
put $Y=X \setminus Z$ so that $S$ splits as a free product the $F$-inverse monoids $\FIM(Y)$ and $T=\Inv\langle Z \mid w_i=1\ (i \in I)\rangle$. The monoid $T$ satisfies the conditions of Proposition~\ref{prop:Finverse-suff}, and therefore is
$F$-inverse with sprawling Sch\"utzenberger graph of $1$. The maximal group image $H$ of $T$ is a subgroup of $G$, and therefore also has decidable word problem. Hence by Lemma~\ref{lem:sprawlingeffective}, $T$ is effectively $F$-inverse. 

The maximal group image $G$ of $S$ is the free product of $\FG(Y) $ and $H=\Gp\langle Z \mid w_i=1\ (i \in I) \rangle$. By Lemma \ref{lem:free-product} and the observation thereafter, given $g \in G$ written as an alternating product $g_1 \cdots g_n$ of non-identity elements of $\FG(Y)$ and $H$, the greatest element in the $\sigma$-class $g$ of $S$ is $g_1^\m \cdots g_n^\m$, where $g_i^\m$ is the greatest element in the $\sigma$-class $g_i$ of $\FIM(Y)$ or $T$. In particular, if $u_i$ is a representative of $g_i^\m$, then $u_1 \ldots u_n$ is a representative of the greatest element in $g$. 

Let $w \in (X \cup X^{-1})^\ast$, and write $w$ as $w_1 \ldots w_n$ where each factor $w_i$ belongs either to $(Y \cup Y^{-1})^+$ or $(Z \cup Z^{-1})^+$, and represents a non-identity element in $\FG(Y)$, respectively $H$. If $w_i \in (Y \cup Y^{-1})^+$, then a representative $u_i$ of $[w_i]_{\FIM(Y)}^\m$ is the freely reduced form of $w_i$, which is effectively computable. If $w_i \in (Z \cup Z^{-1})^+$, then a representative $u_i$ of $[w_i]_{T}^\m$ is computable as $T$ is effectively $F$-inverse. This yields a computable representative $u=u_1 \ldots u_n$ of $[w]^\m_S$, and so $S$ is effectively $F$-inverse. By Proposition~\ref{prop:effectivefinverse}, $S$ also has recursively bounded group distortion.
\end{proof}

We do not know if the hypotheses of Lemma~\ref{lem:sprawlingeffective} or Proposition~\ref{prop:Finverse-suffeffective} yield a decidable word problem in $S$ -- we pose this as Question \ref{Q:solvable-if-P-right-rev} below.

For the following result we are grateful to John Meakin who (in a private communication) established a special case of it and in doing so provided the idea of the proof for the general case we state here.
\begin{Thm}
Let $S$ be a finitely generated $E$-unitary special inverse monoid with recursively bounded group distortion and such that the maximal group image has decidable membership problem. Then the following four decision problems are all Turing equivalent:
\begin{enumerate}
\item \label{item:recgroupdist-solvablewp} the word problem for the monoid $S$;
\item \label{item:recgroupdist-decidableif1} the problem of deciding whether a given word represents $1$ in the monoid $S$;
\item \label{item:recgroupdist-decidableifrightunit} the problem of deciding whether a given word represents an element of the $\mathcal{R}$-class of $1$ in the monoid $S$; and
\item \label{item:recgroupdist-prefixmembership} the membership problem for the prefix monoid of $S$ in the maximal group image.
\end{enumerate}
\end{Thm}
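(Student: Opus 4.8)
I would prove the four problems are Turing equivalent by establishing a cycle of reductions: (\ref{item:recgroupdist-solvablewp}) $\Rightarrow$ (\ref{item:recgroupdist-decidableif1}) $\Rightarrow$ (\ref{item:recgroupdist-decidableifrightunit}) $\Rightarrow$ (\ref{item:recgroupdist-prefixmembership}) $\Rightarrow$ (\ref{item:recgroupdist-solvablewp}), together with the already-known or easy implications. The first implication is trivial: if we can decide equality of arbitrary words, we can in particular decide whether $w = 1$. For (\ref{item:recgroupdist-decidableif1}) $\Rightarrow$ (\ref{item:recgroupdist-decidableifrightunit}): a word $w$ represents an element of the $\mathcal{R}$-class of $1$ exactly when $[ww^{-1}]_S = 1$, i.e.\ when the word $ww^{-1}$ represents $1$ in $S$; so an oracle for problem (\ref{item:recgroupdist-decidableif1}) immediately answers problem (\ref{item:recgroupdist-decidableifrightunit}).

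**From right-units to prefix membership.** For (\ref{item:recgroupdist-decidableifrightunit}) $\Rightarrow$ (\ref{item:recgroupdist-prefixmembership}), I would use the fact that in an $E$-unitary special inverse monoid the prefix monoid $P$ is precisely the set of vertices of the Sch\"utzenberger graph of $1$ embedded in $\Gamma(G)$, and that $g \in P$ iff $g$ is a vertex of $S\Gamma(1)$, iff some (equivalently, every reduced) word $v$ with $[v]_G = g$ labels a path from $1$ into $S\Gamma(1)$. The key algebraic translation is: $g \in P$ iff there is a word $v$ with $[v]_G = g$ such that $[v]_S$ lies in the $\mathcal{R}$-class of $1$ --- indeed, the elements of $S$ whose image is a vertex of $S\Gamma(1)$ and which are "reachable from $1$" are exactly the right units, since $s \mathrel{\mathcal{R}} 1$ means $sS = S$, i.e.\ $s$ is a right unit. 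So to decide whether $g = [v]_G \in P$, run the oracle for problem (\ref{item:recgroupdist-decidableifrightunit}) on $v$. Here the hypothesis that $G$ has decidable \emph{membership} problem (for $P$ --- wait, one should read this as decidable word problem; I will return to this) is what lets us pass between words and group elements effectively.

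**The crucial implication: prefix membership $\Rightarrow$ word problem.** This is where recursively bounded group distortion does the work, and I expect it to be the main obstacle. The strategy is the one of Ivanov--Margolis--Meakin \cite{IMM}: to decide whether $[u]_S = [w]_S$ it suffices (since $S$ is $E$-unitary) first to check $[u]_G = [w]_G$ using the word problem in $G$ (which follows from decidable membership of $P$ plus, one checks, decidability of the word problem, or is assumed), and then to check that $u$ and $w$ label the same paths-up-to-mutual-domination, i.e.\ that $S\Gamma(u) = S\Gamma(w)$. Because $S$ is special, $S\Gamma(u)$ is obtained from the line of $u$ by attaching a copy of $S\Gamma(1)$ at every vertex and determinising, so $S\Gamma(u)$ is the subgraph of $\Gamma(G)$ on $\bigcup_i k_i P$ where the $k_i$ are the vertices of the path $1 \xrightarrow{u} [u]_G$; deciding $[w]_S \le [u]_S$ reduces via Lemma~\ref{lem:wlequ} to checking, for each vertex $h$ on the $w$-path, whether $h \in k_i P$ for some $i$, i.e.\ finitely many instances of prefix-membership (using $h k^{-1} \in P$ for the relevant group elements). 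The subtlety is that to confirm $[u]_S = [w]_S$ one must also bound the \emph{search}: a priori $S\Gamma(u)$ could contain vertices far from the line of $u$. This is exactly what recursive bounded distortion controls: if $[u]_S = [w]_S$ then $l_S([u]_S) \le \phi(l_G([u]_G))$ with $\phi$ recursive, so every vertex of $S\Gamma(u)$ lies within group-distance $\phi(l_G([u]_G))$ of the identity, giving an effective bound on the finite portion of $\Gamma(G)$ that must be inspected; one enumerates all candidate relator-attachment sites within that ball, builds the relevant finite piece of $S\Gamma(u)$ and of $S\Gamma(w)$ using the prefix-membership oracle to test containments, and compares. The hard part is assembling this bounded search correctly --- in particular arguing that the $\phi$-ball suffices not merely to contain $S\Gamma(u)$ but to \emph{certify} that nothing outside it belongs --- and confirming that all the containment tests $h \in k_i P$ genuinely reduce to the prefix-membership oracle after translating through $G$'s (decidable) word problem.

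Finally, to close the loop and also capture the "already known" direction, I would cite \cite{IMM}: decidable word problem in $G$ together with decidable prefix membership gives decidable word problem in $S$, which is implication (\ref{item:recgroupdist-prefixmembership}) $\Rightarrow$ (\ref{item:recgroupdist-solvablewp}) once one observes that decidable membership of $P$ in $G$ entails decidability of the word problem in $G$ (testing $g = 1$ is a special case, as $1 \in P$, but one needs a little more --- $g=1$ iff $g \in P$ and $g^{-1} \in P$ and these witness a reduced word of length $0$; alternatively the hypothesis should be read as including decidable word problem in $G$, which the IMM framework requires anyway). This gives the full cycle and hence Turing equivalence of all four problems.
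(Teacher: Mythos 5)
Your cycle has the right overall shape, and the reductions (\ref{item:recgroupdist-solvablewp})~$\Rightarrow$~(\ref{item:recgroupdist-decidableif1})~$\Rightarrow$~(\ref{item:recgroupdist-decidableifrightunit}) and (\ref{item:recgroupdist-prefixmembership})~$\Rightarrow$~(\ref{item:recgroupdist-solvablewp}) (the last just by citing \cite{IMM}) match the paper. But the step (\ref{item:recgroupdist-decidableifrightunit})~$\Rightarrow$~(\ref{item:recgroupdist-prefixmembership}) as you have written it has a genuine gap. You correctly state that $g\in P$ iff \emph{there exists} a word $v$ with $[v]_G=g$ and $[v]_S\mathrel{\mathcal{R}}1$, but you then conclude ``run the oracle on $v$'', where $v$ is the input word. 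That fails: the given word need not be a witness even when $g\in P$. For example in $\Inv\langle a\mid aa^{-1}=1\rangle$ the word $w=a^{-1}a^2$ satisfies $[w]_G=[a]_G\in P$, yet $[w]_S$ is not a right unit ($[ww^{-1}]_S=[a^{-1}a]_S\neq 1$); the witness is the \emph{other} representative $a$. So you must search over all representatives of $g$, and that search is a priori unbounded. This is exactly where the hypothesis of recursively bounded group distortion enters (and it is the only place it is needed): since $S$ is $E$-unitary, a witness, if it exists, is the unique element of the $\mathcal{R}$-class of $1$ mapping to $g$, and the distortion bound gives a computable bound $\phi(l_G(g))$ on its length; one then enumerates all words up to that length and tests each with the word problem in $G$ and the oracle for (\ref{item:recgroupdist-decidableifrightunit}). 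You never invoke distortion in this step, so as written the reduction does not go through.

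Relatedly, you have misallocated where the distortion hypothesis does its work. You place it in ``prefix membership $\Rightarrow$ word problem'' and worry about bounding a search for far-away vertices of $S\Gamma(u)$; but no such search is needed there. For a special monoid, $[w]_S\geq[u]_S$ reduces to finitely many queries of the form ``is $k_i^{-1}h\in P$?'' where $h$ ranges over the vertices of the $w$-path and $k_i$ over those of the $u$-path --- one never has to construct $S\Gamma(u)$, only to query membership of specific vertices in it. This is precisely the reduction in \cite{IMM}, which needs no distortion; your fallback of simply citing \cite{IMM} is the correct move. Once you relocate the distortion argument to the (\ref{item:recgroupdist-decidableifrightunit})~$\Rightarrow$~(\ref{item:recgroupdist-prefixmembership}) step as above, the proof is complete and coincides with the paper's.
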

\begin{proof}
Clearly given an oracle to decide the word problem in $S$ one can check if a given word represents $1$ in $S$, so (\ref{item:recgroupdist-decidableif1}) is trivially reducible to (\ref{item:recgroupdist-solvablewp}). For any element $s \in S$ we have $s \rrel 1$ if and only if $ss^{-1} = 1$; so if given an oracle for (\ref{item:recgroupdist-decidableif1}) we can check if a given word $w$ represents an element of the $\mathcal{R}$-class of $1$ by checking if the word $w w^{-1}$ represents $1$. The fact that (\ref{item:recgroupdist-solvablewp}) reduces to (\ref{item:recgroupdist-prefixmembership}) is established in the proof\footnote{The statement
of \cite[Theorem 3.2]{IMM} discusses only the decidable case, but the proof exhibits a Turing reduction.} of \cite[Theorem 3.2]{IMM}. What remains is to show that (\ref{item:recgroupdist-prefixmembership}) reduces to (\ref{item:recgroupdist-decidableifrightunit}).

Suppose, then, that we are given an oracle to decide membership of the $\mathcal{R}$-class of $1$, and that we are given a word $w$. Let $G$ be the maximal group image of $S$ and $g$ be the element of $G$ represented by $w$. Since the prefix monoid in $G$ is the image under the natural morphism of the $\mathcal{R}$-class of $1$ in $S$, the element $g$ lies in the prefix monoid if and only if it is the image under the natural morphism of some (necessarily unique, since $S$ is $E$-unitary) element of the $\mathcal{R}$-class of $1$. Since the distortion of the morphism is computable, the length of any such element is bounded above by a computable function of the length of $g$, and hence by a computable function of the length of
$w$. It clearly suffices to compute such a bound, enumerate all words up to that length, and for each word check if
the word represents $g$ in $G$ (which we can do since $G$ has decidable word problem) and if it represents an
element of the $\mathcal{R}$-class of $1$ in $S$ (which we can do using the oracle for (\ref{item:recgroupdist-decidableifrightunit})).
\end{proof}

One might hope that in the realm of $E$-unitary finitely presented special inverse monoids, recursively bounded group distortion together with a solvable word problem in the maximal group image yields a solvable word problem. This is not true: in fact we shall show that counterexamples include the one-relator inverse monoid with undecidable word problem given in \cite[Theorem 3.9]{Bob}, and the class of $E$-unitary inverse semigroups with hyperbolic Sch\"utzenberger graphs and undecidable word problem constructed in \cite[Section 5]{GSSz}.

All of these constructions have the following form: let $X=\{x_1, \ldots, x_n\}$, and let $r_1, \ldots, r_m,$ $s_1, \ldots, s_k$ be words in $(X \cup X^{-1})^*$. Let $t$ be a disjoint letter from $X$, and define $e$ as the word
$$e=\prod_{i=1}^n x_i x_i^{-1} \prod_{i=1}^m  (ts_it^{-1})(ts_it^{-1})^{-1} \prod_{i=1}^n x_i^{-1} x_i,$$
and put
$$S=\Inv \langle X, t \mid er_1=1, r_2=1, \ldots, r_m=1 \rangle.$$

It was shown in \cite[Theorem 3.8]{Bob} that $S$ is $E$-unitary with maximal group image $G \ast \FG(t)$ where $G=\Gp\langle X \mid r_1=1, \ldots, r_m=1 \rangle$, and that if $S$ has solvable word problem then the membership problem of the submonoid generated by $s_1, \ldots, s_k$ in $G$ is decidable. This is used in \cite{Bob} and \cite{GSSz} to construct $E$-unitary special inverse monoids with unsolvable word problem, from groups with undecidable submonoid membership problem.

Let $\Gamma$ be the Cayley graph of $G \ast \FG(t)$ with respect to the generating set $X \cup \{t\}$, and consider the Sch\"utzenberger graphs of $S$ with respect to the same generating set. Then the following holds.

\begin{Prop}
	\label{prop:sigmaiso}
The inverse monoid $S$ as defined above is $F$-inverse, and has the property that every Sch\"utzenberger graph isometrically embeds into $\Gamma$ via $\sigma$ (so in particular, has recursively bounded group distortion).
\end{Prop}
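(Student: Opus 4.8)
The plan is to pin down the exact shape of the embedded Schützenberger graphs of $S$ inside $\Gamma$, deduce the isometry claim from a convexity argument, and then obtain the $F$-inverse property from Theorem~\ref{specEunit}. First I would record the coarse geometry of $\Gamma$. Since $G\ast\FG(t)$ is a free product, its Cayley graph $\Gamma$ (with respect to $X\cup\{t\}$) is a \emph{tree of copies of the Cayley graph $\operatorname{Cay}(G,X)$ of $G$}: collapsing each left coset $hG$ to a point produces a tree --- this uses uniqueness of normal forms in a free product --- and two adjacent cosets are joined by exactly one $t$-edge, the subgraph of $\Gamma$ induced on any coset $hG$ being a full copy of $\operatorname{Cay}(G,X)$. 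I will call this the coset tree. The key elementary observation is: \emph{if $\Omega\subseteq\Gamma$ is a connected subgraph which is a union of entire cosets $hG$, then $\Omega$ is isometrically embedded in $\Gamma$}. Indeed the cosets met by $\Omega$, with the $t$-edges of $\Omega$ between them, form a connected subgraph of the coset tree, hence a subtree (in particular the $t$-edge joining any two cosets $\Omega$ meets lies in $\Omega$, by connectedness of $\Omega$ together with the tree structure); given vertices $p,q$ of $\Omega$, the coset-geodesic of $\Gamma$ between their cosets lies entirely in this subtree (subtrees are geodesically closed in trees), each coset on it is a full copy of $\operatorname{Cay}(G,X)$ contained in $\Omega$, and consecutive cosets are joined by the (unique, hence present) $t$-edges; concatenating $\operatorname{Cay}(G,X)$-geodesics inside the cosets with these $t$-edges yields a path in $\Omega$ of length $d_\Gamma(p,q)$, by the usual distance formula in a tree of geodesic spaces glued along single edges. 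Since $\Omega\subseteq\Gamma$, the reverse inequality is automatic, so $\Omega\hookrightarrow\Gamma$ is isometric.

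Next I would show that \emph{every} embedded $S\Gamma(w)$ is of this form. By Lemma~\ref{eunitaryschutz}, $S\Gamma(w)$ is the smallest subgraph $\Omega$ of $\Gamma$ containing the $w$-labelled path based at the identity and closed under the relator conditions; applying the condition coming from the relation $er_1=1$ with $p=q$ shows that every vertex of $\Omega$ carries a loop labelled $er_1$. Reading such a loop based at a vertex $v$ inside the deterministic graph $\Gamma$ passes in particular along the sub-paths labelled $x_ix_i^{-1}$ and $x_i^{-1}x_i$ occurring at the start and the end of $e$; hence the edges $v\xrightarrow{x_i}vx_i$ and $v\xrightarrow{x_i^{-1}}vx_i^{-1}$ lie in $\Omega$ for every vertex $v\in\Omega$ and every $x_i\in X$. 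As $\operatorname{Cay}(G,X)$ is connected, this forces every coset that $S\Gamma(w)$ meets to be fully contained in it, with all of its $X$-edges; and $S\Gamma(w)$ is connected, so by the observation above $\sigma$ embeds $S\Gamma(w)$ isometrically into $\Gamma$.

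It remains to deduce the $F$-inverse property. Isometric embedding of every Schützenberger graph says exactly that $d_S(s,t)=d_{G\ast\FG(t)}(s\sigma,t\sigma)$ for all $\mathcal R$-related $s,t\in S$ with respect to the word metrics (equivalently $l_S(s)=l_G(s\sigma)$ for the associated length functions); in particular $S$ has uniformly --- indeed recursively, since the bounding function $\phi$ may be taken to be the identity --- bounded group distortion. As $S$ is a finitely generated (hence quasi-countable) $E$-unitary special inverse monoid, Theorem~\ref{specEunit} now gives that $S$ is $F$-inverse.

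The main obstacle is the structural claim that each $S\Gamma(w)$ is a union of entire $G$-cosets of $\Gamma$; once this is in hand the convexity/isometry argument is routine and $F$-invertibility is immediate from Theorem~\ref{specEunit}. Two points deserve care: that $\Gamma$ must be treated as a tree of copies of $\operatorname{Cay}(G,X)$ rather than a tree of points, so that ``convexity'' is verified coset by coset and a distance formula for trees of spaces is invoked; and that the $t$-edges joining consecutive cosets that $S\Gamma(w)$ meets genuinely lie in $S\Gamma(w)$, which follows from its connectedness together with the tree structure of the coset graph.
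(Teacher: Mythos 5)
Your proof is correct, and it reaches the same conclusion by the same two underlying facts as the paper, but organises the second of them differently. Both arguments begin by observing that $P$-completeness with respect to the relation $er_1=1$ forces every vertex of an embedded Sch\"utzenberger graph $\Delta$ to carry all of its incident $X^{\pm1}$-edges (hence, in your language, $\Delta$ is a connected union of full $G$-cosets). Where the paper then works directly with free product normal forms --- taking the label $w=t^{k_1}w_1t^{k_2}\cdots$ of a $\Gamma$-geodesic between two vertices of $\Delta$, taking the label $w'$ of an arbitrary simple path in $\Delta$ between the same vertices, and matching syllable decompositions to conclude that the intermediate vertices $\beta_i$ already lie in $\Delta$ --- you instead isolate a reusable convexity lemma: a connected union of full cosets in the tree of copies of $\operatorname{Cay}(G,X)$ is isometrically embedded. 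The content is essentially equivalent (the ``usual distance formula for a tree of geodesic spaces glued along single edges'' that you invoke is proved by exactly the normal-form/excision reasoning the paper carries out inline), but your packaging cleanly separates the general Bass--Serre geometry of $\Gamma$ from the specific features of the presentation, at the price of leaving the lower bound of the distance formula --- that excursions out of a coset return through the same $t$-edge, so cosets are themselves isometric in $\Gamma$ and every $p\to q$ path crosses each $t$-edge on the tree-geodesic --- as a cited standard fact rather than an argument. Your final deduction of the $F$-inverse property from the isometry via Theorem~\ref{specEunit} is the intended one (and is in fact a more precise citation than the paper's own appeal to Theorem~\ref{thm:uniformcoarse}).
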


\begin{proof}
Take any Sch\"utzenberger graph of $S$, and denote its embedded copy in $\Gamma$ by $\Delta$.	
	
First, observe that since $\Delta$ is $P$-complete, one can read the word $e$, and thus the letters $x_i$ and $x_i^{-1}$ ($i=1, \ldots,n$) from any vertex in $\Delta$. In particular, if $w \in (X \cup X^{-1})^\ast$, one can read $w$ in $\Delta$ from any vertex. 

Let $u, v \in V(\Delta)$, and suppose that $w \in (X \cup X^{-1} \cup \{t\} \cup \{t^{-1}\})^*$ labels a $u \to v$ geodesic path in $\Gamma$. We will show that this path is also contained in $\Delta$.
Write $$w=t^{k_1}w_1t^{k_2}\ldots t^{k_l}w_l t^{k_{l+1}},$$ 
where $w_i \in (X \cup X^{-1})^+$, $k_i \in \mathbb Z$ and $k_1, \ldots, k_l \neq 0$. Observe that since $w$ is assumed to label a geodesic path, $[w_i]_G \neq 1$ for any $i$.

Since $\Delta$ is connected, there exists a $u \to v$ path in $\Delta$, with label  $w' \in (X \cup X^{-1} \cup \{t\} \cup \{t^{-1}\})^*$. Then $[w']_{G \ast \FG(t)}=[w]_{G \ast \FG(t)}$. We can without loss of generality assume that the $w'$ labels a simple path, i.e. $w'$ contains no subword with value $1$ in $G \ast \FG(t)$. Put
$$w'=t^{k'_1}w'_1t^{k'_2}\ldots t^{k'_l}w'_j t^{k'_{j+1}}$$
where $w'_i \in (X \cup X^{-1})^+$, $k'_i \in \mathbb Z$ and $k'_1, \ldots, k'_l \neq 0$. Then since $w$ and $w'$ have the same value in $G \ast \FG(t)$, and $[w'_i]_G \neq 1$ for any $i$, we must have $l=j$, $k_i=k'_i$ and $[w_i]_G=[w'_i]_G$ for all $i$. 
Let $\alpha_0, \ldots, \alpha_l$ and $\beta_{1}, \ldots, \beta_{l+1}$ be the vertices in $\Gamma$ traversed by the path $u \xrightarrow{w} v$ at the following points:
$$u=\alpha_0 \xrightarrow{t^{k_1}} \beta_1 \xrightarrow{w_1} \alpha_1 \xrightarrow{t^{k_2}} \beta_2 \xrightarrow{w_2} \cdots \xrightarrow{t^{k_l}} \beta_{l} \xrightarrow{w_l} \alpha_l  \xrightarrow{t^{k_{l+1}}} \beta_{l+1}=v.$$
It follows that the path $u \xrightarrow{w'} v$ also traverses the same vertices, factorizing as
$$u=\alpha_0 \xrightarrow{t^{k_1}} \beta_1 \xrightarrow{w'_1} \alpha_1 \xrightarrow{t^{k_2}} \beta_2 \xrightarrow{w'_2} \cdots \xrightarrow{t^{k_l}} \beta_{l} \xrightarrow{w'_l} \alpha_l  \xrightarrow{t^{k_{l+1}}} \beta_{l+1}=v.$$
Since $w'$ lies in $\Delta$, in particular we have $\beta_i \in V(\Delta)$ for all $i$, and thus the paths $\beta_i \xrightarrow{w_i} \alpha_{i}$ must all lie in $\Delta$, which shows that $w$ is contained in $\Delta$ indeed.

This proves that $\sigma$ is an isometric embedding with respect to the word metrics in $S$ and $G$, so $\phi$, as defined in Definition \ref{def:recursive-group-dist} can be chosen to be the identity map, which is computable. Since $S$ is $E$-unitary and special, by Theorem \ref{thm:uniformcoarse}, it also must be $F$-inverse.
\end{proof}

We have the following theorem as a corollary of the above and \cite[Theorem 3.9]{Bob}.

\begin{Thm}
	\label{BobsFinv}
There exists a one-relator special inverse monoid which is $F$-inverse, has undecidable word problem, has maximal group image with decidable word problem, and is such that $\sigma$ isometrically embeds each $\rrel$-class into the maximal group image.
\end{Thm}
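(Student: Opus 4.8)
The plan is to obtain Theorem~\ref{BobsFinv} as a direct corollary of Proposition~\ref{prop:sigmaiso} applied to a specific one-relator special inverse monoid arising in Gray's work \cite{Bob}. The monoids $S=\Inv\langle X,t\mid er_1=1, r_2=1,\ldots,r_m=1\rangle$ considered just before Proposition~\ref{prop:sigmaiso} are, in general, multi-relator; the key point is that \cite[Theorem 3.9]{Bob} produces an instance of this construction which is in fact \emph{one-relator} (i.e.\ with $m=1$, so the only defining relation is $er_1=1$), has undecidable word problem, and whose maximal group image $G\ast\FG(t)$ has decidable word problem (this being arranged by choosing $G$ to be a group with decidable word problem but undecidable submonoid membership problem for the chosen $s_1,\ldots,s_k$).

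First I would recall the exact statement of \cite[Theorem 3.9]{Bob}: it asserts the existence of such a one-relator special inverse monoid $M=\Inv\langle X,t\mid er_1=1\rangle$ with undecidable word problem, where $e$ is built from the $s_i$ as in the displayed formula, whose maximal group image $G\ast\FG(t)$ has decidable word problem. This monoid $M$ is precisely of the form covered by Proposition~\ref{prop:sigmaiso} (taking $m=1$, so that there are no relations $r_2=1,\ldots,r_m=1$), because that proposition is stated for the whole family of such monoids regardless of the number of relators.

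Then I would simply invoke Proposition~\ref{prop:sigmaiso}: it tells us that $M$ is $F$-inverse and that every Sch\"utzenberger graph of $M$ embeds isometrically into the Cayley graph $\Gamma$ of $G\ast\FG(t)$ via $\sigma$. Since isometric embedding of Sch\"utzenberger graphs is exactly the statement that $\sigma$ is an isometric embedding on each $\rrel$-class, and since each $\rrel$-class of an $E$-unitary inverse monoid coincides (as a metric space under the word metric) with its Sch\"utzenberger graph, this gives the last clause. Combining this with the properties of $M$ already recorded from \cite[Theorem 3.9]{Bob} --- one-relator, special, undecidable word problem, decidable word problem in the maximal group image --- yields all five required properties at once.

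The only genuinely nontrivial input is Gray's construction and the verification that the particular monoid it yields falls into the family to which Proposition~\ref{prop:sigmaiso} applies; everything on our side is bookkeeping, so there is essentially no obstacle beyond correctly quoting \cite{Bob}. I would present the proof in one short paragraph: take $M$ to be the one-relator special inverse monoid of \cite[Theorem 3.9]{Bob}, observe it has the form $\Inv\langle X,t\mid er_1=1\rangle$ treated in Proposition~\ref{prop:sigmaiso}, apply that proposition to conclude $M$ is $F$-inverse with $\sigma$ isometric on $\rrel$-classes (hence recursively --- indeed trivially --- bounded group distortion), and note that undecidability of the word problem of $M$ together with decidability of the word problem of its maximal group image are part of the statement of \cite[Theorem 3.9]{Bob}.

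\begin{proof}
Let $M$ be the one-relator special inverse monoid constructed in \cite[Theorem 3.9]{Bob}. By construction $M$ has the form
$$M = \Inv\langle X, t \mid er_1 = 1 \rangle$$
treated in Proposition~\ref{prop:sigmaiso} (with a single defining relation, that is, with $m=1$ in the notation preceding that proposition), its maximal group image is $G \ast \FG(t)$ where $G = \Gp\langle X \mid r_1 = 1\rangle$ has decidable word problem, and $M$ has undecidable word problem. Since $G \ast \FG(t)$ is a free product of a group with decidable word problem and a free group, it also has decidable word problem. By Proposition~\ref{prop:sigmaiso}, $M$ is $F$-inverse and every Sch\"utzenberger graph of $M$ embeds isometrically, via $\sigma$, into the Cayley graph of $G \ast \FG(t)$; equivalently, $\sigma$ restricts to an isometric embedding of each $\rrel$-class of $M$ into the maximal group image. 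This establishes all the required properties.
\end{proof}
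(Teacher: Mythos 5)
Your proposal is correct and follows essentially the same route as the paper, which likewise obtains the theorem as an immediate corollary of Proposition~\ref{prop:sigmaiso} applied to the one-relator monoid $\Inv\langle X,t\mid er_1=1\rangle$ of \cite[Theorem 3.9]{Bob} (the case $m=1$ of the construction preceding that proposition), with undecidability of the word problem and decidability in the maximal group image $G\ast\FG(t)$ supplied by Gray's result. The only addition you make beyond the paper's one-line deduction is the explicit bookkeeping identifying Sch\"utzenberger graphs with $\rrel$-classes, which is harmless and accurate.
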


We remark that these examples never have right reversible prefix monoids. Indeed, it is not difficult to check that if $g \in G$ is such that $g^{-1}$ is not contained in the subsemigroup generated by $s_1, \ldots, s_k$, and $h \in G\setminus\{1\}$, then the principal right ideals $tgP$ and $htgP$ do not intersect. 

We conclude with some questions which remain open.

\begin{Ques}
	\label{Q:solvable-if-P-right-rev}
If $S$ is an $E$-unitary finitely presented special inverse monoid such that the prefix monoid $P$ of $S$ is right reversible and the word problem is solvable in $S / \sigma$, is the word problem solvable in $S$?
\end{Ques}

By Proposition \ref{prop:Finverse-suff}, if $P$ is right reversible, then $S/\sigma$ splits as a free product of a free group and the group of right quotients of $P$. By \cite{IMM}, $S$ has a solvable word problem if $P$ has a decidable membership problem in $S/\sigma$, which holds if and only if it has decidable membership problem in $PP^{-1}$. Thus a positive answer to Question \ref{Q:solvable-if-P-right-rev} would be implied by a positive answer to the following:

\begin{Ques}
	\label{Q:P-dec-in-PPinv}
	If $G$ is the group of right quotients of a right reversible cancellative monoid $P$ and $G$ has solvable word problem, is the membership problem of $P$ in $G$ decidable?
\end{Ques}

If an $E$-unitary, special inverse monoid is $F$-inverse due to satisfying the conditions of Proposition \ref{prop:Finverse-suff} or \ref{prop:Finverse-suff2} and its maximal group image has a solvable word problem, then it is also effectively $F$-inverse by Proposition~\ref{prop:Finverse-suffeffective}. We ask whether this is true for all (special) $F$-inverse monoids.

\begin{Ques}
	\label{Q:F-invmaxdec}
If $S=\Inv\langle X \mid u_1=v_1, \ldots, u_n=v_n \rangle$ is $F$-inverse, and the word problem in $S/\sigma$ is decidable, is there an algorithm that on input $w \in (X \cup X^{-1})^\ast$ outputs a representative for $[w]_S^\m$? What about if $S$ is special?
\end{Ques}

The examples in Section \ref{sec:computability} do satisfy this property: it follows from the proof of Proposition \ref{prop:sigmaiso} that for any $g \in G \ast \FG(t)$, the greatest element of $S$ in the $\sigma$-class $g$ can be represented by any element of the form $w=t^{k_1}w_1t^{k_2}\ldots t^{k_l}w_l t^{k_{l+1}}$, 
where $[w]_{G \ast \FG(t)}=g$, 
$w_i \in (X \cup X^{-1})^+$, $k_i \in \mathbb Z$, $k_1, \ldots, k_l \neq 0$, and $[w_i]_G \neq 1$ for any $i$, since such a word is always readable from $1$ in a Sch\"utzenberger graph containing $1$ and $g$. If the word problem in $G$ is solvable, than any word in $(X \cup X^{-1} \cup \{t\} \cup \{t^{-1}\})^*$ representing $g$ can be effectively reduced to one of the form of $w$.

Observe that in the special case, a positive answer to Question \ref{Q:F-invmaxdec} would be implied if the conditions in Theorem \ref{thm:wmax} were algorithmically verifiable, that is, if there was an algorithm that on input $g,h \in G$, would eventually verify if no $1 \to g$ path existed in $\Gamma(G)$ avoiding $h\ol\Sg$ (but need not terminate on other inputs).

\end{document}